\newtheorem*{rep@theorem}{\rep@title}
\newcommand{\newreptheorem}[2]{%
\newenvironment{rep#1}[1]{%
 \def\rep@title{#2 \ref{##1}}%
 \begin{rep@theorem}}%
 {\end{rep@theorem}}}
\newtheoremstyle{mystyle}{}{}{\slshape}{2pt}{\scshape}{.}{ }{} 
\newtheorem{theorem}{Theorem}[section]
\newtheorem{cor}[theorem]{Corollary}
\newtheorem{prop}[theorem]{Proposition}
\newtheorem{lemma}[theorem]{Lemma}
\newtheorem{fact}[theorem]{Fact}
\theoremstyle{definition}
\newtheorem{defi}[theorem]{Definition}
\newtheorem{ex}[theorem]{Example}
\theoremstyle{mystyle}
\theoremstyle{remark}
\newtheorem{rem}[theorem]{Remark}
\newtheorem*{claim}{Claim}
\newenvironment{proof of claim}
{\begin{trivlist}  \item \textit{Proof of Claim.}~} {\hfill $\Box$ (Claim)
\end{trivlist}}
\newcommand{\monster}{\mathcal U}
\newcommand{\res}{\mathrm {res}}
\newcommand{\RV}{\mathrm {RV}}
\newcommand{\rv}{\mathrm {rv}}
\newcommand{\trdeg}{\mathrm {trdeg}}
\newcommand{\Aut}{\mathrm {Aut}}
\newcommand{\id}{\mathrm {Id}}
\newcommand{\calL}{\mathcal L}
\newcommand{\Ltilde}{\widetilde{\mathcal L}}
\newcommand{\calS}{\mathcal S}
\newcommand{\Utilde}{\widetilde{\mathcal U}}
\newcommand{\Calg}{C^{\mathrm {alg}}}
\newcommand{\alg}{\mathrm {alg}}
\newcommand{\St}{\mathrm {St}}
\newcommand{\bQ}{\mathbb {Q}}
\DeclareMathOperator{\tp}{tp}
\DeclareMathOperator{\acl}{acl}
\DeclareMathOperator{\dcl}{dcl}
\def\indsym#1#2{%
 \setbox0=\hbox{$\m@th#1x$}%
 \kern\wd0%
 \hbox to 0pt{\hss$\m@th#1\mid$\hbox to 0pt{$\m@th#1^{#2}$\hss}\hss}%
 \lower.9\ht0\hbox to 0pt{\hss$\m@th#1\smile$\hss}%
 \kern\wd0}
\newcommand{\ind}[1][]{\mathop{\mathpalette\indsym{#1}}}
\def\nindsym#1#2{%
 \setbox0=\hbox{$\m@th#1x$}%
 \kern\wd0%
 \hbox to 0pt{\hss$\m@th#1\not$\kern1.4\wd0\hss}
 \hbox to 0pt{\hss$\m@th#1\mid$\hbox to 0pt{$\m@th#1^{#2}$\hss}\hss}%
 \lower.9\ht0\hbox to 0pt{\hss$\m@th#1\smile$\hss}%
 \kern\wd0}
\newcommand{\aclind}{\ind^{\alg}}
\title{Residue field domination in some henselian valued fields}
\author{C. Ealy, D. Haskell,  and P. Simon}
\begin{document}
\maketitle

\begin{abstract}
We generalize previous results about stable domination and residue field domination to henselian valued fields of equicharacteristic 0 with bounded Galois group, and we provide an alternate characterization of stable domination in algebraically closed valued fields for types over parameters in the field sort.
\end{abstract}

\section{Introduction}
 
The notion of domination of a type by its stable part was introduced and studied in the book \cite{HHM2} and examined especially in the case of an algebraically closed valued field. The utility of the notion has been further demonstrated; for example, the space of stably dominated types in an algebraically closed valued field was analysed in the book \cite{HL} as an approach to understanding Berkovich spaces, and some structure theory has been developed for groups with a stably dominated generic type \cite{HR-K}. However, the stable part of a structure can seem like an unwieldy and abstract object. Since the stable sorts in an algebraically closed valued field are essentially those which are internal to the residue field, the intuition behind stable domination is that a stably dominated type is controlled by its trace in the residue field. By turning attention to the residue field instead of to the stable part, the hope is that this intuition could be used in two ways. The first is to develop a notion of domination that applies in more general valued fields in which the residue field is not necessarily stable. The second is to find a domination statement involving a simpler collection of sorts. This program was started in \cite{EHM}, where we considered domination by sorts that are internal to the residue field in a  real closed valued field. The present paper continues the project in the greater generality of henselian valued fields of equicharacteristic $0$, provided that the galois group is bounded. Details of the notation are given later; in the theorems quoted below, $\monster$ is a monster model of the theory of valued fields in which we are working.

In our definition of residue field domination, we reduce the collection of sorts that are used for domination to the residue field itself, rather than the sorts that are internal to the residue field. This may seem to be an unreasonably strong property, but we are able to show that it does hold in many cases, either assuming some algebraic conditions, or assuming stable domination, as in the following statements.

\begin{reptheorem}{residue field domination for unramified extensions}
  Let $C\subseteq \monster$ be a subfield and let $a$ be a (possibly infinite) tuple of field elements such that the field generated by $Ca$ is an unramified extension of $C$ with the good separated basis property over $C$, and such that $k(Ca)$ is a regular extension of $k(C)$.  Then $\tp(a/C)$ is residue field dominated.
\end{reptheorem}

\begin{reptheorem}{theorem:domfield}
Let $C\subseteq \monster$ be a subfield, let $a\in \monster$, and let $\widetilde \monster$ be the algebraic closure of $\monster$. Assume that $\tp(a/C)$ is stably dominated in the structure $\widetilde \monster$.  Then in the structure $\monster$, $\tp(a/C^+)$ is residue field dominated, where $C^+ = \acl(C)\cap \dcl(Ca)$.
\end{reptheorem}

There are, however, important examples, when the base of a type is not in the field sort, where stable domination does not reduce to residue field domination.  For instance, a major theme of stable domination is that types (with a few caveats) are always stably dominated over the value group. However, they need not be residue field dominated over the value group. In addition to the residue field, one needs information from sorts that are internal to the residue field. These turn out to be given by fibers of the valuation map in $\RV$.  We thus introduce another notion, $\RV$-domination, and show that types are $\RV$-dominated over their value groups.

\begin{reptheorem}{RVdomination}
Let $L$, $M$ be subfields of $\mathcal{U}$ with $C\subseteq L\cap M$ a valued subfield. Assume that $k(L)$ is a regular extension of $k(C)$, $\Gamma_L \subseteq \Gamma_M$, $\Gamma_L/\Gamma_C$ is torsion-free and that $L$ has the good separated basis property over $C$.  Then $\tp(L/C\Gamma_L)$ is $\RV$-dominated.
\end{reptheorem}
 
An important insight of this paper is that one key step in proving domination results is the existence of a separated basis. This insight allows us to distinguish between purely algebraic concepts and the more model-theoretic ones. In particular, we derive the following algebraic characterization of stable domination for types in the field sort in an algebraically closed valued field.

\begin{reptheorem}{domination-equivalence}
Suppose that $\monster$ is algebraically closed. Let $C \subset \monster$ be a subfield, let $a$ be a tuple of valued field elements, and let $L$ be the definable closure of $Ca$ in the valued field sort.  Assume $L$ is a regular extension of $C$. Then the following are equivalent.
\begin{enumerate}[(i)]
    \item $\tp(a/C)$ is stably dominated.
    \item $L$ has the good separated basis property over $C$ and $L$ is an unramified extension of $C$.
\end{enumerate} 
\end{reptheorem}

When restricted to the main sort, the domination statements can be given a purely valuation-theoretic form, as asserting the existence of automorphisms under certain hypotheses; these are  Proposition~\ref{basicisomorphismtheorem} and Theorem~\ref{dominationovervaluegroup}.

\medskip
In the time since this paper was originally submitted, further work has been done by several authors. We mention in particular the work of Vicaria \cite{V}, which uses, and to some extent generalizes, the results of this paper.  She does not need the hypothesis that the Galois group of the field is bounded. However, she uses a rather different language, with sorts for the cosets of the subgroups of the $n$th powers in $\RV$. Also relevant is the work of Hils, Cubides Kovacsics and Ye \cite{HCY}, which independently obtains type implication results using the existence of a separated basis (there called being vs-defectless).
 
\medskip
The outline of the paper is as follows. In the remainder of the Introduction we state a quantifier elimination result for the theory in which we work,  give the definition of domination and some associated properties, and recall some elementary properties of type implication and regular field extensions. In Section 2, we define the notion of a good separated basis over a base field $C$ and some consequences, in particular the relation to the assumption that $C$ is a maximal field. In Section 3, we prove some preliminary results towards residue field domination, using the separated basis hypothesis. Finally, in Section 4 we derive the full domination results, after showing that the geometric sorts can be resolved in the field sort.

\subsection{Notation}

We work in two languages, $\calL$ and $\Ltilde$, and two structures, $\monster$ and $\widetilde \monster$. 

We fix $K$, a henselian valued field of equicharacteristic $0$ with bounded Galois group.  The first language, $\calL$, is described in Proposition \ref{qe} below; it depends on $K$. We fix the theory $T$ of $K$ in the language $\calL$.  We let $\monster$ be a monster model of $T$.

The second language, $\Ltilde$, is the language often used for algebraically closed valued fields. We equip the field sort with the usual ring language and use the notation $k$ for the residue field sort in the usual ring language, $\Gamma$ for the value group sort in the language of ordered abelian groups and $\RV$ for the RV-sort with the induced multiplicative group structure. We include the geometric sorts required to eliminate imaginaries, namely $\bigcup_{n=1}^\infty S_n$ for the lattices and $\bigcup_{n=1}^\infty T_n $ for their torsors.  However, the resolution results of  Theorem~\ref{resolution} below and Chapter 11 of \cite{HHM2} allow us to avoid working with the geometric sorts directly in this paper, and thus we will omit their (rather lengthy) definition; a detailed description can be found in \cite[Section 3.1]{HHM} and \cite[Section 7.4]{HHM2}.

We let $\widetilde \monster$ be a monster model of ACVF such that the field sort of $\monster$  embeds into the field sort of $\widetilde\monster$, and such that every automorphism of $\monster$ extends to an automorphism of $\widetilde \monster$ (e.g. $\widetilde \monster$ could be the algebraic closure of $\monster$). 
Throughout the paper, we will use a subscript $\Ltilde$ to indicate not just that we are working in the language $\Ltilde$, but that we are also working in the algebraically closed valued field $\widetilde \monster$ (for instance, when taking definable closure, or specifying a type); no subscript indicates that we are working in the language $\calL$ and in $\monster$. 

Given any definable set $\calS$ and any set of parameters $C$, we write $\calS(C) = \dcl(C)\cap\calS$. If $C$ is a substructure of $\monster$, we write $\calS_C=C\cap \calS$. 
 For any field, we use the superscript $\alg$ to denote its field theoretic algebraic closure. On any field, and in particular on the residue field $k$, we have an independence relation $\aclind$:  for $A,B \subseteq k$, $A\aclind_C B$ means that any finite subset of $k(AC)$ that is field algebraically independent over $k(C)$ remains so over $k(BC)$.

\subsection{Quantifier elimination}

The language $\calL$ is chosen so that the theory of the valued field that we are working with has quantifier elimination. This is derived from the following results as described below. The first is a result of Chernikov and Simon translated into the notation of valued fields.  Note that bounded Galois group implies that the $n$th powers have finite index in the field \cite{FehmJahnke} and hence also in $\RV$.  This is our paper's only use of the assumption of bounded Galois group.  One may construct henselian fields of equicharacteristic 0 where $n$th powers have finite index in $\RV$  but which do not have bounded Galois group (see \cite[Proposition 5.1]{FehmJahnke}).  Our results apply to these fields as well.

\begin{fact}\cite[Proposition 3.1]{hens_inp}\label{QE-for-RV}
Let $K$ be a henselian valued field of equicharacteristic $0$ with bounded Galois group. 
Assume the language $\calL$ is chosen so that:
\begin{itemize}
	\item  $\RV$ has its multiplicative group structure, a predicate for $k$ as a multiplicative subgroup, $n$th power predicates, constants naming a countable subgroup containing representatives of the (finitely many) cosets of the $n$th powers for $n<\omega$ (where representatives of classes which intersect $k$ are chosen in $k$), a sort for $\Gamma$, and a map $v:\RV\to\Gamma$; 
	\item  the language of $\Gamma$ expands the structure induced from $K$,  has no function symbols apart from $+$, and eliminates quantifiers;
	\item  the language of $k$ expands the structure induced from $K$,  has no function symbols apart from $\cdot$, and eliminates quantifiers.
\end{itemize}

Then $(\RV,\Gamma, k)$ has quantifier elimination.
\end{fact}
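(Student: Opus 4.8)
The plan is to establish quantifier elimination by the standard substructure test: it suffices to show that whenever $A$ is a small substructure of $\monster$ in the sorts $\RV,\Gamma,k$, whenever $\sigma\colon A\to\monster$ is an embedding, and whenever $e\in\monster$, the map $\sigma$ extends to an embedding of $A\langle e\rangle$ into $\monster$ (saturation and homogeneity of $\monster$ then take care of the rest). The first thing to record is the shape of such an $A$: it is given by a subgroup $\RV_A\le\RV(\monster)^\times$ (together with $0$) containing all the named constants, a subgroup $\Gamma_A\le\Gamma(\monster)$ with $v(\RV_A)\subseteq\Gamma_A$, and a substructure $k_A$ of $k(\monster)$ with $k_A^\times\subseteq\RV_A$, these data being mutually compatible. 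The element $e$ lies in one of the three sorts, and I would treat the cases in turn.

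If $e\in\Gamma(\monster)$, the only atomic formulas with parameters in $A$ that involve $e$ are formulas of the language of $\Gamma$ (the map $v$ has domain $\RV$, so it contributes nothing here), so by quantifier elimination for $\Gamma$ and saturation the quantifier-free $\Gamma$-type of $e$ over $\Gamma_A$ is realized over $\sigma(\Gamma_A)$, and moving $e$ there changes nothing in the other two sorts. If $e\in k(\monster)$, then $e$ is also a new element of $\RV(\monster)^\times$, lying in $k^\times$ and so with $v(e)=0$; the key observation is that any $\RV$-atomic formula relating $e$ to $\RV_A$ can be rewritten --- using $v(e)=0$, that $\RV_A$ is a group, and that a named representative of each $n$th-power coset meeting $k$ has been chosen inside $k$ --- as an atomic formula over $k_A$ in the language of $k$. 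Hence the quantifier-free type of $e$ over $A$ is determined by its quantifier-free $k$-type over $k_A$, which is realized as before, and adding it creates no new element of $\Gamma$.

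The essential case is $e\in\RV(\monster)^\times$. Put $\gamma=v(e)$. A preliminary $\Gamma$-step lets me assume $\gamma\in\Gamma_A$ (this does not enlarge $\RV_A$), and if moreover $\gamma\in v(\RV_A)$, say $\gamma=v(r)$ with $r\in\RV_A$, then $e/r\in k^\times$ and we reduce to the previous case applied to $e/r$. The genuine content is thus $\gamma\notin v(\RV_A)$; let $e_0\in\{1,2,\dots\}\cup\{\infty\}$ be least with $e_0\gamma\in v(\RV_A)$. When $e_0=\infty$ the cyclic group $\langle e\rangle$ meets $\RV_A$ trivially, $A\langle e\rangle$ acquires no new elements of $k$ or $\Gamma$, and the quantifier-free type of $e$ over $A$ is encoded by the family of $n$th-power cosets (relative to the named constants and $\RV_A$) it occupies; passing to the least common multiple $N$ of finitely many relevant moduli, this reduces to a single coset of $(\RV^\times)^N$ named by some constant $d$, and one checks --- via the quantifier-free $\Gamma$-type of $\gamma$, which records the congruence $v(d)\equiv\gamma\pmod{N\Gamma}$ (a quantifier-free condition, since $v(d)\in v(\RV_A)\subseteq\Gamma_A$) --- that this coset is attainable in $\monster$ with valuation $\sigma(\gamma)$, producing the required image of $e$. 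The harder sub-case is $e_0<\infty$, where $e^{e_0}=c\,r_0$ for some $c\in k^\times$, $r_0\in\RV_A$, and one must simultaneously (i) realize the quantifier-free $k$-type of the residue $c$ over $k_A$, (ii) solve $X^{e_0}=c\,r_0$ --- whose solution set is a coset of $\mu_{e_0}(k)$, every member of which automatically has valuation $\gamma$ because $\Gamma$ is torsion-free --- and (iii) land in the prescribed $n$th-power cosets. I expect this simultaneous realization --- verifying that the resulting partial type over $\sigma(A)$ is finitely satisfiable in $\monster$ --- to be the main obstacle, and it is here that the hypothesis that $n$th powers have finite index in $\RV^\times$ is used: there are then only finitely many cosets, every one of them (in particular every one meeting $k$) is named by a constant, and the compatibility statements that must be transferred along $\sigma$ thereby become quantifier-free.
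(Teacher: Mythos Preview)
The paper does not supply its own proof of this statement: it is recorded as a Fact, with a citation to Chernikov--Simon, so there is no in-paper argument to compare against. Your plan---the substructure/back-and-forth criterion for quantifier elimination, extending an embedding one new point at a time in $\Gamma$, then $k$, then $\RV$---is the standard route and is essentially how the cited reference proceeds.

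Your case split is sound and you correctly isolate the only substantive case, $e\in\RV^\times$ with finite $e_0$. However, you stop short of actually carrying it out: you write that you ``expect'' the simultaneous realization of the $k$-type of $c$, the equation $X^{e_0}=c\,r_0$, and the $P_n$-coset constraints to be consistent, and that this is ``the main obstacle''. That is exactly where the content lies. The mechanism that makes it work is the one you gesture at---finiteness of the $P_n$-cosets lets every coset condition on $e$ be rewritten, via the named constants, as a $P_{ne_0}$-condition on $c\cdot r_0\cdot d_n^{-e_0}$; since the constant involved can always be taken in $k$ (because its valuation lies in $ne_0\Gamma$), this becomes a predicate in the induced structure on $k$, hence part of the quantifier-free $k$-type of $c$; QE for $k$ transfers it, and then one checks that the transferred condition produces an $e_0$-th root in the correct coset---but you have not written this down, so as it stands the proposal is a correct outline rather than a proof. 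One small additional point: in the $k$-case you should also note that $k_A$ must equal $\RV_A\cap k$ (not merely $k_A^\times\subseteq\RV_A$), since $k$ is a predicate on $\RV$; your phrase ``mutually compatible'' presumably intends this, but it is worth making explicit.
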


\begin{fact} {\cite[Theorem 4.1]{Pas}}
Let T be the theory of a henselian valued field of equicharacteristic 0, in the language with sorts for $k$ and $\Gamma$, expanded
by the angular component map. Then T has elimination of field quantifiers.
\end{fact}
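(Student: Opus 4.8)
The plan is to prove elimination of field quantifiers by the usual embedding test. It suffices to show the following. Let $M,N\models T$ with $N$ being $|M|^+$-saturated, let $A\subseteq M$ be an $\calL$-substructure (a subring of the field sort closed under $v$ and $\mathrm{ac}$, with residue field $k_A\subseteq k_M$ and value group $\Gamma_A\subseteq\Gamma_M$), and let $e\colon A\to N$ be an $\calL$-embedding such that the restriction of $e$ to $k_A$ is elementary as a partial map from $k_M$ to $k_N$, and the restriction of $e$ to $\Gamma_A$ is elementary as a partial map from $\Gamma_M$ to $\Gamma_N$; then for every $a$ in the field sort of $M$, $e$ extends to an $\calL$-embedding of the $\calL$-substructure generated by $Aa$ into $N$, with the corresponding elementarity on the residue field and value group sorts preserved. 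Granting this, one extends $e$ to the whole field sort of $M$ by Zorn's lemma and, running the construction symmetrically on the two models, produces a back-and-forth system of partial $\calL$-isomorphisms; this shows that the $\calL$-type of a tuple from the field sort is determined by its field-quantifier-free part together with the complete theories, over the corresponding parameters, of the residue field and of the value group in their own languages, which is exactly elimination of field quantifiers.

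For the extension step, first reduce to the case $A=A^h$ henselian: the henselization inside $M$ is an immediate algebraic extension, $N$ being henselian contains the henselization of $e(A)$, and by uniqueness of the henselization $e$ extends canonically, the angular component on $A^h$ being forced by that on $A$ (an element whose value lies in $\Gamma_A$ is an $A$-multiple of a one-unit, whose residue already lies in $k_{A^h}=k_A$); elementarity on the $k$- and $\Gamma$-sorts is untouched. Now treat $a$ according to the structure of the simple valued-field extension it generates. If $a$ is algebraic over $A$, then, since equicharacteristic $0$ rules out defect, the residue field extension and value group extension are finite and, together with henselianity, determine the extension; these finite data occur in $k_M$ and $\Gamma_M$, hence by the partial-elementarity hypotheses occur in $k_N$ and $\Gamma_N$, and then the valued-field extension embeds into $N$ with $\mathrm{ac}$ extended compatibly. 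Using this one may further normalise so that $k_A$ is relatively algebraically closed in $k_M$ and $\Gamma_M/\Gamma_A$ is torsion-free. A transcendental $a$ then falls into one of three cases: (i) residually transcendental, where one realises a transcendental element of $k_N$ over $e(k_A)$ by saturation and lifts it to the field sort of $N$; (ii) value-group transcendental, i.e. $v(a)$ rationally independent of $\Gamma_A$, where one realises the type of $v(a)$ over $\Gamma_A$ in the language of $\Gamma$ inside $\Gamma_N$ and lifts it; or (iii) $a$ generates an immediate extension of $A$.

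Case (iii) is where equicharacteristic $0$ is genuinely needed, and I expect it to be the main obstacle. By Kaplansky's theory of pseudo-convergence, $a$ is a pseudo-limit of a pseudo-Cauchy sequence $(a_\nu)$ drawn from $A$ with no pseudo-limit in $A$; and since a henselian valued field of equicharacteristic $0$ is algebraically maximal — being defectless, it contains pseudo-limits of all pseudo-Cauchy sequences of algebraic type — the sequence $(a_\nu)$ must be of transcendental type. Kaplansky's classification then gives that $a$ is transcendental over $A$ and that the valued-field isomorphism type of $A(a)$ over $A$ is pinned down by the requirement that $a$ be a pseudo-limit of $(a_\nu)$, which in turn forces $\mathrm{ac}$ on $A(a)$. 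On the $N$-side, the partial type over $e(A)\cup\{e(a_\nu)\}$ expressing that an element is a pseudo-limit of $(e(a_\nu))$ is finitely satisfiable — any finite fragment is witnessed by a sufficiently late term of the sequence — so by $|M|^+$-saturation it is realised by some element $a'$ in the field sort of $N$; sending $a\mapsto a'$ gives the required embedding, and the residue field and value group are unchanged so the elementarity conditions persist. Assembling the cases gives the extension step, and hence the theorem. (An alternative would be to deduce field quantifier elimination from a relative quantifier elimination down to $\RV$ together with the fact that an angular component map splits $\RV$ as $k^\times\times\Gamma$, but the back-and-forth argument above is the self-contained route and is Pas's.)
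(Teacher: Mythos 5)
The paper does not prove this statement at all: it is quoted as a Fact with a citation to Pas, so there is no in-paper argument to compare against. Your sketch is the standard Ax--Kochen/Ershov-style back-and-forth that Pas's own proof follows (reduce to henselian substructures, absorb algebraic extensions using defectlessness in equicharacteristic $0$, then split the transcendental step into residually transcendental, value-transcendental, and immediate cases, the last handled by Kaplansky's pseudo-convergence theory), and the architecture is sound. Three points are compressed enough to deserve a sentence each if this were written out. First, the exhaustiveness of your trichotomy is not automatic: it needs the Abhyankar inequality $\trdeg(k_{A(a)}/k_A)+\dim_{\bQ}(\Gamma_{A(a)}\otimes\bQ/\Gamma_A\otimes\bQ)\le 1$ together with your prior normalisation ($k_A$ relatively algebraically closed in $k_M$, $\Gamma_M/\Gamma_A$ torsion-free) to force the ``both zero'' case to be immediate; also ``residually transcendental'' must be read as a property of the extension $A(a)/A$, with the generator changed to one of the form $(a-b)/c$ of residue transcendental over $k_A$. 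Second, in the value-transcendental case the element $\mathrm{ac}(a)$ may be a genuinely new element of $k_M$, so the image $a'$ must be chosen to realise the $\Gamma$-type of $v(a)$ and the $k$-type of $\mathrm{ac}(a)$ simultaneously; this works because in a model one can adjust any $b$ with the right value by a unit with prescribed residue, but it should be said. Third, maintaining partial elementarity on the $k$- and $\Gamma$-sorts at each step (by realising full types there, using that these sorts are saturated in a saturated model and that they do not interact in the Denef--Pas language) is what ultimately lets the limit map preserve formulas with residue-field and value-group quantifiers. None of these is a gap in the method; they are the bookkeeping that Pas's paper carries out.
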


One can show (e.g. \cite{Cluckers}, \cite[Theorem A]{RK} or  \cite[Corollary 5.8,  assuming the trivial derivation]{Scanlon}) that elimination of field quantifiers with an angular component map implies elimination of field quantifiers relative to $\RV$.  In our case, $\RV$ itself eliminates quantifiers as in Fact \ref{QE-for-RV}, and thus we may conclude Proposition~\ref{qe} below.  We remark that the form in which this proposition is generally used is the following: if $A,B\subset \monster$ are valued fields, and $\sigma:A \to B$ is a valued field isomorphism which induces an isomorphism of $\RV$-structures $\RV_A \to \RV_B$, then $\sigma$ extends to an automorphism of $\monster$.
\begin{prop}\label{qe}

Let $K$ be a henselian valued field of equicharacteristic $0$ with bounded Galois group.  Work in the language with 
\begin{itemize}
    \item the language of rings on $K$,
    \item a sort for $\RV$ and a sort for $\Gamma$, each in the language of groups, 
    \item a predicate for $k\subset \RV$, 
    \item a map $\rv:K\to \RV$, 
    \item a map $v:\RV \to \Gamma$, 
    \item predicates for every subset of $k^m$ and $\Gamma^m$ definable without parameters in the structure induced from $K$, 
    \item predicates for the nth powers in $\RV$, and 
    \item constants for a countable subgroup of $\RV$ containing coset representatives for each of the nth power subgroups of $\RV$, chosen in $k$ where possible.

\end{itemize}
Then $K$ has quantifier elimination.

\end{prop}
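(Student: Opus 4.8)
The plan is to obtain Proposition~\ref{qe} by composing two relative quantifier-elimination results: Pas's theorem on elimination of field quantifiers in henselian valued fields of equicharacteristic $0$, and Fact~\ref{QE-for-RV} on quantifier elimination inside the three-sorted structure $(\RV,\Gamma,k)$. Concretely, I would first show that every $\calL$-formula is equivalent modulo $T$ to one whose quantifiers all range over the sorts $\RV$, $\Gamma$, $k$, and then eliminate those remaining quantifiers.

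For the first step, start from Pas's theorem: the theory of a henselian valued field of equicharacteristic $0$ eliminates field quantifiers in the language with sorts $k$, $\Gamma$ and an angular component map $\mathrm{ac}$. One then observes that the map $\rv:K\to\RV$, together with $v:\RV\to\Gamma$, carries exactly the same information as the pair $(\mathrm{ac},v)$: fixing such an $\mathrm{ac}$, the assignment $\rv(x)\mapsto(\mathrm{ac}(x),v(x))$ is a structure-preserving bijection between $\RV$ and $(k^\times\times\Gamma)\cup\{0\}$, so $\rv$ and $(\mathrm{ac},v)$ are interdefinable. Hence elimination of field quantifiers with an angular component is equivalent to elimination of field quantifiers relative to the $\RV$-$\Gamma$ sorts equipped with $\rv$; this last implication is precisely what is established in \cite{Cluckers}, \cite{RK}, \cite{Scanlon}, and it also removes the need to assume that a global angular component map exists. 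So every formula is $T$-equivalent to one whose quantifiers range only over $\RV$, $\Gamma$, $k$.

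For the second step, note that the language of Proposition~\ref{qe}, restricted to the sorts $\RV$, $\Gamma$, $k$, is a legitimate instance of the language of Fact~\ref{QE-for-RV}: take the language of $\Gamma$ to be $+$ together with a predicate for each $\emptyset$-definable subset of a power of $\Gamma$ in the structure induced from $K$ (which then trivially eliminates quantifiers and has no function symbol beyond $+$), and likewise the language of $k$ to be $\cdot$ together with a predicate for each $\emptyset$-definable subset of a power of $k$; the remaining symbols — the multiplicative structure on $\RV$, the predicate for $k\subseteq\RV$, the map $v$, the $n$th power predicates, and the constants naming a countable subgroup of $\RV$ meeting every coset of every $n$th power subgroup, with representatives chosen in $k$ where possible — are exactly those demanded by Fact~\ref{QE-for-RV}. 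The bounded-Galois-group hypothesis enters here: by \cite{FehmJahnke} it forces the $n$th powers to have finite index in $\RV$, so that such a countable subgroup of constants exists and names representatives of only finitely many cosets for each $n$. Fact~\ref{QE-for-RV} then gives quantifier elimination for $(\RV,\Gamma,k)$ in this language.

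Combining the two steps: an arbitrary formula is first replaced by a $T$-equivalent one with no field quantifiers, and then the quantifiers over $\RV$, $\Gamma$, $k$ are eliminated using Fact~\ref{QE-for-RV}, leaving a quantifier-free formula in exactly the language listed in Proposition~\ref{qe}. I expect the main obstacle to be the bookkeeping where the two relative statements meet — checking that the structure induced from $K$ on $\RV$, $\Gamma$, $k$ is the one named in the language, so that no hidden structure is introduced when the two back-and-forth arguments are concatenated, and translating Pas's angular-component formulation into the $\rv$ formulation used here — but this is by now routine and is what the cited works supply. Finally I would record the form of the conclusion actually used in the sequel: if $A,B\subseteq\monster$ are valued subfields and $\sigma:A\to B$ is a valued field isomorphism inducing an isomorphism $\RV_A\to\RV_B$ of $\RV$-structures, then, by the quantifier elimination just proved together with a back-and-forth, $\sigma$ extends to an automorphism of $\monster$.
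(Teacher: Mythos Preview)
Your proposal is correct and follows exactly the route the paper takes: combine Pas's elimination of field quantifiers, translated from the angular-component setting to the $\RV$ setting via the cited works \cite{Cluckers}, \cite{RK}, \cite{Scanlon}, with Fact~\ref{QE-for-RV} applied to $(\RV,\Gamma,k)$ in the Morleyized language. Your remarks on where the bounded Galois group hypothesis enters and on the back-and-forth reformulation also match the paper's discussion verbatim.
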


\begin{rem}\label{stably embedded}
It follows from this proposition that the value group and residue field  are stably embedded in the following strong form: if $\varphi(x,a)$ defines a subset of $k^n$, then there is a term $t$ and quantifier free formula $\theta$ such that  $\theta(x,t(a))$ defines the same subset.  Given that $\theta$ is quantifier free, it is clear that $t(a)$ lies in the $\RV$-structure (either in $\RV$ itself or in $\Gamma$).  It is easy to check that $t(a)$ can be chosen to lie in the residue field.  The same argument also shows that if $X$ is a subset of $\Gamma$ defined over $a$ then it is also defined over $t(a)\in \Gamma$ for some term $t$.
Note that this is slightly stronger than the definition of stable embeddedness, which does not require the parameter in the stably embedded set to be in $\dcl(a)$. 

 We would not in general expect this strong form of stable embeddedness to hold for an individual fibre in $\RV$ which we write as $\RV_\gamma=\{x\in \RV : v(x)=\gamma\}$. For consider the subset of $\RV_\gamma \times \RV_\gamma$ defined by $x\cdot y^{-1}=a$ where $a\in k$.  However, if one assumes that $\RV_\gamma$ contains some point $a_0$ that is expressible as a term $t_0(a)$, then it is again true that any definable subset of ${\RV_\gamma}^n$ defined over $a$ is defined over a term $t(a)$ with $t(a)\in \RV_\gamma$.  For if $X$ is such a set, $X\cdot {a_0}^{-1}$ is a definable subset of the residue field, and therefore definable over $t'(a)\in k$ for some term $t'$.  Hence $X\cdot {a_0}^{-1}$ is also definable over $t'(a)\cdot t_0(a)\in \RV_\gamma$, and so is $X$.

Lastly, the quantifier elimination result implies that the residue field and value group are orthogonal to each other.
\end{rem}

\subsection{Domination: definition and basic properties}

Residue field domination is defined by analogy with stable domination, which we now recall (\cite[Definition 3.9]{HHM2}). Given a set of parameters $C$ in $\widetilde{\monster}$, let $\St_C$ be the multi-sorted structure whose sorts are the $C$-definable stable, stably embedded subsets of $\widetilde{\monster}$. The structure $\St_C$ is itself stable, so stable forking gives an independence relation $\ind$.

\begin{defi}\label{stable domination definition}
We say that $\tp_{\Ltilde}(a/C)$ is {\em stably dominated} if for any $b\in\widetilde{\monster}$, whenever $\St_C(aC) \ind_C \St_C(bC)$ we have $\tp_{\Ltilde}(b/C\St_C(aC)) \vdash \tp_{\Ltilde}(b/Ca)$.
\end{defi}

The definition captures our intuition that a stably dominated type should have no interaction with the value group in the following sense.

\begin{fact}\label{stably dominated implies orthogonal}\cite[Corollary 10.8]{HHM2} 
 $\tp_{\Ltilde}(a/C)$ is stably dominated if and only if it is orthogonal to $\Gamma$. 
\end{fact}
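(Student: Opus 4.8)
The plan is to prove the two implications separately: the forward direction is essentially formal, whereas the converse rests on the metastability of ACVF over $\Gamma$. Throughout I would work over an algebraically closed base, since both ``stably dominated'' and ``orthogonal to $\Gamma$'' are unchanged when $C$ is replaced by $\acl_{\Ltilde}(C)$; so assume $C=\acl_{\Ltilde}(C)$. I also fix the reading of orthogonality: $\tp_{\Ltilde}(a/C)$ is orthogonal to $\Gamma$ means that for every model $M\supseteq C$ and every $a'\models\tp_{\Ltilde}(a/C)$ with $\tp_{\Ltilde}(a'/M)$ nonforking over $C$ (in the sense of \cite{HHM2}), one has $\Gamma(Ma')=\Gamma(M)$.

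For $(\Rightarrow)$, assume $\tp_{\Ltilde}(a/C)$ is stably dominated; it then has a unique nonforking extension to any model, so it suffices to exhibit one $a'$ as above with $\Gamma(Ma')=\Gamma(M)$. Recall two standard features of ACVF: the sorts of $\St_C$ are internal to the residue field $k$, and $k$ is orthogonal to $\Gamma$ (this last being part of Remark~\ref{stably embedded}). Given a model $M\supseteq C$, I would use saturation and the existence axiom for stable independence in $\St_C$ to choose $a'\models\tp_{\Ltilde}(a/C)$ with $\St_C(Ca')\ind_C\St_C(M)$; then $\tp_{\Ltilde}(a'/M)$ is the nonforking extension, and stable domination gives $\tp_{\Ltilde}(M/C\St_C(Ca'))\vdash\tp_{\Ltilde}(M/Ca')$. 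Now I invoke the symmetry and additivity properties of stable domination from \cite{HHM2}: a stably dominated type over an algebraically closed base realizes no new value-group elements, so $\Gamma(Ca')=\Gamma(C)$, and over an independent amalgam the value group is additive, $\Gamma(Ma')=\dcl(\Gamma(M)\cup\Gamma(Ca'))$; combining these gives $\Gamma(Ma')=\Gamma(M)$. (Alternatively, one argues directly that $\St_C(Ca')$ is $k$-internal, hence adjoining it to $M$ adds no $\Gamma$-points, and that the displayed domination forces any $\gamma\in\Gamma\cap\dcl(Ma')$ into $\dcl(M\,\St_C(Ca'))$.)

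For $(\Leftarrow)$, assume $\tp_{\Ltilde}(a/C)$ is orthogonal to $\Gamma$. Extend $C$ to a maximally complete model $M\supseteq C$ — available since we are in equicharacteristic $0$ — and let $a$ realize a nonforking extension of $\tp_{\Ltilde}(a/C)$ over $M$, so $a\ind_C M$. The metastability of ACVF over $\Gamma$, which is the main structural output of the relevant part of \cite{HHM2} and is extracted from the analysis of lattices over maximally complete fields, asserts that $\tp_{\Ltilde}(a/M\Gamma(Ma))$ is stably dominated. But $a\ind_C M$ together with the orthogonality hypothesis forces $\Gamma(Ma)=\Gamma(M)$, so $\tp_{\Ltilde}(a/M)$ is itself stably dominated. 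Finally, I apply the descent principle for stable domination from \cite{HHM2}: since $M$ is a maximally complete model containing $C=\acl_{\Ltilde}(C)$, $a\ind_C M$, and $\tp_{\Ltilde}(a/M)$ is stably dominated, it follows that $\tp_{\Ltilde}(a/C)$ is stably dominated.

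The main obstacle is the metastability input used in $(\Leftarrow)$: showing that a type over a maximally complete model becomes stably dominated once its value-group part is named is precisely the hard structure theory of ACVF — the study of lattices over maximally complete fields, the reduction of their modules modulo the maximal ideal, and the internality of the resulting objects to $k$ — so a genuinely self-contained argument would reprove a substantial portion of \cite{HHM2}. Secondary points needing care are the identification of the correct independence notion over nonforking extensions in ACVF, the precise form of the descent lemma, the behaviour of $\Gamma$ under passage to $\acl_{\Ltilde}(C)$, and, for the forward direction, the auxiliary lemmas on additivity of $\Gamma$ over independent amalgams and the absence of new value-group elements in a stably dominated type; each of these is available in the literature but carries its own verification.
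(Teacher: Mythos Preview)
The paper does not prove this statement at all: it is recorded as a \emph{Fact} with a citation to \cite[Corollary~10.8]{HHM2}, and the only additional content in the paper is the remark immediately following it explaining how to pass from unary sequences (the case treated in \cite{HHM2}) to arbitrary elements of the geometric sorts via \cite[Proposition~7.14]{HHM2} and \cite[Lemma~10.9]{HHM2}. So there is no ``paper's own proof'' to compare against; your proposal is a sketch of the argument behind the cited corollary, not of anything appearing here.

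As a reconstruction of the \cite{HHM2} argument your outline is reasonable in shape: the forward direction via $k$-internality of $\St_C$ and orthogonality of $k$ and $\Gamma$, and the converse via passage to a maximally complete base, the structure theorem that types over such a base are stably dominated over the value group, and then descent. You are also right to flag that the real content lies in the metastability input and that a self-contained proof would amount to reproving a chunk of Chapter~12 of \cite{HHM2}. One small point: the descent step in \cite{HHM2} is a bit more delicate than simply ``$a\ind_C M$ and $\tp(a/M)$ stably dominated implies $\tp(a/C)$ stably dominated''; you should make sure you are invoking the correct version (e.g.\ \cite[Theorem~4.9 or Theorem~12.6]{HHM2}) with its actual hypotheses, rather than an informal paraphrase.
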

Notice that Corollary 10.8 and the definition of orthogonality in \cite[Definition 10.1]{HHM2} are only given in the original for the case when $a$ is a unary sequence. However they both can be stated in more generality, since for any element $s$ and any set $C$ in the geometric sorts of a valued field, there is a unary sequence, $a$, with the same $\Ltilde$-definable closure over $C$ (\cite[Proposition 2.3.10]{HHM} or \cite[Proposition 7.14]{HHM2}). For such an $s$ and $a$, one may define $\tp(s/C)$ to be orthogonal to $\Gamma$ if $\tp(a/C)$ is orthogonal to $\Gamma$, noting by \cite[Lemma 10.9]{HHM2} that this is independent of the choice of $a$.

The structure $\St_C$ can be defined in any structure, but it may be trivial or hard to identify. In an algebraically closed valued field, $\St_C$ is interdefinable with the collection of sorts internal to the residue field, which are themselves interdefinable (with parameters) with the residue field. This motivates the following definition for a valued field that is not necessarily algebraically closed. Notice that residue field domination as defined here is a very strong property, since the independence notion we are working with is very weak.  It is thus surprising that we can prove instances of residue field domination in Section~4.

\begin{defi}\label{domination definition}
We say that $\tp(a/C)$ is {\em residue field dominated} if for any $b\in \monster$, if $k(aC)\aclind_C k(bC)$, then $\tp(b/C k(Ca))\vdash \tp(b/Ca)$.  
\end{defi}

\noindent When $\monster$ is itself algebraically closed, it is immediate that residue field domination implies stable domination. If $\monster$ is, for example, a real closed valued field, this implication does not hold.  The converse is not true even when $\monster$ is algebraically closed, as the following example illustrates. In particular, this example shows that issues may arise when the type is over parameters in the value group sort. 

\begin{ex}\label{stably not residue field dominated}
Let $C=\bQ$ and let $a\in\monster$ be a field element of positive valuation. Then $C$ is maximal because it is trivially valued, $L=\dcl(a)$ has $k_L = k_C$ and hence is automatically a regular extension, and $\Gamma_L$ is a torsion free extension of $\Gamma_C$ (which is the trivial group). So by \cite[Theorem 12.18]{HHM}, $\tp(a/C\Gamma_L)$ is stably dominated. However, $\tp(a/C\Gamma_L)$ is not residue field dominated. For if we take $M=L$,  the independence condition holds trivially since $k_M=k_L=k_C$, but it is not the case that $\tp(L/C\Gamma_Lk_L)$ implies $\tp(L/M)=\tp(L/L)$.
\end{ex}

\noindent We are able to prove a version of \cite[Theorem 12.18]{HHM}, involving $RV$-domination instead of residue field domination, which we will define in Section~3.1.

 In \cite{HHM}, it is shown that stable domination is insensitive to whether or not the base is algebraically closed.

\begin{fact}\label{stable domination lifts to acl}\cite[Corollary 3.31]{HHM}
$\tp(a/C)$ is stably dominated if and only if $\tp(a/\acl(C))$ is stably dominated. 
\end{fact}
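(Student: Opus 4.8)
The plan is to route the statement through the characterization of stable domination as orthogonality to $\Gamma$ (Fact~\ref{stably dominated implies orthogonal}). By the discussion following that fact we may assume $a$ is a unary sequence, so $\tp(a/C)$ is stably dominated exactly when it is orthogonal to $\Gamma$, and likewise over $\acl(C)$; hence it suffices to show that orthogonality to $\Gamma$ is insensitive to replacing the base $C$ by $\acl(C)$. Unravelling the definition, orthogonality to $\Gamma$ of a type over a base $B$ asserts that for every (suitably independent) extension $D\supseteq B$ the type $\tp(a/D)$ is weakly orthogonal to every type over $D$ concentrated on $\Gamma$.

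The implication from $C$ to $\acl(C)$ is then formal: any base $D\supseteq\acl(C)$ against which $\tp(a/\acl(C))$ must be tested also satisfies $D\supseteq C$, and (using that $a$ is independent from $\acl(C)$ over $C$, together with transitivity of independence, should the definition carry an independence clause) $D$ is among the bases tested for $\tp(a/C)$; so orthogonality over $C$ yields orthogonality over $\acl(C)$ at once. It is worth noting that this is precisely the direction that is \emph{not} transparent from the $\St_C$-formulation of Definition~\ref{stable domination definition}, where one would instead be forced to compare $\St_C$ with $\St_{\acl(C)}$.

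The substantive direction is to descend orthogonality along the algebraic extension $\acl(C)/C$. Fix an extension $C'\supseteq C$ over which weak orthogonality of $\tp(a/C')$ with $\Gamma$ must be verified, and put $D:=\acl(C)C'\subseteq\acl(C')$. Since $D$ is algebraic over $C'$, the hypothesis applies and $\tp(a/D)$ is weakly orthogonal to $\Gamma$. If weak orthogonality failed over $C'$ we would obtain, after moving realizations of $\tp(a/C')$ by automorphisms fixing $C'$, tuples $\gamma_1,\gamma_2$ from $\Gamma$ with $\gamma_1\equiv_{C'}\gamma_2$ but $a\gamma_1\not\equiv_{C'}a\gamma_2$. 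The key point is that $\gamma_1\equiv_{C'}\gamma_2$ already forces $\gamma_1\equiv_D\gamma_2$: because $\Gamma$ is stably embedded with induced structure a divisible ordered abelian group, in which $\acl=\dcl$, one restricts to the induced structure over $\Gamma(C')$, enlarges the base to $\Gamma(\acl(C'))$ using $\acl=\dcl$, restricts back to $\Gamma(D)$, and returns to $D$ via stable embeddedness. Feeding $\gamma_1\equiv_D\gamma_2$ into the weak orthogonality of $\tp(a/D)$ with $\Gamma$ yields $a\gamma_1\equiv_D a\gamma_2$, hence $a\gamma_1\equiv_{C'}a\gamma_2$, a contradiction. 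Thus $\tp(a/C)$ is orthogonal to $\Gamma$.

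I expect the main obstacle to be this descent step, and inside it the transfer of $\equiv_{C'}$ to $\equiv_{\acl(C)C'}$ for tuples from $\Gamma$, which is where the special structure of the value group enters; the reduction to unary sequences and the handling of the independence clause in the definition of orthogonality are routine. An alternative closer to \cite{HHM} avoids Fact~\ref{stably dominated implies orthogonal} and argues directly from Definition~\ref{stable domination definition}: one shows that, up to naming finitely many algebraic elements, the sorts of $\St_{\acl(C)}$ are those of $\St_C$, so that the stable data $\St_C(aC)$ witnessing domination over $C$ can be traded for data witnessing domination over $\acl(C)$ and back, using the base-change and descent properties of stable domination; there the comparison of $\St_C$ and $\St_{\acl(C)}$ is the technical heart.
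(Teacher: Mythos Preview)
The paper does not prove this statement: it is quoted as a fact from \cite[Corollary~3.31]{HHM} (the book on stable domination), with no argument given here. So there is no proof in the present paper to compare your proposal against.

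That said, a few remarks on your approach. Your route through Fact~\ref{stably dominated implies orthogonal} is specific to ACVF, whereas Corollary~3.31 in the cited reference is a general result about stable domination in an arbitrary theory, proved directly from the $\St_C$ formulation and the descent/base-change machinery of Chapter~3 there. More seriously, there is a circularity risk: in the development of \cite{HHM2}, Corollary~3.31 sits in Chapter~3 and is part of the general toolkit that is later used in Chapter~10, so invoking the orthogonality-to-$\Gamma$ characterization to recover it reverses the logical order of the source. Whether this is an actual cycle depends on exactly which lemmas feed into Corollary~10.8, but it is something you would have to check rather than assume.

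Your argument itself is essentially sound within ACVF, with the expected friction points being the precise form of the independence clause in the definition of orthogonality (over which bases $D$ one quantifies, and whether transitivity applies cleanly) and the step lifting $\gamma_1\equiv_{C'}\gamma_2$ to $\gamma_1\equiv_{\acl(C)C'}\gamma_2$; the latter does go through as you indicate, using stable embeddedness of $\Gamma$ and $\acl=\dcl$ in the induced o-minimal structure. The alternative you sketch in your last paragraph, comparing $\St_C$ with $\St_{\acl(C)}$ and using the general descent properties of stable domination, is in fact the line taken in the cited reference and avoids both the ACVF-specificity and the circularity concern.
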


\noindent This is not true for residue field domination, as the following example illustrates. We make use here, and many times later, of the following basic fact.

\begin{fact}
Let $C\subset\Utilde$, $a\in\Utilde$. Then $\dcl_{\Ltilde}(Ca)$ (restricted to the field sort) is the henselization of the field generated by $a$
 over $C$.
 \end{fact}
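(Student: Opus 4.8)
The plan is to reduce to the case where $C$ is a valued subfield of $\Utilde$ and $a$ a (possibly infinite) tuple of field elements — the general case can be reduced to this one using the resolution results (Theorem~\ref{resolution}) — and then to show $\dcl_{\Ltilde}(Ca)\cap K = L^h$, where $L := C(a)$ denotes the field generated inside $\Utilde$ and $L^h$ its henselization, taken in its distinguished copy inside $\Utilde$. This copy exists and is unique because $\Utilde$, being algebraically closed, is henselian, and the henselization embeds uniquely over $L$ into any henselian extension. I would then argue the two inclusions separately.

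For $L^h \subseteq \dcl_{\Ltilde}(Ca)$, I would note that any $\sigma\in\Aut(\Utilde/Ca)$ fixes $C$ and $a$, hence fixes $L$ pointwise, so its restriction to $L^h$ is a valued-field embedding of $L^h$ into the henselian field $\Utilde$ over $L$. By the uniqueness clause in the universal property of the henselization, this restriction is the identity on $L^h$. Thus $L^h$ is pointwise fixed by $\Aut(\Utilde/Ca)$, and since $\Utilde$ is a monster model this gives $L^h\subseteq\dcl_{\Ltilde}(L)\subseteq\dcl_{\Ltilde}(Ca)$.

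For the reverse inclusion $\dcl_{\Ltilde}(Ca)\cap K\subseteq L^h$, take a field element $d\in\dcl_{\Ltilde}(Ca)$. By model completeness of ACVF one has $\acl_{\Ltilde}(Ca)\cap K = L^{\alg}$, so $d$ is algebraic over $L$; set $E:=L^h$, which is henselian with $E^{\alg}=L^{\alg}$. Suppose for contradiction that $d\notin E$. Since the characteristic is $0$, $d$ is separable over $E$ of degree $m\geq 2$, and its $m$ distinct $E$-conjugates $d=d_1,\dots,d_m$ all lie in $E^{\alg}\subseteq\Utilde$. The key point is that $E$ is henselian, so $v|_E$ extends uniquely to $E^{\alg}$; hence for each $i$ the field isomorphism $\tau_i\colon E(d)\to E(d_i)$ over $E$ with $\tau_i(d)=d_i$ automatically preserves the valuation (the valuation transported to $E(d_i)$ is an extension of $v|_E$, hence the unique one induced from $\Utilde$). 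A valued-field isomorphism between subfields of $\Utilde$ induces compatible isomorphisms of the residue field, value group and $\RV$ sorts, so by quantifier elimination for ACVF it extends to an automorphism of $\Utilde$; applied to some $\tau_i$ with $i\geq 2$ this produces an automorphism of $\Utilde$ fixing $Ca$ (since $Ca\subseteq L\subseteq E$) but sending $d$ to $d_i\neq d$, contradicting $d\in\dcl_{\Ltilde}(Ca)$. Hence $d\in E=L^h$, and the two inclusions give the equality.

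I expect the main obstacle to be isolating and correctly invoking the structural facts about henselizations: that the henselization embeds uniquely over the base into any henselian extension (this powers the first inclusion), that it is an \emph{algebraic} extension (so the computation of $\acl_{\Ltilde}(Ca)\cap K$ applies), and that a henselian field admits a \emph{unique} extension of its valuation to each algebraic extension (this is precisely what makes field-conjugation over $E$ valuation-preserving, hence realizable by $\Ltilde$-automorphisms). The remaining ingredients are routine use of the ACVF toolkit — model completeness, quantifier elimination, and the identification of $\dcl$ with the $\Aut(\Utilde/\cdot)$-fixed points in a monster model.
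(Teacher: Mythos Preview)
The paper states this as a Fact without proof; it is treated as a well-known result about ACVF, so there is no proof in the paper to compare against. Your argument is correct and is the standard one: the inclusion $L^h\subseteq\dcl_{\Ltilde}(L)$ follows from the universal property of the henselization (uniqueness of the embedding over $L$ into any henselian extension), and the reverse inclusion uses that a henselian field admits a unique extension of its valuation to each algebraic extension, so that field conjugation over $E=L^h$ is automatically valuation-preserving and hence, by quantifier elimination for ACVF, realizable by automorphisms of $\Utilde$.

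One minor remark: your opening reduction via Theorem~\ref{resolution} is both unnecessary and out of logical order. The Fact is stated and used in Section~1, well before resolutions are developed in Section~4, and in the paper's applications $C$ is already a subfield and $a$ a field element (or tuple), which is exactly the case your core argument handles. You can simply drop that reduction step.
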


\begin{ex}
Let $K$ be an algebraically closed valued field of characteristic 0, let $t$ be an element of positive valuation, and consider $C=\dcl(\mathbb{Q}(t))$.  We note that $\sqrt{t}$ cannot be in $C$ since the definable closure of $\mathbb{Q}(t)$ is the henselization of $\mathbb{Q}(t)$, which is an immediate extension.   Let $a=\sqrt{t}$.  Clearly $\tp(a/\acl(C))$ is stably dominated and residue field dominated.  Yet $\tp(a/C)$ is stably dominated but not residue field dominated.  To see the second statement, choose $b=a$.  One has $k(aC)\aclind_C k(aC)$ since $a\in \acl(C)$.  Since $\sqrt{t}$ generates a ramified extension of $C$, $k(Ca)=k(C)$.  Thus $\tp(a/Ck(Ca))=\tp(a/C)$, and clearly $\tp(a/C)$ cannot imply $\tp(a/Ca)$.

On the other hand, $\tp(a/C)$ is stably dominated.  Since $a\in\acl(C)$, $a$ is in a $C$-definable stable, stably embbeded set, i.e. is in $\St(C)$. So automatically $\tp(b/C\St_C(a))$ implies $\tp(b/Ca)$ for any $b$.
\end{ex}

\noindent However we do get the following, slightly weaker, statement.  The proof uses Proposition~\ref{resdominationequivalent} below. 

 \begin{prop}\label{base acl}
For $C\subset \monster$ and $a\in\monster$, let $C^+ = \acl(C)\cap \dcl(Ca)$. Then $\tp(a/C^+)$ is residue field dominated if and only if $\tp(a/\acl(C))$ is residue field dominated.
\end{prop}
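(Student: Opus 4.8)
The plan is to reduce the statement about the intermediate base $C^+$ to the statement about $\acl(C)$ by exploiting the fact that $\acl(C)$ is an algebraic extension of $C^+$ whose "residue-field and $\RV$ data" are already realized over $C^+$. The reason to expect this to work is that residue field domination over $C^+$ should be stable under passing to the algebraic closure precisely when the extra parameters in $\acl(C)$ contribute nothing new to the residue field (over the relevant bases) — and that is what $C^+ = \acl(C)\cap\dcl(Ca)$ is designed to guarantee, since any new residue field elements coming from $\acl(C)$ that interact with $a$ would already lie in $\dcl(Ca)$ and hence in $C^+$.

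First I would invoke Proposition~\ref{resdominationequivalent} (referenced in the statement) to restate residue field domination in whatever equivalent automorphism-theoretic or type-extension form it provides; presumably it says that $\tp(a/D)$ is residue field dominated iff for every $b$ with $k(aD)\aclind_D k(bD)$, every $D k(Da)$-automorphism (or partial elementary map) fixing the relevant residue data extends appropriately to $a$. Working with that criterion, the two directions are:

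\emph{($\Leftarrow$)} Assume $\tp(a/\acl(C))$ is residue field dominated. Given $b$ with $k(aC^+)\aclind_{C^+} k(bC^+)$, I would want to produce the required type implication over $C^+$. The key point is that $\acl(C)$ is algebraic over $C^+$, and by the Fact recalled just before the proposition, $k(\acl(C)) = k(C^+)^{\alg}$ in the relevant sense (more precisely, the residue field doesn't grow beyond algebraic elements when passing from $C^+$ to $\acl(C)$, since $C^+$ already contains $\acl(C)\cap\dcl(Ca)$). One then transfers the independence hypothesis up to $\acl(C)$ — here the regularity/algebraic-independence bookkeeping of $\aclind$ is used, noting $\aclind$ is insensitive to adding algebraic parameters to the base — applies domination over $\acl(C)$, and pushes the conclusion back down to $C^+$.

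\emph{($\Rightarrow$)} Assume $\tp(a/C^+)$ is residue field dominated, and take $b$ with $k(a\acl(C))\aclind_{\acl(C)} k(b\acl(C))$. Since $\acl(C)$ is algebraic over $C^+$, this independence descends to $k(aC^+)\aclind_{C^+} k(bC^+)$ (again using that $\aclind$ ignores algebraic base extensions). Apply domination over $C^+$ to get $\tp(b/C^+ k(C^+a)) \vdash \tp(b/C^+ a)$, and then upgrade this to the statement over $\acl(C)$: one must check that $\tp(b/\acl(C)\, k(\acl(C)a))$ still implies $\tp(b/\acl(C)\,a)$. This uses that $\acl(C) a$ is algebraic over $C^+ a$ together with the definition of $C^+$ — any residue field element in $k(\acl(C)a)$ that is \emph{not} already in $k(C^+a)$ would have to come from the interaction of $\acl(C)$ with $a$, but such an element lies in $\dcl(Ca)\cap\acl(C) = C^+$, a contradiction; so $k(\acl(C)a) = k(C^+ a)$ up to algebraic elements over $\acl(C)$, and the implication transfers.

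The main obstacle I anticipate is the bookkeeping in this last transfer step: carefully verifying that $k(\acl(C)a)$ adds nothing to $k(C^+ a)$ beyond elements algebraic over $\acl(C)$ (so that the residue-field base of the domination statement is genuinely unchanged), and that the type implication is preserved under the algebraic base extension $C^+ \subseteq \acl(C)$. This requires combining the characterization from Proposition~\ref{resdominationequivalent}, the explicit description of definable closure as henselization (the Fact immediately preceding the proposition), and the behavior of $\aclind$ under algebraic extensions of the base — none individually hard, but needing to be assembled without slippage. The role of the hypothesis $C^+ = \acl(C)\cap\dcl(Ca)$ is exactly to close this gap, and I would make sure the proof visibly uses it at that point.
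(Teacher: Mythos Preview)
Your overall strategy matches the paper's, but there is a genuine gap in the ($\Rightarrow$) direction and you underestimate the work in the ($\Leftarrow$) direction.

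For ($\Rightarrow$): your claim that an element of $k(\acl(C)a)\setminus k(C^+a)$ ``lies in $\dcl(Ca)\cap\acl(C)=C^+$'' is false---such an element lies in $\dcl(\acl(C)a)$, not in $\dcl(Ca)$, and need not be algebraic over $C$ at all. So you cannot conclude that $k(\acl(C)a)$ agrees with $k(C^+a)$ modulo $\acl(C)$, and your upgrade of the type implication from base $C^+$ to base $\acl(C)$ does not follow. The paper sidesteps this entirely: rather than testing domination of $\tp(a/C^+)$ against $b$, it tests it against the tuple $\acl(C)b$. Since $\acl(C)$ is interalgebraic with $C^+$, the hypothesis gives $k(C^+a)\aclind_{C^+} k(\acl(C)b)$; domination over $C^+$ together with Proposition~\ref{resdominationequivalent} then yields $\tp(a/C^+k(\acl(C)b))\vdash\tp(a/\acl(C)b)$ directly, and enlarging the base on the left finishes.

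For ($\Leftarrow$): your outline is correct, but ``push the conclusion back down to $C^+$'' is the technical heart of the paper's argument, not routine bookkeeping. Given $\psi(x,c,d)$ with $c\in\acl(C)$ and $d\in k(\acl(C)a)$ implying $\varphi(x,a)\in\tp(b/C^+a)$, one must build a formula over $C^+k(C^+a)$ that still implies $\varphi(x,a)$. A naive disjunction over $\Aut(\monster/C^+a)$-conjugates of $(c,d)$ fails, because pairing a conjugate $\sigma(c)$ with an unmatched conjugate $\sigma'(d)$ need not imply $\varphi$. The paper instead tracks, for each $\sigma(c)$ in the orbit of $c$, the orbit $e_{\sigma(c)}$ of $\sigma(d)$ over $\sigma(c)k(C^+a)$, and checks that the resulting family of pairs is $C^+k(C^+a)$-invariant. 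This is exactly where the definition of $C^+$ is used (the finite $C^+a$-definable orbit of $c$ sits inside $\acl(C)$, so its code lies in $\acl(C)\cap\dcl(Ca)=C^+$). Expect the real work here, not in the other direction.
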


\begin{proof}
For the right-to-left direction, choose $b$ such that $k(C^+a)\aclind_{C^+} k(C^+b)$.  Since fields code finite sets, if $d_1\in \acl(C)$ and the orbit of $d_1$ over $C$ is $d_1, \dots, d_n$, then $\{d_1,\dots, d_n\}\in\dcl(C)$ and $d_1, \dots, d_n \in \textrm{alg}(\dcl(C))$, where $\textrm{alg}$ denotes the field theoretic algebraic closure.  Thus $\acl(C)\subseteq \textrm{alg}(C^+)$. Note that $k(C^+a)\aclind_{C^+} k(C^+b)$ implies $\textrm{alg}(k(C^+a))\aclind_{\textrm{alg}(C^+)} \textrm{alg}(k(C^+b))$, which in turn implies $k(\textrm{alg}(C^+a))\aclind_{\textrm{alg}(C^+)} k(\textrm{alg}(C^+b))$, which implies 
 $k(\acl(C)a)\aclind_{\acl(C)} k(\acl(C)b)$.  
 
 Thus one has $\tp(b/\acl(C)k(\acl(C)a))\vdash\tp(b/\acl(C)a)$ and one wants $\tp(b/C)\vdash\tp(b/C^+a)$.   Choose $\varphi(x,a)\in \tp(b/C^+a)$.  This is implied by some $\psi(x,c, d)\in \tp(b/\acl(C)k(\acl(C)a))$, with $c\in \acl(C)$ and $d\in k(\acl(C)a)$.  
Let $X = \{ \sigma(c)\sigma(d) : \sigma \in \Aut(\monster/C^+a)\}$. Notice that $X_1 = \{ \sigma(c) : \sigma \in \Aut(\monster/C^+a)\}$ is finite, so $Ca$-definable, and in $\acl(C)$, hence fixed by any automorphism fixing $C^+$. Also $X_2 = \{ \sigma(d) : \sigma \in \Aut(\monster/C^+a)\}$ is $C^+a$-definable and in the residue field, and thus $X_2\in k(C^+a)$.

Thus the formula, $\theta_0$, given by
\[ 
	\bigvee_{\sigma(c)\in X_1} \bigvee_{\sigma'(d)\in X_2} \psi(x,\sigma(c),\sigma'(d))
\]
is over $C^+k(C^+a)$ as desired, and for any $\sigma(c)\sigma(d)$ in $X$, we know that $\psi(x,\sigma(c),\sigma(d))$ implies $\varphi(x,a)$. However, if $\sigma'$ is some other automorphism fixing $C^+a$, it may be the case that $\psi(x,\sigma(c),\sigma'(d))$ does not imply $\varphi(x,a)$, and so we must tweak $\theta_0$. If $\sigma'$ is such an isomorphism, then $\sigma(c)\sigma'(d)\not\equiv_{C^+a} \sigma(c)\sigma(d)$ and thus 
$\sigma'(d) \not\equiv_{\sigma(c) k(C^+a)} \sigma(d)$. For each $\sigma\in\Aut(\monster/C^+a)$, let $e_{\sigma(c)}$ be the orbit of $\sigma(d)$ over $\sigma(c)k(C^+a)$. Then the formula, $\theta$, given by 
\[ 
	\bigvee_{\sigma(c)\in X_1} \bigvee_{d'\in e_{\sigma(c)}} \psi(x,\sigma(c),d')
\]
 implies $\varphi(x,a)$.

 We claim that $\{\sigma(c)e_{\sigma}(c) : \sigma\in \Aut(\monster/C^+a)\}$ is $C^+k(C^+a)$-definable, and hence the displayed formula above gives the required domination statement. Consider $\tau$ an automorphism fixing $C^+k(C^+a)$. Since $\tau$ fixes $C^+$, $\tau$ maps $X_1$ to itself, so there is an automorphism $\sigma$ fixing $C^+a$ such that $\tau(c) = \sigma(c)$. It suffices to show that $\tau(d)\in e_{\sigma(c)}$. By definition, $\sigma(d)\in e_{\sigma(c)}$. Now $\tau\circ\sigma^{-1}$ fixes $\sigma(c)$ and $k(C^+a)$, and $\tau\circ\sigma^{-1}(\sigma(d)) = \tau(d)$, which hence lies in the $\Aut(\monster/\sigma(c)k(C^+a))$-orbit of $\sigma(d$), as required.

For the other direction, take $b$ with $k(\acl(C)a)\aclind_{\acl(C)} k(\acl(C)b)$.  It suffices, by Proposition \ref{resdominationequivalent}, to show that $\tp(a/\acl(C)k(\acl(C)b))\vdash\tp(a/\acl(C)b)$.  Note that, by replacing the set $k(\acl(C)a)$ with a subset and replacing the set $\acl(C)$ in the base with something interalgebraic with it, we have 
 \[ k(C^+a)\aclind_{C^+} k(\acl(C)b).
 \]
Thus we may apply residue field domination of $\tp(a/C^+)$ where our tuple from $\monster$ is $\acl(C)b$, obtaining (again applying Proposition \ref{resdominationequivalent}) $\tp(a/C^+k(\acl(C)b))\vdash\tp(a/\acl(C)b)$.  So certainly \[ \tp(a/\acl(C)k(\acl(C)b))\vdash\tp(a/\acl(C)b)\] 
as well.

 \end{proof}

\subsection{Type implications}

Since many of our arguments involve showing type implications, it is useful to make the following very general observations.

\begin{lemma}\label{type implications}
Let $A$, $B$, $C$ be subsets of a monster model $\monster$ in some language, with $C\subseteq A\cap B$. Then
\begin{enumerate}[(i)]
    \item $\tp(A/C) \vdash \tp(A/B)$ is equivalent to $\tp(B/C) \vdash \tp(B/A)$;
    \item if $\tp(A/C) \vdash \tp(A/B)$  and $\tp(B'/C) = \tp(B/C)$ then $\tp(A/C) \vdash \tp(A/B')$ .
\end{enumerate}
\end{lemma}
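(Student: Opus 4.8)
\emph{Proof strategy.} Both parts are soft facts about the (strongly homogeneous, saturated) monster model $\monster$, and I would prove them working entirely with automorphisms. The one standing convention I need is that $A,B,C$ are \emph{small} subsets of $\monster$, so that every partial elementary map with small domain extends to an automorphism of $\monster$ and every type over a small set is realized in $\monster$. Below I write $X\equiv_C Y$ for $\tp(X/C)=\tp(Y/C)$, and use freely that $X\equiv_C Y$ holds iff some $\sigma\in\Aut(\monster/C)$ carries $X$ to $Y$ (and then $\sigma$ fixes $C$ pointwise, so $\sigma(C)=C$).

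For (i) I would exploit the symmetry between $A$ and $B$: it is enough to prove that $\tp(A/C)\vdash\tp(A/B)$ implies $\tp(B/C)\vdash\tp(B/A)$, since applying this with the roles of $A$ and $B$ interchanged gives the converse. To prove the one implication I would fix an arbitrary realization $B'$ of $\tp(B/C)$ and show $B'\equiv_A B$. Choose $\sigma\in\Aut(\monster/C)$ with $\sigma(B)=B'$. Then $\sigma^{-1}(A)\equiv_C A$, so the hypothesis gives $\sigma^{-1}(A)\equiv_B A$; since $C\subseteq B$, a witnessing automorphism fixes both $C$ and $B$, hence $(\sigma^{-1}(A),B)\equiv_C(A,B)$. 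Now apply $\sigma$: conjugating the witness by $\sigma$ (which fixes $C$) and using $\sigma(\sigma^{-1}(A),B)=(A,B')$ yields $(A,B')\equiv_C(\sigma(A),B')$, while $\sigma(A,B)=(\sigma(A),B')$ gives $(\sigma(A),B')\equiv_C(A,B)$. Chaining the two, $(A,B')\equiv_C(A,B)$, so some automorphism fixing $C$ carries $(A,B)$ to $(A,B')$; it fixes $A$ and sends $B$ to $B'$, so $B'\equiv_A B$, as required.

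For (ii) I would simply bootstrap off (i). By (i), the hypothesis $\tp(A/C)\vdash\tp(A/B)$ is equivalent to $\tp(B/C)\vdash\tp(B/A)$. Since $B'$ realizes $\tp(B/C)$ it then realizes $\tp(B/A)$, i.e. $\tp(B'/A)=\tp(B/A)$ (note in particular $C\subseteq B'$, as $\tp(B'/C)=\tp(B/C)$ and $C\subseteq B$). Consequently every realization of $\tp(B'/C)=\tp(B/C)$ realizes $\tp(B/A)=\tp(B'/A)$, which is exactly the assertion $\tp(B'/C)\vdash\tp(B'/A)$; applying (i) once more, now to the triple $A,B',C$, converts this back to $\tp(A/C)\vdash\tp(A/B')$.

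Since everything here is formal, I do not expect any genuine obstacle: the only points demanding care are keeping all parameter sets small (so that partial elementary maps really do extend to automorphisms of $\monster$) and the left/right bookkeeping of which automorphism fixes which set in the chain of equivalences for (i). If one prefers a manifestly symmetric treatment of (i), one can instead observe that $\tp(A/C)\vdash\tp(A/B)$ is equivalent to the factorization $\Aut(\monster/C)=\Aut(\monster/AC)\cdot\Aut(\monster/BC)$, whose invariance under swapping $A$ and $B$ follows by taking inverses.
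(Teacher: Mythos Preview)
Your proof is correct and follows essentially the same approach as the paper: both arguments work entirely with automorphisms of the monster model, and your treatment of (ii) is virtually identical to the paper's (use (i) to swap to $\tp(B/C)\vdash\tp(B/A)$, deduce $\tp(B'/A)=\tp(B/A)$, then swap back). For (i) the paper is slightly more direct: it chooses $\sigma\in\Aut(\monster/C)$ with $\sigma(B')=B$, applies the hypothesis to obtain $\tau\in\Aut(\monster/B)$ with $\tau(\sigma(A))=A$, and observes that $\tau\sigma$ already fixes $A$ and sends $B'$ to $B$; your conjugation-and-chaining detour through $(A,B')\equiv_C(A,B)$ is a longer path to the same automorphism.
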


\begin{proof}
\begin{enumerate}[(i)]
    \item Suppose $\tp(A/C) \vdash \tp(A/B)$ and $\tp(B'/C) = \tp(B/C)$. Let $\sigma\in\Aut(\monster/C)$ with $\sigma(B')=B$. As $\tp(\sigma(A)/C) = \tp(A/C)$, by the type implication assumption, also $\tp(\sigma(A)/B) = \tp(A/B)$. Thus there is $\tau\in\Aut(\monster/B)$ such that $\tau(\sigma(A))=A$. Then $\tau(\sigma(B'))=B$, so $\tp(B'/A) = \tp(B/A)$.

    \item By (i), it is equivalent to show that $\tp(B'/C)\vdash \tp(B'/A)$, which is the same statement as $\tp(B/C)\vdash \tp(B'/A)$ .  Also by (i), we have $\tp(B/C) \vdash \tp(B/A)$.  So we need only establish that $\tp(B'/A)=\tp(B/A)$.  But since we know that $\tp(B/C) \vdash \tp(B/A)$, we know that anything (e.g. $B'$) that realizes $\tp(B/C)$ must also realize $\tp(B/A)$. Thus $B'\models \tp(B/A)$ and $\tp(B'/A)=\tp(B/A)$.
    
\end{enumerate}
\end{proof}

The following lemma is stated in \cite[Remark 3.7]{HHM2} for the stable part of a structure. We prove it here using Remark \ref{stably embedded} which allows us to avoid the assumption of elimination of imaginaries. Let $\calS$ be any definable set that is stably embedded in the strong sense defined in Remark~\ref{stably embedded}. Later we will take $\calS$ to be the residue field, the value group, or some collection of fibres of $\RV$ where for each $\gamma$, $\RV_\gamma(CB)$ is non-empty.

\begin{lemma}\label{Remark 3.7 but no e of i}
For any sets $A$, $B$, $C$ in $\monster$, $\tp(B/C\calS(CB)) \vdash \tp(B/C\calS(CB)\calS(CA))$.
\end{lemma}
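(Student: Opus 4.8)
The plan is to reduce the statement, via Lemma~\ref{type implications}(i), to a type implication that only mentions tuples from $\calS$, and then to invoke the strong stable embeddedness of $\calS$ recorded in Remark~\ref{stably embedded}. Write $D=\calS(CB)$. Applying Lemma~\ref{type implications}(i) with $C\cup D\cup B$, $C\cup D\cup\calS(CA)$, and $C\cup D$ in the roles of $A$, $B$, $C$ (each of the first two sets contains the base $C\cup D$, and adjoining the base to the tuples changes nothing), the target $\tp(B/C\calS(CB))\vdash\tp(B/C\calS(CB)\calS(CA))$ becomes equivalent to
\[
\tp(\calS(CA)/C\calS(CB))\vdash\tp(\calS(CA)/C\calS(CB)\,B).
\]
So it suffices to prove this last implication.

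For that, I would use two observations. First, since $D\subseteq\dcl(CB)$ we have $\dcl(CDB)=\dcl(CB)$, hence $\calS(CDB)=\calS(CB)=D$. Second, $\calS(CA)$ is a tuple of elements of $\calS$, so $\tp(\calS(CA)/C\calS(CB)B)$ is a type in variables $x$ of sort $\calS$. Now argue formula by formula: given $\varphi(x,\bar p)$ in that type, with $\bar p$ a parameter from $C\cup D\cup B$, the set $\varphi(\monster,\bar p)\cap\calS^{|x|}$ is $\bar p$-definable, and $\bar p\subseteq\dcl(CDB)=\dcl(CB)$, so by strong stable embeddedness of $\calS$ it is definable over $\calS(CB)=D$; write it as $\theta(\monster,\bar d)$ with $\bar d\in D$, and note $\theta(\monster,\bar d)\subseteq\varphi(\monster,\bar p)$. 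Since $\calS(CA)\subseteq\calS^{|x|}$ and $\calS(CA)$ satisfies $\varphi(x,\bar p)$, it also satisfies $\theta(x,\bar d)$, so $\theta(x,\bar d)\in\tp(\calS(CA)/C\calS(CB))$; and $\theta(x,\bar d)\vdash\varphi(x,\bar p)$ because $\theta(\monster,\bar d)\subseteq\varphi(\monster,\bar p)$. As $\varphi$ was arbitrary, this yields $\tp(\calS(CA)/C\calS(CB))\vdash\tp(\calS(CA)/C\calS(CB)B)$, as required.

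The only step needing real care is the parameter bookkeeping: one must route the defining parameter of $\varphi(\monster,\bar p)\cap\calS^{|x|}$ through the equality $\dcl(CDB)=\dcl(CB)$ so that it lands inside $\calS(CB)=D$ rather than merely inside $\calS(CAB)$, and one must use the refinement emphasised in Remark~\ref{stably embedded} that the defining parameter may be chosen inside $\calS$ itself — that is exactly what forces it into $D$ and makes $\theta(x,\bar d)$ a member of $\tp(\calS(CA)/C\calS(CB))$. Once the reduction via Lemma~\ref{type implications}(i) is in place, there is no further obstacle.
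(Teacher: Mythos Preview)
Your proof is correct. It uses the same key ingredient as the paper (the strong stable embeddedness of $\calS$ from Remark~\ref{stably embedded}) but packages the argument slightly differently: the paper works directly, taking $B'\equiv_{C\calS(CB)}B$ and, for a formula $\varphi(x,a,b)$ with $a\in\calS(CA)$, applying stable embeddedness to the $CB$-definable subset $\varphi(B,y,b)$ of $\calS$ to obtain $\theta(y,\tilde b)$ with $\tilde b\in\calS(CB)$, then noting that $\forall y\,[\theta(y,\tilde b)\to\varphi(x,y,b)]$ lies in $\tp(B/C\calS(CB))$ and implies $\varphi(x,a,b)$. You instead first flip via Lemma~\ref{type implications}(i) to the equivalent statement $\tp(\calS(CA)/C\calS(CB))\vdash\tp(\calS(CA)/C\calS(CB)B)$, which makes the application of stable embeddedness entirely transparent since the variable tuple now lives in $\calS$. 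This is a cosmetic rearrangement rather than a genuinely different argument; both routes come down to the observation that any $CB$-definable subset of a power of $\calS$ is already $\calS(CB)$-definable.
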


\begin{proof}
We may assume $B$ is finite. Take $B' \equiv_{C\calS(CB)} B$.  We wish to show that $B' \equiv_{C\calS(CB)\calS(CA)} B$, so take $\varphi(x,a,b) \in \tp(B/C\calS(CA)\calS(CB))$ with $a\in \calS(CA)$ and $b\in \calS(CB)$.  We wish to show that $\varphi(B',a,b)$ holds.

Consider the set defined by $\varphi(B,y, b)$.  This is a subset of $\calS$, defined over $CB$, and hence definable by some $\theta(y,\tilde b)$ where $\tilde b \in \calS(CB)$ as described in Remark \ref{stably embedded}. Thus $\forall y \; [\theta(y,\tilde b) \rightarrow \varphi(x,y,b)] \in \tp(B/C\calS(CB))$.

Since $\forall y \; [\theta(y,\tilde b) \rightarrow \varphi(B',y,b)]$ holds and $\theta(a,\tilde b)$ also holds, it follows that $\varphi(B',a,b)$ holds. 
\end{proof}

From this, we derive equivalences for the type implication in the definition of residue field domination.

\begin{prop}\label{resdominationequivalent}
For any $a, b, C$ in $\monster$ the following are equivalent:
\begin{enumerate}[(i)]
    \item $\tp(b/C \calS(Ca))\vdash \tp(b/Ca)$;
    \item $\tp(a/C\calS(Cb))\vdash \tp(a/Cb)$;
    \item $\tp(\calS(bC)/C\calS(Ca)) \cup \tp(b/C) \vdash \tp(b/Ca)$;
    \item $\tp(a/C\calS(Ca)\calS(Cb))\vdash \tp(a/Cb)$.
    \item $\tp(a/C\calS(Ca))\vdash \tp(a/Cb)$
\end{enumerate}
\end{prop}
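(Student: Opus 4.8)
I will prove the five conditions equivalent through the links (i)$\Leftrightarrow$(iii), (iv)$\Leftrightarrow$(v), (v)$\Leftrightarrow$(i) and (i)$\Leftrightarrow$(ii), using only Lemma~\ref{type implications}, Lemma~\ref{Remark 3.7 but no e of i}, and the strong stable embeddedness of $\calS$ from Remark~\ref{stably embedded}. Throughout, $\calS(Ca),\calS(Cb),\dots$ denote the fixed tuples enumerating $\calS$ via the $C$-definable functions. The one bookkeeping device I need is an \emph{absorption principle}: if $C\calS(Ca)\subseteq F$ and $C\subseteq E$, then $\tp(a/F)\vdash\tp(a/E)$ implies $\tp(a/F)\vdash\tp(a/E\calS(Ca))$, and likewise with $a,b$ exchanged. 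Indeed, any $a'\models\tp(a/F)$ has $\calS(Ca')=\calS(Ca)$ as enumerated tuples, since an automorphism fixing $C\calS(Ca)$ pointwise and sending $a$ to $a'$ carries the enumeration of $\calS(Ca)$ to that of $\calS(Ca')$ while fixing it pointwise; hence an automorphism witnessing $a'\equiv_E a$ must fix $\calS(Ca)$ pointwise, so $a'\equiv_{E\calS(Ca)}a$.

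For (i)$\Leftrightarrow$(iii) it is enough to see that the partial type $\tp(\calS(bC)/C\calS(Ca))\cup\tp(b/C)$ is interdeducible with $\tp(b/C\calS(Ca))$, for then (iii) is (i) with an equivalent hypothesis. The forward direction is trivial: $\tp(b/C\calS(Ca))$ contains $\tp(b/C)$ and determines $\tp(\calS(bC)/C\calS(Ca))$ because $\calS(bC)\subseteq\dcl(Cb)$. The reverse direction repeats the proof of Lemma~\ref{Remark 3.7 but no e of i}: given $\varphi(x,c,d)\in\tp(b/C\calS(Ca))$ with $c\in C$, $d\in\calS(Ca)$, strong stable embeddedness produces a $\emptyset$-definable $\theta$ and $\tilde b=g(b)\in\calS(bC)$ with $\{y\in\calS:\varphi(b,c,y)\}=\theta(\calS,\tilde b)$; then $\forall y\,(\theta(y,g(x))\to\varphi(x,c,y))\in\tp(b/C)$ and $\theta(d,\cdot)$, read in the variable for the $g$-coordinate of $\calS(bC)$, belongs to $\tp(\calS(bC)/C\calS(Ca))$, and together they force $\varphi(x,c,d)$.

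For (iv)$\Leftrightarrow$(v): the implication (v)$\Rightarrow$(iv) is immediate because $\tp(a/C\calS(Ca))\subseteq\tp(a/C\calS(Ca)\calS(Cb))$, and (iv)$\Rightarrow$(v) follows by composing (iv) with the instance $\tp(a/C\calS(Ca))\vdash\tp(a/C\calS(Ca)\calS(Cb))$ of Lemma~\ref{Remark 3.7 but no e of i}. For (v)$\Leftrightarrow$(i): by the absorption principle (with $F=C\calS(Ca)$, $E=Cb$), (v) is equivalent to $\tp(a/C\calS(Ca))\vdash\tp(a/Cb\calS(Ca))$, which by Lemma~\ref{type implications}(i) (taking $A=Ca$, $B=Cb\calS(Ca)$ over the common base $C\calS(Ca)$, and using $\tp(b/Ca\calS(Ca))=\tp(b/Ca)$) is equivalent to $\tp(b/C\calS(Ca))\vdash\tp(b/Ca)$, i.e.\ to (i).

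It remains to link (i) and (ii), and this is where the main subtlety lies: the five conditions concern a \emph{fixed} triple $a,b,C$ and are not manifestly symmetric in $a$ and $b$, so symmetry cannot be invoked directly to pass from (i) to (ii). The fix is to transport everything over the symmetric base $D:=C\calS(Ca)\calS(Cb)$, where Lemma~\ref{type implications}(i) does apply. Concretely, $\dcl(Db)=\dcl(Cb\calS(Ca))$ and $\dcl(Da)=\dcl(Ca\calS(Cb))$; enlarging the base in the equivalent form of (i) obtained above, and using $\tp(a/C\calS(Ca))\vdash\tp(a/D)$ from Lemma~\ref{Remark 3.7 but no e of i}, one gets that (i) is equivalent to $\tp(a/D)\vdash\tp(a/Db)$; the same argument with $a,b$ swapped gives that (ii) is equivalent to $\tp(b/D)\vdash\tp(b/Da)$; and Lemma~\ref{type implications}(i) applied to $D\subseteq Da\cap Db$ identifies these two. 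I expect this transport step, together with the bookkeeping of moving $\calS$-parameters between bases, to be the only real obstacle; the hidden engine making it work is the — at first sight surprising — content of Lemma~\ref{Remark 3.7 but no e of i}, namely that $\tp(a/C\calS(Ca))$ already determines $\tp(a/C\calS(Ca)\calS(Cb))$.
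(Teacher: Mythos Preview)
Your proof is correct. The ingredients are the same as the paper's --- strong stable embeddedness via Lemma~\ref{Remark 3.7 but no e of i} and the symmetry from Lemma~\ref{type implications}(i) --- but the packaging differs in two places worth noting.

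First, for (v)$\Rightarrow$(i) the paper gives a bare two-automorphism chase: take $\sigma$ witnessing $b'\equiv_{C\calS(Ca)}b$, set $\tilde a=\sigma^{-1}(a)$, use (v) to get $\tau$ witnessing $\tilde a\equiv_{Cb}a$, and observe that $\sigma\circ\tau$ fixes $a$ and sends $b$ to $b'$. Your ``absorption principle'' together with Lemma~\ref{type implications}(i) is exactly this argument abstracted: the absorption step is the observation that $\sigma$ fixing $C\calS(Ca)$ forces $\calS(Ca')=\calS(Ca)$, and Lemma~\ref{type implications}(i) is the $\sigma\circ\tau$ composition. Your version is reusable and makes the role of $\calS(Ca)\subseteq\dcl(Ca)$ transparent; the paper's is shorter to write down.

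Second, for (i)$\Leftrightarrow$(ii)$\Leftrightarrow$(iii) the paper simply cites \cite[Lemma~3.8]{HHM2}. Your route for (i)$\Leftrightarrow$(ii) --- passing to the symmetric base $D=C\calS(Ca)\calS(Cb)$ so that Lemma~\ref{type implications}(i) applies directly --- is clean and self-contained, and makes explicit why Lemma~\ref{Remark 3.7 but no e of i} is the real engine: it is precisely what lets you enlarge the base from $C\calS(Ca)$ to $D$ without losing information. The paper's cycle (ii)$\Rightarrow$(iv)$\Rightarrow$(v)$\Rightarrow$(i) together with the HHM2 citation is marginally more economical but less illuminating on this point.
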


\begin{proof}
The proof of the equivalence of (i), (ii) and (iii) is exactly the proof of \cite[Lemma 3.8]{HHM2}, replacing the stable, stably embedded sorts with the definable set $\calS$, and referring to Lemma \ref{Remark 3.7 but no e of i} in lieu of \cite[Remark 3.7]{HHM2}.  The fact that (ii) implies (iv) is trivial, and that (iv) implies (v) is immediate by Lemma \ref{Remark 3.7 but no e of i}.

To prove that (v) implies (i), assume (v).  Take $b, b'\models \tp(b/C\calS(Ca))$ and $\sigma$ witnessing this.   Suppose that $\sigma^{-1}(a)=\tilde a$ and note that $a, \tilde a \models \tp(a/C\calS(Ca))$ and thus by $(v)$ they both satisfy $\tp(a/Cb)$.  Choose $\tau:a\mapsto \tilde a$ witnessing this.  Thus $(\sigma\circ\tau)(a)=\sigma(\tilde a)=a$ and $(\sigma \circ \tau)(b)=\sigma(b)=b'$, and we see that $b, b'\models \tp(b/Ca)$.  
\end{proof}

\noindent We will have need of the following result, which we will use in the form of the subsequent lemma.

\begin{fact}\label{HHM2 equality of value groups}\cite[8.22(ii)]{HHM2}
 Let $C\subseteq A,B$ be algebraically closed valued fields and suppose that $\Gamma(C) = \Gamma(A)$, the transcendence degree of $B$ over $C$ is 1, and there is no embedding of $B$ into $A$ over $C$. Then $\Gamma(AB)=\Gamma(B)$.
\end{fact}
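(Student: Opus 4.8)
The plan is to reduce the problem to a statement about the value group of a single transcendence-degree-one extension and then run through the standard trichotomy for such extensions. Since $B$ is algebraically closed with $\trdeg(B/C)=1$, I would fix $t\in B$ transcendental over $C$ with $B=C(t)^{\alg}$. Then $t$ must be transcendental over $A$: otherwise $t\in A$ (as $A=A^{\alg}$), so $B=C(t)^{\alg}\subseteq A$ and the inclusion is an embedding of $B$ into $A$ over $C$, contrary to hypothesis. Consequently $AB$ is algebraic over $A(t)$, so $(AB)^{\alg}=A(t)^{\alg}$; since the value group of the algebraic closure of a valued field is the divisible hull of its value group, $\Gamma(AB)\subseteq\Gamma((AB)^{\alg})=\bQ\otimes\Gamma(A(t))$ while $\Gamma(B)=\bQ\otimes\Gamma(C(t))$. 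As $\Gamma(C(t))\subseteq\Gamma(A(t))$, it therefore suffices to prove $\Gamma(A(t))\subseteq\bQ\otimes\Gamma(C(t))$, which squeezes $\Gamma(B)\subseteq\Gamma(AB)\subseteq\bQ\otimes\Gamma(A(t))\subseteq\bQ\otimes\Gamma(B)=\Gamma(B)$ together.

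Next I would appeal to the classical description of extensions of a valuation from an algebraically closed field $C$ to a rational function field $C(t)$ (pseudo-Cauchy sequences in the style of Kaplansky; this is the sort of material underlying the analysis of one-types in \cite{HHM}): after replacing $t$ by $t-c$ or $(t-c)/d$ for suitable $c,d\in C$ --- which alters neither $C(t)$ nor $A(t)$ --- the extension $C(t)/C$ is of exactly one of three kinds: (a) value-transcendental, $v(t)\notin\Gamma(C)$; (b) residue-transcendental, $v(t)=0$ and $\res(t)$ transcendental over $k(C)$, in which case $\Gamma(C(t))=\Gamma(C)$; (c) immediate, with $t$ a pseudo-limit of a pseudo-Cauchy sequence $(a_\rho)$ from $C$ of transcendental type that has no pseudo-limit in $C$, in which case again $\Gamma(C(t))=\Gamma(C)$. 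Case (a) is easy: $\Gamma(C)=\Gamma(A)$ is divisible, so $v(t)\notin\Gamma(A)$ either, hence $A(t)/A$ is again value-transcendental with $\Gamma(A(t))=\Gamma(A)\oplus\mathbb{Z}\,v(t)=\Gamma(C)\oplus\mathbb{Z}\,v(t)\subseteq\bQ\otimes\Gamma(C(t))$.

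For (b) and (c) the point is that the hypothesis \emph{no embedding of $B$ into $A$ over $C$} forces the relevant datum to remain generic over $A$. In case (b): if $\res(t)$ is transcendental over $k(A)$, then $A(t)/A$ carries the Gauss valuation and $\Gamma(A(t))=\Gamma(A)$. Otherwise $\res(t)\in k(A)$ (as $k(A)$ is algebraically closed) while $\res(t)\notin k(C)$; choosing $a\in A$ with $v(a)=0$ and residue $\res(t)$ --- necessarily transcendental over $C$, since $C$ is algebraically closed --- the map $t\mapsto a$ is a valued-field isomorphism $C(t)\to C(a)$ over $C$ (both sides carry the Gauss valuation), which extends to a field isomorphism $C(t)^{\alg}\to C(a)^{\alg}\subseteq A$ and, after composition with a suitable element of $\Aut(C(t)^{\alg}/C(t))$, to a \emph{valued}-field embedding $B\hookrightarrow A$ over $C$ --- a contradiction; so $\Gamma(A(t))=\Gamma(A)$. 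Case (c) is parallel: if $(a_\rho)$ had a pseudo-limit $a\in A$, then, since $(a_\rho)$ is of transcendental type over $C$ (so $a\notin C$ and $a$ is transcendental over $C$), the map $t\mapsto a$ would again be a valued-field isomorphism $C(t)\to C(a)$ over $C$, extending after the same Galois twist to an embedding $B\hookrightarrow A$ over $C$, a contradiction; hence $(a_\rho)$ has no pseudo-limit in $A$, and since $A$ is algebraically closed --- hence algebraically maximal --- this forces $(a_\rho)$ to be of transcendental type over $A$, so that $A(t)/A$ is immediate and $\Gamma(A(t))=\Gamma(A)$. In all three cases $\Gamma(A(t))\subseteq\bQ\otimes\Gamma(C(t))$, which completes the argument.

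The hard part is not any single step but assembling the valuation-theoretic infrastructure: the trichotomy for transcendence-degree-one extensions of an algebraically closed valued field, together with enough of the Kaplansky theory of pseudo-Cauchy sequences --- the existence and behaviour of pseudo-limits and the fact that over an algebraically maximal (in particular, algebraically closed) valued field every pseudo-Cauchy sequence of algebraic type has a pseudo-limit in the field. A minor but genuine subtlety is the ``Galois twist'' used in cases (b) and (c): since $C(t)$ with the Gauss (respectively immediate) valuation is not henselian, a valued isomorphism $C(t)\cong C(a)$ need not extend to the algebraic closures as a valued isomorphism on the nose, but any field extension of it can be corrected by composing with an automorphism of $C(t)^{\alg}$ that permutes the conjugate extensions of the valuation.
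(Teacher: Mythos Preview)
The paper does not prove this statement: it is recorded as a \emph{Fact} with a citation to \cite[8.22(ii)]{HHM2} and is then used as a black box in the proof of Lemma~\ref{Gamma(LF) is Gamma(L)}. So there is no proof in the paper to compare your attempt against.

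That said, your argument is a correct and self-contained proof of the cited result, and it follows essentially the same line as the original in \cite{HHM2}: reduce to a single transcendental $t$, invoke the classification of valuations on $C(t)$ for $C$ algebraically closed, and in each of the three cases use the non-embeddability hypothesis to show that the extension $A(t)/A$ is of the same kind as $C(t)/C$. The one place to be slightly careful is your use of the symbol $\Gamma(\cdot)$: in the paper's conventions $\Gamma(X)=\dcl_{\Ltilde}(X)\cap\Gamma$, which for a field $F$ is the divisible hull of $\Gamma_F$ rather than $\Gamma_F$ itself. Your reduction still goes through, since $A$, $B$, $C$ are algebraically closed (so $\Gamma(A)=\Gamma_A$, etc.) and you are already passing to divisible hulls at the end; just make sure the notation is consistent if you write this up. The ``Galois twist'' you flag is indeed the only genuinely delicate point, and you have handled it correctly.
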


\noindent Recall that we use $\Gamma(C)$ to mean $\dcl(C)\cap \Gamma$.  In the following lemma, as we are working in $\widetilde \monster$, the definable closure is taken in $\Ltilde$.

\begin{lemma}\label{Gamma(LF) is Gamma(L)}
    Let $C\subseteq F, L$ be valued fields contained in  $\widetilde{\monster}$ such that $L$ is transcendence degree at least 1 over $C$, $\tp_{\Ltilde}(L/C)\vdash \tp_{\Ltilde}(L/F)$, and $\Gamma(F)=\Gamma(C)$. Then $\Gamma(LF)=\Gamma(L)$. 
\end{lemma}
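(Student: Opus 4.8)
The plan is to build $L$ up from $C$ through a chain of transcendence-degree-one extensions and to apply Fact~\ref{HHM2 equality of value groups} at each step. Before doing this I would make two harmless reductions. First, since $\Gamma$ is stably embedded, any $\gamma\in\Gamma(LF)$ is $\dcl_{\Ltilde}$-definable over a finite subtuple $\bar a$ of $L$ together with a finite subtuple of $F$; since $\tp_{\Ltilde}(L/C)\vdash\tp_{\Ltilde}(L/F)$ restricts to $\bar a$ and $\Gamma(C(\bar a)^{\alg})\subseteq\Gamma(L)$, I may replace $L$ by $C(\bar a)^{\alg}$ and so assume $\trdeg(L/C)<\infty$ (the case $\trdeg(L/C)=0$ is trivial, as then $LF$ is algebraic over $F$ and $\Gamma(F)=\Gamma(C)\subseteq\Gamma(L)$). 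Second, since $\Gamma(E)$ is the divisible hull of $v(E^\times)$ for every valued subfield $E$ of $\widetilde{\monster}$ and $L^{\alg}F^{\alg}$ is algebraic over $LF$, the asserted equality for $(C,F,L)$ is equivalent to that for $(C^{\alg},F^{\alg},L^{\alg})$; and the hypothesis transfers to these algebraic closures by combining Lemma~\ref{type implications}(i) with the fact that $\acl$ can be enumerated compatibly with conjugation (so an automorphism sending $F$ to a $C$-conjugate sends $F^{\alg}$ to the corresponding conjugate of $F^{\alg}$). Thus I may assume $C$, $F$, $L$ are algebraically closed and $1\le\trdeg(L/C)<\infty$.

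I would then induct on $n=\trdeg(L/C)\ge 1$. For $n=1$, write $L=\acl_{\Ltilde}(Ct)$ with $t$ transcendental over $C$ and apply Fact~\ref{HHM2 equality of value groups} with $A=F$ and $B=L$: one has $\Gamma(A)=\Gamma(F)=\Gamma(C)$ and $\trdeg(B/C)=1$, and the remaining hypothesis — no embedding of $L$ into $F$ over $C$, i.e. $\tp_{\Ltilde}(t/C)$ omitted in $F$ — follows from $\tp_{\Ltilde}(L/C)\vdash\tp_{\Ltilde}(L/F)$: if $s\in F$ realized $\tp_{\Ltilde}(t/C)$, then restricting the hypothesis to $t$ gives $t\equiv_F s$, so $t=s\in F$, and a further application forces $t\in\dcl_{\Ltilde}(C)=C$, contradicting $\trdeg(L/C)=1$. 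Hence $\Gamma(LF)=\Gamma(L)$.

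For the passage from $n$ to $n+1$, pick $t\in L$ transcendental over $C$ and put $C_1=\acl_{\Ltilde}(Ct)\subseteq L$. The $n=1$ case applied to $C\subseteq F,C_1$ gives $\Gamma(C_1F)=\Gamma(C_1)$, hence $\Gamma((C_1F)^{\alg})=\Gamma(C_1)$. The key point is that $\tp_{\Ltilde}(L/C)\vdash\tp_{\Ltilde}(L/F)$ upgrades to $\tp_{\Ltilde}(L/C_1)\vdash\tp_{\Ltilde}(L/(C_1F)^{\alg})$: because $C_1\subseteq L$, any automorphism fixing $C$ and carrying a $C$-conjugate of $L$ back onto $L$ must fix $C_1$ pointwise, which yields $\tp_{\Ltilde}(L/C_1)\vdash\tp_{\Ltilde}(L/C_1F)$; then Lemma~\ref{type implications}(i) together with the $\acl$-enumeration remark pushes this up to $(C_1F)^{\alg}$. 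Applying the inductive hypothesis over the base $C_1$ to $L$ and $(C_1F)^{\alg}$, which have transcendence degree $n$ between them, gives $\Gamma(L(C_1F)^{\alg})=\Gamma(L)$, and since $LF\subseteq L(C_1F)^{\alg}$ this forces $\Gamma(LF)=\Gamma(L)$.

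I expect the main obstacle to be precisely this bookkeeping with the type-implication hypothesis: it does not survive naive restriction to the enlarged base $C_1$ nor naive passage to algebraic closures, and the remedy rests on Lemma~\ref{type implications}(i), on the fact that $C_1$ sits \emph{inside} $L$ (so automorphisms realigning copies of $L$ automatically fix $C_1$ pointwise), and on the standard fact that $\acl$ admits an enumeration compatible with conjugation. Once these are in place, each step is a direct application of Fact~\ref{HHM2 equality of value groups}.
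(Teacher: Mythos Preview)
Your overall strategy---induction on transcendence degree with Fact~\ref{HHM2 equality of value groups} at each step---is the same as the paper's, and the finite-transcendence-degree reduction and the argument that $\tp_{\Ltilde}(L/C_1)\vdash\tp_{\Ltilde}(L/C_1F)$ when $C_1\subseteq L$ are both fine. The gap is in your reduction to algebraically closed $C,F,L$, and in its echo at the inductive step where you pass to $(C_1F)^{\alg}$.

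The justification you give---Lemma~\ref{type implications}(i) together with an ``$\acl$-enumeration remark''---does not establish that $\tp_{\Ltilde}(L/C)\vdash\tp_{\Ltilde}(L/F)$ implies $\tp_{\Ltilde}(L^{\alg}/C^{\alg})\vdash\tp_{\Ltilde}(L^{\alg}/F^{\alg})$. Concretely: from $\tp_{\Ltilde}(F/C)\vdash\tp_{\Ltilde}(F/L)$ you get, for a given enumeration $\bar f$ of $F^{\alg}$ and any $\bar f'\equiv_{C^{\alg}}\bar f$, an automorphism $\tau$ fixing $L$ with $\tau|_F=\sigma|_F$ (where $\sigma$ witnesses $\bar f'\equiv_{C^{\alg}}\bar f$). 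But $\tau(\bar f)$ and $\bar f'=\sigma(\bar f)$ are two enumerations of $(F')^{\alg}$ that agree on $F'$; they differ by some $\mu\in\Aut(\widetilde\monster/F')$, and nothing forces $\mu$ to fix $L$ (let alone $L^{\alg}$). So you have not shown $\bar f'\equiv_{L^{\alg}}\bar f$. The same issue blocks your passage from $C_1F$ to $(C_1F)^{\alg}$ in the inductive step.

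The paper sidesteps this entirely by \emph{not} reducing to algebraically closed fields. In the $n=1$ case it argues directly that no $\ell\in L\setminus\acl_{\Ltilde}(C)$ has a $C$-conjugate in $\acl_{\Ltilde}(F)$: if $\ell'\equiv_C\ell$ with $\ell'\in\acl_{\Ltilde}(F)$, then $\ell'\equiv_F\ell$, so $\tp_{\Ltilde}(\ell/F)$ is algebraic; but then \emph{every} realization of $\tp_{\Ltilde}(\ell/C)$ lies in this finite set, forcing $\ell\in\acl_{\Ltilde}(C)$. One then applies Fact~\ref{HHM2 equality of value groups} to $\acl_{\Ltilde}(C),\acl_{\Ltilde}(F),\acl_{\Ltilde}(L)$ and reads off $\Gamma(LF)=\Gamma(L)$. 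In the inductive step the paper never needs algebraic closures: it applies the inductive hypothesis to the (not necessarily closed) field $C'F$ and then reruns the $n=1$ argument. Your proof becomes correct if you drop the global reduction, replace your $n=1$ argument (which only rules out realizations in $F$) with the paper's version (which rules them out in $\acl_{\Ltilde}(F)$), and in the inductive step apply the hypothesis with $C_1F$ rather than $(C_1F)^{\alg}$.
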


\begin{proof}
We proceed by induction on the transcendence degree, $n$, of $L$ over $C$.  

Assume $n=1$. Since $\tp_{\Ltilde}(L/C)\vdash \tp_{\Ltilde}(L/F)$, no $\ell\in L\setminus \acl_{\Ltilde}(C)$ can be embedded into $\acl_{\Ltilde}(F)$ over $C$.  For suppose that $\ell \equiv_C \ell'$. Then also $\ell \equiv_F \ell'$.  If $\ell'$ could be chosen in $\acl_{\Ltilde}(F)$, then $\ell$ would be an element of the finite set of elements realizing $\tp_{\Ltilde}(\ell'/F)$.  But this applies equally to any element of $\tp_{\Ltilde}(\ell/C)$, and hence this type has finitely many realizations.  Then $\ell$ would be in $\acl_{\Ltilde}(C)$. Hence there is no embedding of $\acl_{\Ltilde}(L)$ into $\acl_{\Ltilde}(F)$ over $\acl_{\Ltilde}(C)$, and we apply Fact  \ref{HHM2 equality of value groups} to obtain $\Gamma(\acl_{\Ltilde}(L)\acl_{\Ltilde}(F))=\Gamma(\acl_{\Ltilde}(L))$.  Recalling that we have defined $\Gamma(A)$ to be the definable closure of the value group of $A$, we have $\Gamma(LF)=\Gamma(L)$.

Assume the result for $m<n$ and suppose $L$ has transcendence degree $n$ over $C$.  Let $C\subseteq C'\subseteq L$ be such that $L$ is transcendence degree 1 over $C'$. Note that $\tp_{\Ltilde}(L/C')\vdash \tp_{\Ltilde}(L/FC')$ (since $\ell \equiv_{C'} \ell'$ implies $\ell C' \equiv_C \ell' C'$ implies $\ell C' \equiv_F \ell' C'$ implies $\ell \equiv_{C'F} \ell'$). Thus, by our inductive hypothesis, $\Gamma(C'F)=\Gamma(C')$.  Now one may repeat the argument of the $n=1$ case with $C'$ playing the role of $C$ and $C'F$ playing the role of $F$.
\end{proof}

\subsection{Regular extensions}

The following three properties of regular extensions of fields are implicit in many of our arguments. 

\begin{fact}{\cite[VIII, 4.12]{Lang}} \label{Lang}
Suppose $C$ is a field, $L$ is a regular field extension of $C$ and $M$ is any field extension of $C$, all contained in $\widetilde{\monster}$.  Then $L\aclind_C M$ implies $L$ and $M$ are linearly disjoint over $C$. 
\end{fact}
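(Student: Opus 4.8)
The plan is to reduce the statement to its two extreme cases — $M$ algebraic over $C$, and $M$ purely transcendental over $C$ and generated by a transcendence basis — and then splice these together along a tower. Throughout I would use the standard reformulation that $L$ and $M$ are linearly disjoint over $C$ exactly when the multiplication map $L\otimes_C M\to LM$ is injective, the facts that linear disjointness has finite character and is transitive in towers (for $C\subseteq F\subseteq E$ and $C\subseteq L$ inside a common field, $L$ and $E$ are linearly disjoint over $C$ as soon as $L$ and $F$ are linearly disjoint over $C$ and $LF$ and $E$ are linearly disjoint over $F$), and the characterization that $L/C$ is regular if and only if $L\otimes_C F$ is an integral domain for every field extension $F/C$. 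Recall also that $L\aclind_C M$ means $L$ and $M$ are algebraically independent (free) over $C$, which is symmetric by additivity of transcendence degree.

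\emph{The algebraic case.} Suppose $M/C$ is algebraic. For each finite subextension $C\subseteq F\subseteq M$, the ring $L\otimes_C F$ is an integral domain because $L/C$ is regular, and it is finite-dimensional over the field $L$ (of dimension $[F:C]$), hence is itself a field; the nonzero homomorphism $L\otimes_C F\to LF$ is then injective, so $L$ and $F$ are linearly disjoint over $C$. Taking the union over all such $F$ gives that $L$ and $M$ are linearly disjoint over $C$. (The hypothesis $L\aclind_C M$ costs nothing here, since a tuple from $L$ is algebraically independent over $C$ iff it is over $C^{\alg}\supseteq M$.)

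\emph{Reduction to the algebraic case.} For general $M$, choose a transcendence basis $\mathbf t$ of $M$ over $C$; since $M\aclind_C L$ (the symmetric form of the hypothesis), $\mathbf t$ stays algebraically independent over $L$, so the monomials in $\mathbf t$ are a $C$-basis of $C[\mathbf t]$ that remains linearly independent over $L$, whence $L$ and $C(\mathbf t)$ are linearly disjoint over $C$. This linear disjointness identifies $L(\mathbf t)=L\cdot C(\mathbf t)$ with $\mathrm{Frac}(L\otimes_C C(\mathbf t))$, and then for any field extension $F'/C(\mathbf t)$ the ring $L(\mathbf t)\otimes_{C(\mathbf t)}F'$ is a localization of the integral domain $L\otimes_C F'$ (using again that $L/C$ is regular), hence a domain; thus $L(\mathbf t)/C(\mathbf t)$ is regular. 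As $M$ is algebraic over $C(\mathbf t)$, the algebraic case over the base field $C(\mathbf t)$ shows that $L(\mathbf t)$ and $M$ are linearly disjoint over $C(\mathbf t)$. Feeding this together with the linear disjointness of $L$ and $C(\mathbf t)$ over $C$ into transitivity of linear disjointness along $C\subseteq C(\mathbf t)\subseteq M$ yields that $L$ and $M$ are linearly disjoint over $C$.

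\emph{On the main obstacle.} There is no genuinely hard step: once the tensor-product characterization of regularity and the transitivity of linear disjointness are in hand, the argument is soft bookkeeping. The only place the hypotheses are really used is the passage through the transcendence basis $\mathbf t$, and without the freeness $L\aclind_C M$ the conclusion fails outright — e.g.\ $L=M=C(t)$, which is regular over $C$ but not linearly disjoint from itself over $C$. If one preferred to avoid the tower bookkeeping, an alternative is a single dimension count: $L\otimes_C M$ is a domain by regularity, so the kernel of $L\otimes_C M\to LM$ is a prime $\mathfrak p$, and on any finitely generated piece one has $\dim(L_0\otimes_C M_0)=\trdeg(L_0/C)+\trdeg(M_0/C)=\trdeg(L_0M_0/C)$ by freeness, which forces $\mathfrak p$ to meet $L_0\otimes_C M_0$ trivially and hence $\mathfrak p=0$; this trades the tower argument for the dimension theory of finitely generated algebras.
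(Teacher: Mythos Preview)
Your proof is correct, but there is nothing to compare it to: the paper does not prove this statement. It is recorded as a \emph{Fact} with a citation to Lang's \emph{Algebra} (VIII, 4.12) and no argument is given. Your write-up is a clean self-contained proof of that cited fact, following the standard route (reduce to the algebraic case via a transcendence basis, use the tensor characterization of regularity for the algebraic step, and conclude by transitivity of linear disjointness). The alternative dimension-count sketch at the end is also fine. Since the paper only invokes the result as a black box, your contribution here is supplementary rather than a comparison.
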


\begin{lemma}\label{orthogonaltoacl}
Let $C$ and $L$ be valued fields contained in $\widetilde{\monster}$ such that  $C\subseteq L$ is a regular extension of fields and $L$ is henselian. Then $\tp_{\Ltilde}(L/C)\vdash \tp_{\Ltilde}(L/\acl_{\Ltilde}(C))$.
\end{lemma}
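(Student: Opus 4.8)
My plan is to deduce the statement from the construction of a suitable automorphism of $\widetilde{\monster}$, using that in $\mathrm{ACVF}$ the model-theoretic algebraic closure of a valued field $C$ is $\acl_{\Ltilde}(C) = \dcl_{\Ltilde}(\Calg)$, where $\Calg$ denotes the field-theoretic algebraic closure of $C$ carrying the valuation induced from $\widetilde{\monster}$ (this standard $\mathrm{ACVF}$ fact is the one I would need to cite). Unwinding the definition of type implication, it suffices to show that every realization $L'$ of $\tp_{\Ltilde}(L/C)$ satisfies $L' \equiv_{\acl_{\Ltilde}(C)} L$. So fix such an $L'$ and, by homogeneity of $\widetilde{\monster}$, pick $\sigma \in \Aut(\widetilde{\monster}/C)$ with $\sigma(L) = L'$; write $\phi := \sigma\restriction L \colon L \to L'$, a valued-field isomorphism fixing $C$. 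It is then enough to extend $\phi$ to an automorphism $\Psi$ of $\widetilde{\monster}$ fixing $\Calg$ pointwise, since such a $\Psi$ fixes $\acl_{\Ltilde}(C) = \dcl_{\Ltilde}(\Calg)$ pointwise and maps $L$ to $L'$.

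To build $\Psi$, I would first pass to the compositum with $\Calg$. Because $L/C$ is regular, no finite tuple of elements of $L$ that is algebraically independent over $C$ can become dependent over the algebraic extension $\Calg$, i.e. $L \aclind_C \Calg$, so by Fact~\ref{Lang} the fields $L$ and $\Calg$ are linearly disjoint over $C$; hence $L \otimes_C \Calg$ maps isomorphically onto the subring $L[\Calg]$ of $\widetilde{\monster}$, which, being a domain integral over the field $L$, is itself a field and therefore equals the compositum $\Calg L$. The same applies with $L'$ in place of $L$. Tensoring $\phi$ with the identity of $\Calg$ now yields a field isomorphism $\psi \colon \Calg L \to \Calg L'$ with $\psi\restriction L = \phi$ and $\psi\restriction \Calg = \mathrm{Id}$.

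The heart of the argument — and the step I expect to be the main obstacle — is verifying that this purely field-theoretic $\psi$ automatically respects the valuation, and this is exactly where henselianity of $L$ is used. Since every element of $\Calg$ is algebraic over $C \subseteq L$, the extension $\Calg L / L$ is algebraic; as $L$ is henselian, there is a unique valuation ring of $\Calg L$ lying over the valuation ring $\mathcal{O}_L$ of $L$, and likewise for $\Calg L'$ over $\mathcal{O}_{L'}$. Now $\phi$ is a valued-field isomorphism, so it carries $\mathcal{O}_L$ onto $\mathcal{O}_{L'}$; consequently the $\psi$-preimage of the ambient valuation ring of $\Calg L'$ is a valuation ring of $\Calg L$ lying over $\mathcal{O}_L$, hence by uniqueness equals the ambient valuation ring of $\Calg L$. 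Thus $\psi$ is an isomorphism of valued fields, so by quantifier elimination in $\mathrm{ACVF}$ (the $\Ltilde$-analogue of the remark following Proposition~\ref{qe}) it extends to an automorphism $\Psi$ of $\widetilde{\monster}$. This $\Psi$ fixes $\Calg$ pointwise, hence fixes $\acl_{\Ltilde}(C) = \dcl_{\Ltilde}(\Calg)$ pointwise, and $\Psi(L) = \phi(L) = L'$; therefore $L \equiv_{\acl_{\Ltilde}(C)} L'$, which completes the proof. The only other point requiring care is the cited identity $\acl_{\Ltilde}(C) = \dcl_{\Ltilde}(\Calg)$; everything else is routine valuation theory and field algebra.
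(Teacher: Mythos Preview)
Your proof is correct and takes a genuinely different route from the paper's. The paper argues at the level of definable sets: given a finite tuple $a\in L$ and an $\acl_{\Ltilde}(C)$-definable set $X\ni a$, it passes to the disjoint conjugates $X_1,\ldots,X_n$ of $X$ over $C$, codes the set of parameters defining $X_1$ by a field element $d_1\in\acl_{\Ltilde}(C)$, observes that $d_1\in\dcl_{\Ltilde}(Ca)\subseteq L$ (using that $\dcl_{\Ltilde}$ in the field sort is the henselization), and then invokes regularity of $L/C$ to conclude $d_1\in C$, so $X$ was $C$-definable all along. You instead work externally: for any $L'\equiv_C L$ you build an automorphism of $\widetilde{\monster}$ fixing $\Calg$ and carrying $L$ to $L'$, by tensoring $\phi$ with $\id_{\Calg}$ (linear disjointness from regularity) and then using henselianity of $L$ via the unique-extension property to see that the resulting field map respects the valuation. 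Both proofs use the same two hypotheses but in different guises: the paper uses henselianity to identify $\dcl_{\Ltilde}(Ca)$ and regularity to force an algebraic element into $C$, whereas you use regularity for linear disjointness and henselianity for uniqueness of the extended valuation. Your argument is arguably more transparent valuation-theoretically and avoids the coding step; the paper's argument stays closer to the definability framework used elsewhere and does not need the (standard but externally cited) identification $\acl_{\Ltilde}(C)=\dcl_{\Ltilde}(\Calg)$.
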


\begin{proof}
Note we may restrict our attention to the valued field sort of $\widetilde{\monster}$. Let $a\in L$ be a finite tuple. Let $X$ be an $\acl_{\Ltilde}(C)$-definable set containing $a$ and let $X=X_1,\ldots,X_n$ be the conjugates of $X$ over $C$. We may assume that the $X_i$'s are pairwise disjoint (consider the boolean algebra generated by the $X_i$'s and replace $X$ by the atom containing $a$).

Suppose that $X_1$ is defined by $\varphi(x,b)$ with $b\in \acl_{\Ltilde}(C)$.  Consider the set $B$ of conjugates $\{b=b_1, \dots, b_k\}$ of $b$ over $C$, noting that $k$ could be larger than $n$.  Let $S_1$ be the subset of $B$ consisting of those $b_i$ such that $\varphi(x,b_i)$ defines $X_1$.  Since fields code finite sets, there is a tuple $d_1\in \acl_{\Ltilde}(C)$ that is a code for $S_1$.  Consider the conjugates $D=\{d_1, \dots, d_n\}$ of $d_1$ over $C$.  Note that $X_1$ is definable over $d_1$, so it suffices to show that $d_1\in C$.

Since $D$ is $\Ltilde$-definable over $C$, $d_1$ is $\Ltilde$-definable over $Ca$.  Since in an algebraically closed valued field of characteristic 0, the definable closure of a set of field elements is the henselization of the field generated by those elements, $d_1$ is in the henselian closure of $Ca$, which is included in $L$. Since $L$ is a regular extension of $C$ and $d_1$ is algebraic over $C$, we conclude $d_1\in C$ and hence $X$ is ${\Ltilde}$-definable over $C$.

\end{proof}

\begin{lemma}\label{unique extension implies linearly disjoint}
Let $C, F$ and $L$ be valued fields contained in $\widetilde{\monster}$ such that  $C\subseteq F\cap L$, $L$ is a regular extension of $F$, $\tp_{\Ltilde}(L/C)\vdash \tp_{\Ltilde}(L/F)$, and $C$ is not trivially valued. Then $L$ and $F$ are linearly disjoint over $C$.
\end{lemma}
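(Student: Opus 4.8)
The plan is to deduce the statement from Fact~\ref{Lang}. It is enough to establish two things: that $L$ is a regular extension of $C$, and that $L\aclind_C F$ (i.e.\ every finite tuple from $L$ that is algebraically independent over $C$ remains so over $F$). Once both hold, Fact~\ref{Lang} gives at once that $L$ and $F$ are linearly disjoint over $C$.

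For $L\aclind_C F$ I would use the hypothesis $\tp_{\Ltilde}(L/C)\vdash\tp_{\Ltilde}(L/F)$ via a sufficiently generic automorphism. Let $\bar\ell$ be a finite tuple from $L$, algebraically independent over $C$. Choose $\sigma\in\Aut(\widetilde{\monster}/C)$ with $\sigma(L)$ algebraically independent from $F$ over $C$ (such $\sigma$ exist: a free amalgam of $L$ and $F$ over $C$ realises $\tp_{\Ltilde}(L/C)$ while keeping the transcendence degree over $F$ equal to that over $C$). Then $\sigma(\bar\ell)\equiv_C\bar\ell$, hence $\sigma(\bar\ell)\equiv_F\bar\ell$ by the hypothesis; since algebraic independence over $F$ of a tuple depends only on its type over $F$, and $\sigma(\bar\ell)$ is algebraically independent over $F$, so is $\bar\ell$. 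Thus $L\aclind_C F$.

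For regularity of $L$ over $C$ --- the step I expect to be the main obstacle --- it suffices, since we are in characteristic $0$, to show that $C$ is relatively algebraically closed in $L$. Let $c\in L$ be algebraic over $C$. As $C\subseteq F$ and $L$ is a \emph{regular} extension of $F$, the field $F$ is relatively algebraically closed in $L$, so $c\in F$; then $(x=c)\in\tp_{\Ltilde}(c/F)$, and $\tp_{\Ltilde}(c/C)\vdash\tp_{\Ltilde}(c/F)$ forces $\tp_{\Ltilde}(c/C)\vdash(x=c)$, i.e.\ $c\in\dcl_{\Ltilde}(C)$, the henselisation of $C$ in the field sort. The remaining point --- descending from the henselisation of $C$ to $C$ itself --- is the delicate one, and I expect the hypothesis that $C$ is not trivially valued to be used precisely here (with Lemma~\ref{orthogonaltoacl}, or a direct argument about how the henselisation of $C$ sits inside $L$, doing the work). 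Granting this, $L/C$ is regular, and Fact~\ref{Lang} together with the second paragraph completes the proof.
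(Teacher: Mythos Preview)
Your step~(1) has a real gap that you flag but cannot close: from $c\in\dcl_{\Ltilde}(C)$ (the henselisation $C^h$) one cannot in general descend to $c\in C$, since $C$ need not be henselian. Invoking Lemma~\ref{orthogonaltoacl} would be circular, as that lemma already assumes $L$ regular over $C$; and the hypothesis that $C$ is not trivially valued gives no leverage here either.

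The likely explanation is a typo in the statement: ``$L$ is a regular extension of $F$'' should read ``$L$ is a regular extension of $C$''. The paper's own proof uses regularity of $L$ over $C$ explicitly in its last line, and the lemma's only application (in the proof of Proposition~\ref{dominationtoimmediateimpliesseparated}) has $L$ regular over $C$ as a standing assumption. With that correction your step~(1) is vacuous and your step~(2) yields a correct proof via Fact~\ref{Lang}.

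The paper takes a different, more hands-on route. Given a nontrivial relation $\vec\ell\cdot\vec f=0$ with $\vec\ell\in L$ and $\vec f\in F$, it flips to $\tp_{\Ltilde}(F/C)\vdash\tp_{\Ltilde}(F/L)$ via Lemma~\ref{type implications}(i), then uses that $\acl_{\Ltilde}(C)$ is a model (this is precisely where ``$C$ not trivially valued'' enters) to realise the implied $C$-formula by some $\vec d\in C^{\alg}$ with $\vec\ell\cdot\vec d=0$; regularity of $L/C$ (equivalently, linear disjointness of $L$ from $C^{\alg}$ over $C$) then pulls the relation down to $C$. Your argument trades this for a saturation step producing an algebraically free conjugate of $L$ over $C$; a side benefit is that, once the hypothesis is corrected, your route does not seem to require the non-trivial valuation on $C$ at all.
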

\begin{proof}
By Lemma~\ref{type implications}(i), since $\tp_{\Ltilde}(L/C)\vdash \tp_{\Ltilde}(L/F)$ then also $\tp_{\Ltilde}(F/C)\vdash \tp_{\Ltilde}(F/L)$. Suppose that there is $\vec \ell\in L$ and $\vec f \in F$ such that $\vec \ell\cdot \vec f = 0$ with $\vec \ell, \vec f \neq 0$, and let $\varphi(\vec x, \vec \ell)$ express this of $\vec x$.  As $\varphi(\vec x, \vec \ell)\in \tp_{\Ltilde}(F/L)$ it is implied by some formula $\psi(\vec x,c)\in \tp_{\Ltilde}(F/C)$.  As $\acl_{\Ltilde}(C)$ is a model there is some $\vec d\in \acl_{\Ltilde}(C)$ so that $\psi(\vec d,c)$.  Hence, $\varphi(\vec d, \vec \ell)$ holds, i.e. $\vec \ell \cdot \vec d=0$ and $\vec d \neq 0$.  
Note that $C\subseteq L$ is a regular extension of fields (in characteristic 0) if and only if $L$ is linearly disjoint from $\acl_{\Ltilde}(C)$ over $C$.  So there must also be $\vec c\in C$ with $\vec \ell\cdot \vec c = 0$.
\end{proof}

\section{Separated bases}

The notion of a good separated basis was isolated in \cite{HHM2}, based on earlier observations by many different authors. In this section, we show that a field extension can often be assumed to have the separated basis property and that some type implications imply that the property can be lifted to a larger underlying field. In the subsequent section, we deduce strong consequences towards domination results from the separated basis property.  Many results in earlier papers on domination used the assumption that the base $C$ is maximal. Recall that a valued field is maximal (also called maximally complete or spherically complete) if it has no proper immediate extension. Here we show that this assumption can be replaced by the weaker assumption that there is  a good separated basis over $C$. 

\begin{defi}
Let $M$ be a valued field extension of $C$. Let $V\subseteq M$ be a $C$-vector space.  Let $m_1,\ldots, m_k$ be elements of $V$, $\vec{m}=(m_1,\ldots,m_k)$ and write $C\cdot \vec{m}$ for the
$C$-vector subspace of $V$ generated by $m_1,\ldots,m_k$. We say that $\{m_1,\ldots,m_k\}$ is a {\em separated basis over $C$}
if for all $c_1,\ldots, c_k$ in $C$, 
\[ v(\sum_{i=1}^k c_im_i ) = \min\{v(c_im_i): 1\le i\le k\}
\]
(and so, in particular, forms a basis for $C\cdot \vec{m}$).
We say that the separated basis is {\em good} if in addition for all $1\le i,j\le k$, either $v(m_i)=v(m_j)$ or $v(m_i)-v(m_j) \notin \Gamma_C$.
We say that $V$ has the {\em (good) separated basis property over $C$ } if every finite-dimensional $C$-subspace of $V$ has a (good) 
separated basis.
\end{defi}

By the next two lemmas, if the base $C$ is either maximal or trivially valued, then any field extension has the good separated basis property.

\begin{lemma}\cite[Proposition 12.1]{HHM2} \label{maximalimpliesgoodseparated}
Let $C$ be a non-trivially valued maximal field and $M$ a valued field extension. Then $M$ has the good separated basis property over $C$.
\end{lemma}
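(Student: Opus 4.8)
The plan is to establish the statement in two stages: first, that every finite-dimensional $C$-subspace $V\subseteq M$ admits a \emph{separated} basis, and second, that any separated basis can be rescaled into a \emph{good} one. The separated basis will be produced by an orthogonalization argument, by induction on $\dim_C V$, and the only place maximality of $C$ enters is to guarantee that best approximations inside a subspace are attained. The inductive engine is the auxiliary observation that a finite-dimensional $C$-space equipped with a separated basis $m_1,\dots,m_r$ is, as a valued $C$-module, isometric to $C^r$ with the ``min'' valuation $\sum c_i m_i \mapsto \min_i\bigl(v(c_i)+v(m_i)\bigr)$, and that such a space is spherically complete whenever $C$ is: a ball in it is a product of balls of a common radius, so any chain of balls has nonempty intersection, computed coordinatewise.

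Granting this, suppose $V$ has dimension $k$, fix any hyperplane $V'\subset V$ (spanned, say, by all but one vector of an arbitrary basis) which by the inductive hypothesis has a separated basis $m_1,\dots,m_{k-1}$, and pick $m_k\in V\setminus V'$. Set $s=\sup\{v(m_k-v'):v'\in V'\}$. Using spherical completeness of $V'$ one shows $s$ is finite and attained at some $v_0\in V'$: were it $+\infty$ or an unattained supremum, a strictly increasing cofinal sequence of values $v(m_k-v'_n)$ would produce a descending chain of balls in $V'$ whose common point $v'_*$ satisfies $v(m_k-v'_*)\ge s$, contradicting the choice of $s$ (or forcing $m_k\in V'$). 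Put $m_k'=m_k-v_0$, so $v(m_k')=s$ and $v(m_k'-u)\le s$ for all $u\in V'$. An elementary ultrametric case analysis according to whether $v(v')$ is less than, equal to, or greater than $s$ then gives $v(m_k'-v')=\min\bigl(v(m_k'),v(v')\bigr)$ for all $v'\in V'$ --- ``orthogonality'' of $m_k'$ to $V'$. From this, for any $c_1,\dots,c_k\in C$ one computes $v\bigl(\sum_{i<k}c_im_i+c_km_k'\bigr)=\min\bigl(v(c_km_k'),\,\min_{i<k}v(c_im_i)\bigr)$, using separatedness of $m_1,\dots,m_{k-1}$ for the inner minimum; hence $\{m_1,\dots,m_{k-1},m_k'\}$ is a separated basis of $V$, completing the induction.

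For the upgrade, given a separated basis $\{m_1,\dots,m_k\}$, partition the indices by $i\sim j\iff v(m_i)-v(m_j)\in\Gamma_C$; in each class fix a representative $r$, and for each $j\sim r$ choose $c_j\in C^\times$ with $v(c_j)=v(m_j)-v(m_r)$ and replace $m_j$ by $c_j^{-1}m_j$. Rescaling basis vectors by units visibly preserves the separated property, and afterwards any two basis vectors in a common class have equal valuation, while two in different classes have valuation difference outside $\Gamma_C$ (that difference is congruent mod $\Gamma_C$ to the difference of the two class representatives, which lies outside $\Gamma_C$ by construction); so the basis is good. I expect the main obstacle to be bookkeeping the apparent circularity: the orthogonalization step needs $V'$ spherically complete, yet that is only available once $V'$ already carries a separated basis, so ``spherical completeness of finite-dimensional $C$-subspaces'' must be threaded through the induction rather than invoked at the outset.
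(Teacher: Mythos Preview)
Your argument is correct and follows essentially the same route as the cited reference \cite[Proposition~12.1]{HHM2}: induct on the dimension, use maximality of $C$ to produce a closest element of the hyperplane $V'$ to the new vector (this is exactly ``the claim in the proof of \cite[Proposition~12.1]{HHM2}'' that the present paper invokes later in Proposition~\ref{dominationtoimmediateimpliesseparated}), verify by the ultrametric case split that the resulting vector is orthogonal to $V'$, and finally rescale within $\Gamma_C$-cosets to upgrade from separated to good (the step the paper attributes to \cite[Lemma~12.2]{HHM2}). Your remark that spherical completeness of $V'$ must be carried along inside the induction, rather than assumed globally, is the right way to organize the argument; one cosmetic point is that the coordinate balls in your product description have radii $\gamma - v(m_i)$ rather than a common radius, but this does not affect the conclusion.
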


\begin{lemma}\label{trivially-valued-base}
Let $C$ be a trivially valued field, and $M$ a non-trivially valued field extension. Then $M$ has the good separated basis property over $C$. 
\end{lemma}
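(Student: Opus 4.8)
The plan is to take a finite-dimensional $C$-subspace $V \subseteq M$ and produce a good separated basis by working with a carefully chosen basis adapted to the valuation. Since $C$ is trivially valued, $v$ vanishes on $C^\times$, so for any $c \in C^\times$ and $m \in M$ we have $v(cm) = v(m)$; thus the separated basis condition $v(\sum c_i m_i) = \min_i v(c_i m_i)$ simplifies to $v(\sum c_i m_i) = \min\{v(m_i) : c_i \neq 0\}$, and the goodness condition becomes: for all $i,j$, either $v(m_i) = v(m_j)$ or $v(m_i) - v(m_j) \notin \Gamma_C = \{0\}$, i.e. the condition holds automatically as soon as the $m_i$ have distinct valuations. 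So the goal reduces to: every finite-dimensional $C$-subspace $V$ has a basis $m_1, \dots, m_k$ such that $v(\sum_i c_i m_i) = \min\{v(m_i) : c_i \neq 0\}$ for all $c_i \in C$.

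First I would reduce to the case where the $m_i$ can be chosen with pairwise distinct valuations, which then gives goodness for free. The key algebraic fact is that $\Gamma_M / \Gamma_C = \Gamma_M$ (since $\Gamma_C = 0$), together with the observation that the residue field $k_C$ embeds into $k_M$. Concretely: order a basis $x_1, \dots, x_k$ of $V$ so that, after relabelling, the distinct values among $v(x_1), \dots, v(x_k)$ partition the index set into blocks. Within a block where several basis vectors share the same valuation $\gamma$, one uses the fact that $C$ is a \emph{field}: if $v(x_i) = v(x_j) = \gamma$ and $\res(x_i x_j^{-1}) \in k_C$, one can subtract a $C$-multiple of $x_j$ from $x_i$ to strictly raise $v(x_i)$; iterating (this is a finite process since dimension is finite and at each step one either separates valuations or strictly increases a valuation, and the valuations that appear all lie in the finitely-generated-over-$\Gamma_C$... more carefully, a Gram–Schmidt-style argument) one arrives at a basis whose elements have pairwise distinct valuations or whose ``residues'' are $k_C$-linearly independent. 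Then I would verify that such a basis is separated: if $\sum c_i m_i$ has $v(\sum c_i m_i) > \min_i v(c_i m_i) = \min_i v(m_i)$ (over the $i$ with $c_i \neq 0$), then collecting the terms of minimal valuation $\gamma$ and dividing by an element of value $\gamma$, one gets a $k_C$-linear dependence among the corresponding residues, contradicting either the distinctness of valuations or the chosen $k_C$-independence.

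The main obstacle I expect is handling the blocks of basis vectors with equal valuation cleanly: one must ensure the residues $\res(x_i / t)$ (for $t$ of the common value) can be taken $k_C$-linearly independent, and that the process of clearing dependencies terminates. This is where triviality of the valuation on $C$ is really used — it guarantees that $k_C = \{ \res(c) : c \in C, v(c) = 0\} = C$ maps injectively and that the relevant linear algebra takes place over the genuine field $k_C$ rather than over a residue ring. I would phrase this as an induction on $\dim_C V$: pick $m_1 \in V$ of minimal valuation, let $V_1 = \{ v \in V : v = 0 \text{ or } v(v) > v(m_1) \text{ or } \res(v/m_1') \notin k_C \cdot 1 \text{ suitably}\}$ — actually more robustly, let $W$ be a complement obtained by subtracting off the $C$-span of $m_1$ after the clearing step, apply the inductive hypothesis to $W$, and check the union of bases remains separated using minimality of $v(m_1)$. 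Alternatively, and perhaps most cleanly, one can quote or mimic the standard fact that a trivially valued field is maximal in a degenerate sense and that over a maximal field all extensions are separated (Lemma~\ref{maximalimpliesgoodseparated}), adapting its proof to bypass the non-triviality hypothesis; the only place non-triviality enters that proof is in the pigeonhole/spherical-completeness step, which is vacuous here since $\Gamma_C$ has a single element.
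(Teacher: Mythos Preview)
Your reduction of goodness to a vacuous condition (since $\Gamma_C = \{0\}$) is correct and matches the paper, and the broad inductive strategy is also right. The gap is in termination: you assert the Gram--Schmidt clearing is ``a finite process since dimension is finite and at each step one strictly increases a valuation,'' but strictly increasing a value in an ordered group need not terminate by itself. The missing observation is that $v(V \setminus \{0\})$ has at most $\dim_C V$ elements --- elements of $V$ with pairwise distinct valuations are automatically $C$-linearly independent, since in any dependence the term of minimal valuation cannot be cancelled (every nonzero $c \in C$ has $v(c)=0$). With this lemma your Gram--Schmidt terminates; without it the argument is incomplete, and your trailing ``the valuations that appear all lie in the finitely-generated-over-$\Gamma_C$\ldots'' suggests you sensed this but did not pin it down. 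Your alternative ``pick $m_1$ of minimal valuation and recurse on a complement $W$'' has a separate problem: a separated basis of an \emph{arbitrary} complement may contain some $w$ with $v(w) = v(m_1)$ and $\res(w/m_1) \in k_C$, and then $\{m_1, w, \ldots\}$ fails separation. (This is repairable by choosing $W \supseteq V^+_{v(m_1)}$, but you did not say so.)

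The paper sidesteps both issues by running the induction in the other direction: given a separated $\{m_1, \ldots, m_\ell\}$ with $v(m_1) \le \cdots \le v(m_\ell)$, one notes that every nonzero $x \in C\cdot\vec m$ has $v(x) \in \{v(m_1), \ldots, v(m_\ell)\}$. To extend to a larger $V$, either some $m \in V \setminus C\cdot\vec m$ has valuation outside this finite set --- then $\{m_1, \ldots, m_\ell, m\}$ is separated since the new term can never tie an old one --- or every such $m$ has valuation in the set, in which case one picks $m$ with $v(m) = v(m_{i_0})$ for $i_0$ maximal among indices so attained, and checks that any failure of separatedness would produce an element of $V \setminus C\cdot\vec m$ of valuation strictly greater than $v(m_{i_0})$, contradicting the choice of $i_0$. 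No closest-point or maximality-of-$C$ argument is invoked.
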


\begin{proof}
Since $v(c)=0$ for every $c\in C$, the condition for being good is vacuous. To construct separated bases, let $V$ be a finite-dimensional $C$-subspace of $M$, and proceed by induction on $\dim(V)$. If $\dim(V)=1$ then any basis is automatically separated. 

Assume the result is true for any $\ell$-dimensional subspace, and let $\{m_1,\ldots, m_\ell\}$ be a separated basis for $C\cdot \vec{m}$, the vector space that $\vec{m}=(m_1,\ldots,m_\ell)$ generates over $C$. Assume without loss of generality that $v(m_1)\le v(m_2)\cdots \le v(m_\ell)$. Notice that, for all $m\in\ C\cdot\vec{m}$, $v(m)\in\{v(m_1),\ldots, v(m_\ell)\}$. First suppose there is $m \in V\setminus C\cdot\vec{m}$ with $v(m)\notin\{v(m_1),\ldots, v(m_\ell)\}$. Then $\{m_1,\ldots,m_\ell,m\}$ is linearly independent and 
is separated. For suppose not. Then there are $c_1,\ldots,c_{\ell+1}$ such that $v(c_{\ell+1}m) = v(\sum_{i=1}^\ell c_i m_i)$. Since $v(c_{\ell+1}m) =v(m)$ and $v(\sum_{i=1}^\ell c_i m_i)\in \{v(m_1),\ldots, v(m_\ell)\}$, this contradicts the hypothesis on $m$.

Now suppose there is no such $m$.  Let $i_0$ be the greatest $i\le \ell$ for which there is $m\in V\setminus C\cdot \vec{m}$ with $v(m)=v(m_{i_0})$. We claim that $\{m_1,\ldots, m_\ell, m\}$ is a separated basis. Suppose not.  Then there are some $c_1,\ldots, c_\ell,c_{\ell+1}$ for which the valuation of the sum is not given by the minimum.  Write $I=\{i: v(m_i)=v(m_{i_0})\}$.   In particular we must have (by induction) 
\[  v(\sum_{i\in I}c_im_i + c_{\ell+1}m) > v(m_{i_0})
\]
and $c_{\ell+1}\neq 0$.  But then $\tilde m = \sum_{i\in I}c_im_i + c_{\ell+1}m$ must have valuation that is not among the valuations of $m_1, \dots, m_\ell$, or it must have valuation equal to $v(m_k)$ with $k>i_0$, which in either case contradicts our choice of $m$.

\end{proof}

\begin{prop}\label{dominationtoimmediateimpliesseparated}
Let $C$ be a field and $L$ a regular extension. Assume there is $F$  a maximal immediate extension of $\Calg$  such that $\tp_{\Ltilde}(L/C)\vdash \tp_{\Ltilde}(L/F)$.  Then $L$ has the good separated basis property over $C$.  Moreover, if $C'$ is any algebraically closed field  with $C\subseteq C' \subseteq F$, then the $C'$-vector space generated by $L$ inside $LF$ also has the good separated basis property over $C'$.
\end{prop}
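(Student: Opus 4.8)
The plan is to reduce the assertion over a general regular extension $C \subseteq L$ to the already-known case of maximal (or trivially valued) bases by lifting the picture to $\widetilde{\monster}$ and using $F$, which *is* maximal. First I would observe that $F$ is a maximal field (being a maximal immediate extension of $\Calg$), so by Lemma~\ref{maximalimpliesgoodseparated} — or Lemma~\ref{trivially-valued-base} if $\Calg$ happens to be trivially valued — any valued field extension of $F$, in particular $LF$, has the good separated basis property over $F$. The goal is then to transport a separated basis defined over $F$ down to one defined over $C$. This is where the type implication $\tp_{\Ltilde}(L/C) \vdash \tp_{\Ltilde}(L/F)$ enters, together with regularity of $L/C$.

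Concretely, take a finite-dimensional $C$-subspace $V = C \cdot \vec{m}$ of $L$ with $\vec m = (m_1,\dots,m_k)$. After replacing $\vec m$ by a new tuple spanning the same space, I want to arrange that $\{m_1,\dots,m_k\}$ is a good separated basis over $C$. Since $L$ is linearly disjoint from $F$ over $C$ (this is exactly Lemma~\ref{unique extension implies linearly disjoint}, using that $C$ is not trivially valued — and in the trivially valued case Lemma~\ref{trivially-valued-base} already gives the conclusion directly), the $m_i$ remain $F$-linearly independent, so they span a $k$-dimensional $F$-subspace of $LF$. By the maximal-base case there is a good separated basis of that $F$-space; but I actually want the \emph{given} $m_i$ to already be separated. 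The standard trick is: the property ``$\vec m$ is a separated basis over $F$'' is governed by finitely many conditions on the valuations $v(\sum c_i m_i)$ for $c_i \in F$; by stable embeddedness of $\Gamma$ and linear disjointness, whether $\vec m$ is separated over $F$ is detected already over $C$. More robustly, I would run the same inductive extraction as in the proof of Lemma~\ref{trivially-valued-base} / Proposition~12.1 of \cite{HHM2}, but noting that every linear-algebra choice one is forced to make can be made with coefficients in $C$: one uses the type implication to transfer the existence of a vector with prescribed valuation behaviour from $F$-combinations back to $C$-combinations. Running this induction produces a good separated basis of $V$ with entries in $C\cdot\vec m$, i.e. $V$ has the good separated basis property over $C$.

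For the ``moreover'' clause, let $C'$ be algebraically closed with $C \subseteq C' \subseteq F$, and let $V' = C' \cdot \vec\ell$ be a finite-dimensional $C'$-subspace of the $C'$-span of $L$ inside $LF$, so each $\ell_j \in C' \cdot L$. By linear disjointness of $L$ and $F$ (hence of $L$ and $C'$) over $C$, one can write the $\ell_j$ in terms of an $L$-basis with coefficients in $C'$, reducing to the case where $V'$ is spanned by elements of $L$; then $V' = C' \cdot \vec m$ for some $\vec m \in L$. Now apply the type implication in the stronger form: one has $\tp_{\Ltilde}(L/C') \vdash \tp_{\Ltilde}(L/F)$ as well (if $\vec\ell \equiv_{C'} \vec\ell'$ then $\vec\ell C' \equiv_C \vec\ell' C'$, hence $\equiv_F$, hence $\vec\ell \equiv_{C'F} \vec\ell'$ — compare the inductive step of Lemma~\ref{Gamma(LF) is Gamma(L)}), and $L$ remains a regular extension of $C'$ since $C'$ is algebraically closed. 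So the first part of the proposition applies verbatim with $C'$ in place of $C$, giving a good separated basis of $V'$ over $C'$.

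The main obstacle I expect is the transfer step in the second paragraph: going from ``$LF$ has a good separated basis over $F$'' to ``the specific $C$-linearly-independent tuple $\vec m$ can be completed/adjusted to a separated basis over $C$'' is not purely formal, because separatedness over $F$ allows coefficients from all of $F$, not just $C$. The resolution is that linear disjointness of $L$ and $F$ over $C$ forces any $F$-linear relation among the $m_i$ to already be a $C$-linear relation, and the valuation of a $C$-linear combination is computed the same way whether one views it inside $L$ or inside $LF$; combined with the type implication (which lets one pull the existence of auxiliary vectors with controlled valuation from $F$-span back to $C$-span, exactly as in the inductive arguments of Lemmas~\ref{orthogonaltoacl} and \ref{unique extension implies linearly disjoint}), this closes the gap. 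Getting the bookkeeping of the induction right — in particular the ``good'' condition $v(m_i) - v(m_j) \notin \Gamma_C$ versus $\notin \Gamma_F$ — will require care, since $\Gamma_C$ may be strictly smaller than $\Gamma_F$; but because $F$ is immediate over $\Calg$ we have $\Gamma_F = \Gamma_{\Calg}$, and regularity controls how $\Gamma_C$ sits inside $\Gamma_{\Calg}$, so the two notions of ``good'' agree on elements of $L$.
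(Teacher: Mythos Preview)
Your overall plan --- lift to $F$, use maximality there, and descend back to $C$ via the type implication and linear disjointness --- matches the paper's strategy, and your handling of the ``moreover'' clause (replace $C$ by $C'$ and rerun the argument, noting that $F$ is still a maximal immediate extension of $(C')^{\alg}=C'$ and that $\tp_{\Ltilde}(L/C')\vdash\tp_{\Ltilde}(L/F)$ follows from the original implication) is correct and in fact slightly cleaner than the paper's simultaneous induction.

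The genuine gap is exactly where you flag it: the transfer step. You write that ``the type implication lets one pull the existence of auxiliary vectors with controlled valuation from $F$-span back to $C$-span,'' but this is not a mechanism. The implication $\tp_{\Ltilde}(L/C)\vdash\tp_{\Ltilde}(L/F)$ (equivalently $\tp_{\Ltilde}(F/C)\vdash\tp_{\Ltilde}(F/L)$) tells you that any $\vec b'\equiv_C \vec b$ also satisfies the $L$-formula $v(\sum x_i\ell_i-\ell_{n+1})=\gamma$; it does \emph{not} produce a realization of that formula inside $C^n$. Linear disjointness likewise only transfers linear \emph{relations}, not valuation conditions. So as written, the descent fails.

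The paper closes this gap with two substantive ideas you are missing. First, after using maximality of $F$ to find a closest point $\sum b_i\ell_i$ in $F\cdot\vec\ell$ to $\ell_{n+1}$ (and checking via Lemma~\ref{Gamma(LF) is Gamma(L)} that the optimal distance $\gamma$ lies in $\Gamma(L)$), one reduces the transcendence degree of $(b_1,\ldots,b_n)$ over $C$ by a swiss-cheese argument: the solution set of the relevant $L$-formula in one transcendental coordinate is a finite union of swiss cheeses, and the type implication forces the one containing $b_1$ to meet the smaller algebraically closed field $C'$ over which the remaining $b_i$ are algebraic. Iterating drops the coefficients into $\Calg$. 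Second, one averages the $\Calg$-solution over its $\Aut(\Calg/C)$-conjugates (here equicharacteristic~$0$ is used), observing via the type implication that each conjugate is still optimal, so the average lies in $C\cdot\vec\ell$ and still achieves $\gamma$. Only then does the standard $\ell'_{n+1}=\ell_{n+1}-b''$ adjustment yield a separated basis, and the ``good'' condition is arranged afterwards as in \cite[Lemma~12.2]{HHM2}. Neither step is supplied by your proposal.
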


\begin{proof} 
If $C$ is trivially valued, then the conclusion follows immediately from Lemma \ref{trivially-valued-base}. So assume that $C$ is not trivially valued.

The proof is by induction on the dimension of a finitely generated vector subspace of $L$ over $C$. The base case is immediate, so assume for the induction hypothesis that $\ell_1,\ldots,l_{n+1}$  are linearly independent over $C$ and that $\vec \ell = (\ell_1, \dots, \ell_n)$ is a good separated basis not only for the space it generates over $C$ but also for the space it generates over any algebraically closed $C'$ with $C\subseteq C'\subseteq F$. By Lemma~\ref{unique extension implies linearly disjoint}, $\ell_1,\ldots,l_{n+1}$ are linearly independent over $F$. As $F$ is maximal (see the claim in the proof of \cite[Proposition 12.1]{HHM2}), there is a closest element of $F\cdot \vec\ell$ to $\ell_{n+1}$; say 
\[
	v(\sum_{i=1}^n b_i\ell_i - \ell_{n+1})= \gamma
\]
realizes this maximal valuation.  Note that $\Gamma_F=\Gamma_{\Calg}$ by choice of $F$ and that $\Gamma(C)=\dcl_{\Ltilde}(C)\cap \Gamma =\Gamma(\Calg)$.  Thus we may apply Lemma \ref{Gamma(LF) is Gamma(L)} to see that $\Gamma(LF) =\Gamma(L)$, and hence $\gamma\in\Gamma(L)$. 
In fact, applying Lemma \ref{Gamma(LF) is Gamma(L)} with $L$ replaced by $L_0= C(\ell_1, \dots, \ell_{n+1})$ one sees that $\gamma \in \dcl_{\Ltilde}(C(\ell_1, \dots, \ell_{n+1}))$.

\begin{claim}
There is $b'\in \Calg\cdot\vec\ell$ with $v(b'-\ell_{n+1})=\gamma$. 
\end{claim}

\begin{proof of claim}
Let $k=\trdeg(b_1,\ldots,b_n/C)$, assume that $k$ is the minimum transcendence degree of any tuple in $\vec{d}\in F$ such that $v(\vec{d}\cdot\vec{\ell} -\ell_{n+1}) = \gamma$ and assume for contradiction that $k\ge 1$. Fix an algebraically closed $C'\subseteq C(b_1,\ldots,b_n)^{\mathrm alg}$ such that 
$\trdeg(C'(b_1,\ldots,b_n)/C')=1$ and, without loss of generality, assume that $b_1\notin C'$, that $b_2, \dots, b_k\in C'$ are algebraically independent over $C$, and that $\psi(b_1, \dots, b_k, x_{k+1}, \dots, x_n)$ is a formula which holds of $b_{k+1}, \dots, b_n$ and implies the algebraicity of $b_{k+1}, \dots, b_n$ over $C, b_1, \dots, b_k$.

Note that $b_1$ is also transcendental over $C'L$. For, 
 since $\tp(L/C')\vdash\tp(L/F)$, we have that $\tp(LC'/C')\vdash\tp(LC'/F)$ and so  $\tp(F/C')\vdash \tp(F/C'L)$ by Lemma~\ref{type implications}.  Hence if $b_1$ were algebraic over $C'L$, it would also be algebraic over $C'$, which it is not.

Let $\varphi(x_1)$ be the formula over $C'\ell_1\dots \ell_{n+1}$ 
\[\exists x_{k+1}\dots \exists x_n (v(x_1\ell_1+\sum_{i=2}^k b_i\ell_i +\sum_{i=k+1}^n x_i\ell_i- \ell_{n+1} )= \gamma ) \land \psi(x_1, b_2, \dots, b_k, x_{k+1}, \dots, x_n).
\] 
Since $\varphi(b_1)$ holds, and $b_1$ is not algebraic over $C'\vec\ell \ell_{n+1}$, we may assume that $\varphi(x_1)$ defines a finite union of $\acl_{\Ltilde}(C'\vec\ell \ell_{n+1})$-definable swiss cheeses. Suppose for contradiction the swiss cheese containing $b_1$ does not intersect $C'$. 

First note that all the points contained in it have the same $\Ltilde$-type over $C'$. For suppose not. Then the outer ball of the swiss cheese contains a $C'$-definable closed ball of radius $\beta$. This closed ball contains infinitely many points of $C'$ of distance $\beta$ apart, which therefore cannot all be contained in the excluded balls of the swiss cheese, and hence at least one satisfies $\varphi$.  It follows in particular that all extensions of $C'$ generated by an element of this swiss cheese are isomorphic over $C'$.

There is  
$d\in \acl_{\Ltilde}(C'\vec\ell \ell_{n+1})$ realizing $\varphi(x_1)$, since this is a model. Since $\tp(d/C')=\tp(b_1/C')$ and $\tp(b_1/C')\vdash \tp(b_1/C'L)$, we have that $\tp(d/C'L)=\tp(b_1/C'L)$. 
 However, the extension $C'(d)$,  cannot be isomorphic over $C'\vec\ell \ell_{n+1}$ to $C'(b_1)$, as $b_1$ is transcendental over $C'(\vec\ell \ell_{n+1})$. 
 
 This contradiction shows that there is $b'_1\in C'$ realizing $\varphi(x_1)$ and hence also $b'_{k+1},\ldots,b'_n$ such that 
 \[v(b'_1\ell_1+\sum_{i=2}^k b_i\ell_i +\sum_{i=k+1}^n b'_i\ell_i- \ell_{n+1} )= \gamma 
\]
Since $\psi(b'_1, b_2, \dots, b_k, x_{k+1}, \dots, x_n)$ holds of $b'_{k+1},\ldots,b'_n$, it follows that $b'_{k+1},\ldots,b'_n \in C'$.   Thus $b_1',b_2,\ldots,b_k,b'_{k+1},\ldots,b'_n$ is a tuple in $C'$ which witnesses the contradiction with the definition of $k$. 
\end{proof of claim}

\begin{claim}
There is $b''\in C\cdot \vec\ell $ with $v(b''-\ell_{n+1})=\gamma$. 
\end{claim}
\begin{proof of claim}
We have $b' = \sum_{i=1}^n b'_i\ell_i \in\Calg \cdot \vec\ell$ with $v(b'-\ell_{n+1})=\gamma$. Let $\Aut(\Calg/C)$ act on $b'_1,\ldots, b'_n$  and let 
$b^1=b',\ldots,b^{m}$ be the conjugates of $b'$ under this action. As $\tp_{\Ltilde}(L/C)\vdash \tp_{\Ltilde}(L/\Calg)$ by assumption, and thus $\tp(\Calg/C)\vdash \tp(\Calg/L)$, we have that for every $j<m$, $v(b^j-\ell_{n+1})=\gamma$. Let $b''=\frac 1 m \sum_{j\le m} b^j$ (using the equicharacteristic 0 assumption). Then
\[
    v(b''-\ell_{n+1}) = v(\frac 1 m \sum_{j\le m} (b^j - \ell_{n+1})) = \min_{j\le m}\{ v(b^j - \ell_{n+1})\} = \gamma
\]
as the valuation cannot be greater than $\gamma$, by its definition.
\end{proof of claim}

Now the argument is a straightforward calculation, as in  \cite[Proposition 12.1]{HHM2}. We let
$\ell'_{n+1} = \ell_{n+1}-b''$. Then $(\vec\ell,\ell'_{n+1})$ is a separated basis for the space it generates over $F$ and hence
also for the space generated over any subset of $F$, in particular for any $C'$ with $C\subseteq C'\subseteq F$. Then, as in \cite[Lemma 12.2]{HHM2}, the basis can be made into a good separated basis.
\end{proof}

As a corollary, we can show that the good separated basis property follows from stable domination. In the next section, we will prove that this characterizes stable domination. 

\begin{cor}\label{stablydominatedimpliesseparated}
Let $a$ be a tuple of valued field elements, let $C$ be a subfield of $\monster$, and suppose  that $L=\dcl_{\Ltilde}(Ca)$ is a regular extension of $C$. If $\tp_{\Ltilde}(a/C)$ is stably dominated then $L$ has the good separated basis property over $C$.
\end{cor}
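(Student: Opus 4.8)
The plan is to reduce everything to Proposition~\ref{dominationtoimmediateimpliesseparated}: it suffices to produce a maximal immediate extension $F$ of $\Calg$ inside $\widetilde{\monster}$ for which $\tp_{\Ltilde}(L/C)\vdash\tp_{\Ltilde}(L/F)$. First I would observe that $L=\dcl_{\Ltilde}(Ca)$ is henselian (being the henselization of $C(a)$) and, by hypothesis, a regular extension of $C$, so Lemma~\ref{orthogonaltoacl} gives $\tp_{\Ltilde}(L/C)\vdash\tp_{\Ltilde}(L/\Calg)$ (the field part of $\acl_{\Ltilde}(C)$ is $\Calg$, the remaining geometric sorts lying in $\dcl_{\Ltilde}(\Calg)$). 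Since type implications compose, the task reduces to finding $F$ with $\tp_{\Ltilde}(L/\Calg)\vdash\tp_{\Ltilde}(L/F)$.

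Next, by Fact~\ref{stable domination lifts to acl}, $\tp_{\Ltilde}(a/\Calg)$ is stably dominated. I would fix an arbitrary maximal immediate extension $F$ of $\Calg$ in $\widetilde{\monster}$ (which, being henselian of equicharacteristic $0$ with divisible value group and algebraically closed residue field, is itself algebraically closed), and then prove the key fact that $F$ contributes nothing to the stable part over $\Calg$: $\St_{\Calg}(F)=\St_{\Calg}(\Calg)$. The argument: the sorts of $\St_{\Calg}$ are internal to the residue field $k$ over $\Calg$, so any $s\in\St_{\Calg}(F)$ is of the form $g(\bar d)$ with $g$ a $\Calg$-definable function and $\bar d\in k^n$; the fibre $g^{-1}(s)\cap k^n$ is a nonempty subset of $k^n$ definable over $F$, hence, using stable embeddedness and elimination of imaginaries in the ACF $k$, definable over $k(F)=k(\Calg)$, and since $k(\Calg)$ is algebraically closed (so an elementary substructure of $k$) this fibre has a point in $k(\Calg)^n$, forcing $s\in\dcl_{\Ltilde}(\Calg)$. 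Granting this, $\St_{\Calg}(a\Calg)\ind_{\Calg}\St_{\Calg}(F)$ holds trivially, so Definition~\ref{stable domination definition} (applied with $b$ an enumeration of $F$) gives $\tp_{\Ltilde}(F/\Calg)\vdash\tp_{\Ltilde}(F/\Calg a)$, and then Lemma~\ref{type implications}(i) converts this to $\tp_{\Ltilde}(\Calg a/\Calg)\vdash\tp_{\Ltilde}(\Calg a/F)$, whence in particular $\tp_{\Ltilde}(a/\Calg)\vdash\tp_{\Ltilde}(a/F)$.

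Finally I would upgrade this from $a$ to $L$ by a routine automorphism argument: given $\sigma\in\Aut(\widetilde{\monster}/\Calg)$, from $\sigma(a)\equiv_{\Calg}a$ we get $\sigma(a)\equiv_F a$, so there is $\tau\in\Aut(\widetilde{\monster}/F)$ with $\tau(a)=\sigma(a)$; since $\tau$ fixes $F\supseteq C$ pointwise, $\tau(Ca)=\sigma(Ca)$, hence $\tau(L)=\sigma(L)$ and so $\sigma(L)\equiv_F L$. This yields $\tp_{\Ltilde}(L/\Calg)\vdash\tp_{\Ltilde}(L/F)$, which composed with Lemma~\ref{orthogonaltoacl} gives $\tp_{\Ltilde}(L/C)\vdash\tp_{\Ltilde}(L/F)$, and Proposition~\ref{dominationtoimmediateimpliesseparated} then delivers the good separated basis property of $L$ over $C$. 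The step I expect to be the real obstacle is the identity $\St_{\Calg}(F)=\St_{\Calg}(\Calg)$ — one must invoke the description of the stable part in ACVF over an algebraically closed base (internality to the residue field) together with the stable embeddedness of the residue field to push the witnessing parameters down into $F$.
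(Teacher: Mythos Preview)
Your approach is essentially the paper's: produce a maximal immediate extension $F$ of $\Calg$, argue that $F$ adds nothing to the stable part, use stable domination to get $\tp_{\Ltilde}(L/C)\vdash\tp_{\Ltilde}(L/F)$, and invoke Proposition~\ref{dominationtoimmediateimpliesseparated}. The paper applies stable domination directly over $C$ rather than lifting to $\Calg$ first, and works with $L$ throughout rather than passing through $a$, so your detours through Fact~\ref{stable domination lifts to acl} and the final ``upgrade from $a$ to $L$'' are unnecessary (since $L=\dcl_{\Ltilde}(Ca)$, any type implication for $a$ is automatically one for $L$), but they are harmless.

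There is one genuine slip. Definition~\ref{stable domination definition} does \emph{not} give $\tp_{\Ltilde}(F/\Calg)\vdash\tp_{\Ltilde}(F/\Calg a)$; it gives $\tp_{\Ltilde}(F/\Calg\,\St_{\Calg}(a\Calg))\vdash\tp_{\Ltilde}(F/\Calg a)$, and $\St_{\Calg}(a\Calg)$ need not lie in $F$, so Lemma~\ref{type implications}(i) does not apply with base $\Calg\,\St_{\Calg}(a\Calg)$. The fix is exactly what the paper does: cite Proposition~\ref{resdominationequivalent} ((i)$\Rightarrow$(ii)) to get $\tp_{\Ltilde}(a/\Calg\,\St_{\Calg}(F))\vdash\tp_{\Ltilde}(a/F)$, and then your identity $\St_{\Calg}(F)=\St_{\Calg}(\Calg)\subseteq\dcl_{\Ltilde}(\Calg)$ collapses the left side to $\tp_{\Ltilde}(a/\Calg)$. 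With that correction your proof is complete.
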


\begin{proof}
Working in $\widetilde\monster$, let $F$ be any immediate extension of $\Calg$. Because $\St_C(F)=\St_C(C^{\alg})$ and thus $\St_C(F)\subseteq \acl_{\Ltilde}(\St_C(C))$ we have $\St_C(L)\ind_C \St_C(F)$. Because $\tp_{\Ltilde}(L/C)$ is stably dominated (and Proposition  \ref{resdominationequivalent}), 
$\tp_{\Ltilde}(L/C\St_C(F))\vdash \tp_{\Ltilde}(L/F)$. Clearly, $\tp_{\Ltilde}(L/\Calg) \vdash \tp_{\Ltilde}(L/C\St_C(F))$ and as $\tp_{\Ltilde}(L/C)\vdash \tp_{\Ltilde}(L/\Calg)$ by Lemma~\ref{orthogonaltoacl}, we have $\tp_{\Ltilde}(L/C)\vdash \tp_{\Ltilde}(L/F)$.  If $F$ is also maximal then we are in the situation of Proposition~\ref{dominationtoimmediateimpliesseparated}.
\end{proof}

The following lemma is stated as a claim in the proof of Proposition~12.11 of \cite{HHM2} and the subsequent lemma is part of the statement of that proposition.  However, in \cite{HHM2}, $C$ is assumed to be maximal. We repeat the proofs here in order to clarify that the maximality of $C$ is only used to obtain a separated basis.

\begin{lemma}\label{separatedbasislifts}
Let $L$, $M$ be valued fields with $C\subseteq L\cap M$ a valued subfield. Assume that $\Gamma_L\cap\Gamma_M  = \Gamma_C$, $k_L$ and $k_M$ are linearly disjoint over $k_C$, and $L$ has the good separated basis property over $C$. Choose $\{\ell_1, \dots \ell_k\}$ a good separated basis for the subspace of $L$ it generates over $C$.  Then  
$\{\ell_1,\ldots, \ell_k\}$ is still a good separated basis for the subspace 
 of $LM$ that it generates over $M$.
\end{lemma}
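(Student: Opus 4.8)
The goal is to show that a good separated basis $\{\ell_1,\dots,\ell_k\}$ of a $C$-subspace of $L$ remains a good separated basis for the $M$-subspace it spans inside $LM$. The ``good'' part is essentially free: the condition ``$v(\ell_i)=v(\ell_j)$ or $v(\ell_i)-v(\ell_j)\notin\Gamma_M$'' must be verified, but we already know $v(\ell_i)-v(\ell_j)\in\Gamma_L$ (it is a difference of values of field elements of $L$), so if it were in $\Gamma_M$ it would lie in $\Gamma_L\cap\Gamma_M=\Gamma_C$, contradicting goodness over $C$ unless $v(\ell_i)=v(\ell_j)$. So the entire content is the separated-basis identity: for all $m_1,\dots,m_k\in M$,
\[
v\Bigl(\sum_{i=1}^k m_i\ell_i\Bigr)=\min_i v(m_i\ell_i).
\]

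The plan is the standard argument (this is the claim inside \cite[Proposition 12.11]{HHM2}), and the key point is that the hypotheses on $C$ being maximal are only invoked there to produce the separated basis, which we are now assuming outright. First, by scaling and reindexing we may assume $\min_i v(m_i\ell_i)=v(m_1\ell_1)$ and divide through so that $v(m_1\ell_1)=0$; the inequality $v(\sum m_i\ell_i)\geq \min_i v(m_i\ell_i)$ is automatic, so suppose for contradiction that $v(\sum_i m_i\ell_i)>0$. Group the indices according to the value $v(m_i\ell_i)$: let $I=\{i: v(m_i\ell_i)=0\}$, and note $1\in I$; the terms with $i\notin I$ already have positive valuation, so we may discard them and assume all $v(m_i\ell_i)=0$, i.e.\ every $m_i\ell_i$ is a unit. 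Now group the $i\in I$ by the coset $v(\ell_i)+\Gamma_C\in\Gamma_L/\Gamma_C$. Since the basis is \emph{good}, within each such group all the $v(\ell_i)$ are actually \emph{equal}. For a group with common value $v(\ell_i)=\delta$, the corresponding $v(m_i)=-\delta$ are equal, so the ratios $m_i/m_{i_0}$ (for a fixed representative $i_0$ of the group) are units of $M$, hence have residues $\bar{c}_i\in k_M$. Passing to residues in the identity $\sum_{i\in I} m_i\ell_i$: each group contributes $m_{i_0}\sum (m_i/m_{i_0})\ell_i$, and reducing gives, after another scaling so that the residue of $m_{i_0}\ell_{i_0}$ is $1$, an expression $\sum_{i\in I}\bar c_i\,\overline{(\ell_i/\ell_{i_0(i)})}\cdot\overline{u}$-type terms; more cleanly, one writes the residue of $\sum_{i\in I}m_i\ell_i$ as a $k_M$-linear combination of the residues $\overline{\ell_i\ell_{i_0}^{-1}}\in k_L$ of unit ratios inside $L$. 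The assumed vanishing $v(\sum m_i\ell_i)>0$ says this $k_M$-linear combination of elements of $k_L$ is zero.

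The final step uses linear disjointness of $k_L$ and $k_M$ over $k_C$: the elements $\overline{\ell_i\ell_{i_0}^{-1}}$ involved lie in $k_L$, and from the separated-basis property of $\{\ell_i\}$ \emph{over $C$} one checks these residues are $k_C$-linearly independent (a nontrivial $k_C$-relation among them would lift to a $C$-relation among the $\ell_i$ of the same group violating separatedness at value $0$ within that group). Hence a nontrivial $k_M$-linear relation among them forces, by linear disjointness, all the $k_M$-coefficients to vanish — but the coefficient attached to the index $1$ is a unit residue, so it is nonzero, a contradiction. Therefore $v(\sum m_i\ell_i)=\min_i v(m_i\ell_i)$ as required. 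The only genuinely delicate bookkeeping — and the step I expect to be the main obstacle — is organizing the double grouping (first by $v(m_i\ell_i)$, then by the $\Gamma_C$-coset of $v(\ell_i)$) and cleanly extracting the $k_C$-independent family of residues in $k_L$ together with its nonzero $k_M$-coefficient; once that is set up, linear disjointness closes the argument immediately, exactly as in \cite[Proposition 12.11]{HHM2}.
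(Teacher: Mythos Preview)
Your proposal is correct and follows essentially the same route as the paper, but you have overcomplicated the bookkeeping you flag as the ``main obstacle.'' The second grouping (by the $\Gamma_C$-coset of $v(\ell_i)$) is in fact trivial: for $i,j\in I$ you have $v(m_i\ell_i)=v(m_j\ell_j)$, so $v(\ell_i)-v(\ell_j)=v(m_j)-v(m_i)\in\Gamma_M$; since this difference also lies in $\Gamma_L$, the hypothesis $\Gamma_L\cap\Gamma_M=\Gamma_C$ gives $v(\ell_i)-v(\ell_j)\in\Gamma_C$, and goodness over $C$ forces $v(\ell_i)=v(\ell_j)$. Hence there is only one group, and the potentially messy cross-group residues $\res(m_{i_0}\ell_{i_0})\in k_{LM}$ never arise. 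This is exactly the paper's observation, and with it the argument collapses to a single division by $m_j\ell_j$ and a direct application of linear disjointness, just as you describe in your final paragraph.
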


\begin{proof}
Suppose, for a contradiction, that there are $m_1,\ldots,m_k$  in $M$ such that 
\[
	v(\sum_{i=1}^k \ell_i m_i ) >  \min\{v(\ell_i m_i): 1\le i\le k\} = \gamma.
\]
Let $I\subseteq\{1,\ldots,k\}$ be the set of indices with $v(\ell_i m_i)=\gamma$ for $i\in I$. Note that $|I| > 1$ and for all $i, j$ in $I$, 
$v(\ell_i)-v(\ell_j) = v(m_j)-v(m_i) \in \Gamma_L\cap\Gamma_M = \Gamma_C$. Thus $v(\ell_i)=v(\ell_j)$ as the
basis is good. Fix $j\in I$ and write $I'=I\setminus\{j\}$. Now
\[
	v(\sum_{i\in I} \ell_i m_i) > \gamma \Longrightarrow v(1 + \sum_{i\in I'} \frac{\ell_i m_i}{\ell_j m_j}) > 0 
\]
and hence $\res(1 + \sum_{i\in I'} \frac{\ell_i m_i}{\ell_j m_j}) = 0$. As $v(\ell_i/\ell_j)=v(m_i/m_j)=0$, the residue map is
a ring homorphism, and hence 
\[
	1 + \sum_{i\in I'} \res(\ell_i/\ell_j)\res(m_i/m_j) =0.
\]
As $k_L$, $k_M$ are linearly disjoint over $k_C$, there must be $c_i\in C$ for $i\in I'$ with $\res(c_i)$ not all zero such that
$\res(c_j)+\sum_{i\in I'} \res(\ell_i/\ell_j)\res(c_i) =0$. Lifting back to the field gives $v (\sum_{i\in I'} \ell_i c_i ) > v(\ell_j)$,
which contradicts the assumption that $\{\ell_i:i\in I\}$ is separated over $C$. The basis is clearly good, as the value groups of $L$ and $M$ are disjoint over the value group of $C$.
\end{proof}

Lemma~\ref{separatedbasislifts} gives the following  purely algebraic statement. 

\begin{prop}\label{linearlydisjoint}
Let $L$, $M$ be valued fields with $C\subseteq L\cap M$ a valued subfield. Assume that $\Gamma_L\cap\Gamma_M  = \Gamma_C$,
that $k_L$ and $k_M$ are linearly disjoint over $k_C$ and that $L$ or $M$ has the good separated basis property over $C$. Then
$L$ and $M$ are linearly disjoint over $C$, $\Gamma_{LM}$ is the group generated by $\Gamma_L$ and $\Gamma_M$ over 
$\Gamma_C$ and $k_{LM}$ is the field generated by $k_L$ and $k_M$ over $k_C$.
\end{prop}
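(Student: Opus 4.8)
The plan is to reduce everything to Lemma~\ref{separatedbasislifts} together with standard facts about linear disjointness and valuations. Assume without loss of generality that $L$ has the good separated basis property over $C$ (the other case is symmetric, interchanging the roles of $L$ and $M$). First I would prove linear disjointness: given finitely many elements $\ell_1,\ldots,\ell_k\in L$ that are linearly independent over $C$, I may by hypothesis enlarge the set and arrange that $\{\ell_1,\ldots,\ell_k\}$ is a good separated basis over $C$ for the space it generates; by Lemma~\ref{separatedbasislifts} it remains a separated basis over $M$ for the space it generates in $LM$, hence in particular it is still linearly independent over $M$. Since any finite $C$-linearly independent tuple from $L$ stays $M$-linearly independent, $L$ and $M$ are linearly disjoint over $C$.

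Next I would identify $\Gamma_{LM}$. An arbitrary nonzero element of $LM$ is a finite sum $\sum_{i=1}^k \ell_i m_i$ with $\ell_i\in L$, $m_i\in M$; extending $\{\ell_1,\ldots,\ell_k\}$ to a good separated basis over $C$ (which by Lemma~\ref{separatedbasislifts} is still separated over $M$), we get
\[
v\Bigl(\sum_{i=1}^k \ell_i m_i\Bigr) = \min_{1\le i\le k} \bigl(v(\ell_i) + v(m_i)\bigr) \in \Gamma_L + \Gamma_M,
\]
so the value group of $LM$ is contained in the subgroup generated by $\Gamma_L$ and $\Gamma_M$; the reverse containment is trivial, giving $\Gamma_{LM} = \langle \Gamma_L, \Gamma_M\rangle$.

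For the residue field, let $x\in LM$ with $v(x)=0$; write $x = \sum_{i=1}^k \ell_i m_i$ and, as above, pass to a good separated basis $\{\ell_1,\ldots,\ell_k\}$ over $C$, still separated over $M$. Then $\gamma := \min_i(v(\ell_i)+v(m_i)) = 0$; let $I = \{i : v(\ell_i)+v(m_i)=0\}$ and discard the indices not in $I$ (those terms have positive valuation and do not affect the residue). For $i,j\in I$ we have $v(\ell_i)-v(\ell_j)\in\Gamma_L\cap\Gamma_M=\Gamma_C$, so goodness forces $v(\ell_i)=v(\ell_j)=:\delta$ for all $i\in I$, and correspondingly $v(m_i)=-\delta$. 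Fixing $j\in I$ and dividing through by $\ell_j m_j$ (note $v(\ell_j m_j)=0$), the residue map is a ring homomorphism on the valuation ring, so
\[
\res(x) = \res(\ell_j m_j)\Bigl(\res(m_j)^{-1}\res(\ell_j)^{-1} + \sum_{i\in I\setminus\{j\}} \res(\ell_i/\ell_j)\,\res(m_i/m_j)\cdot\text{(unit)}\Bigr),
\]
which lies in the subring generated by $\res(\ell_i/\ell_j)\in k_L$ and $\res(m_i/m_j)\in k_M$; a cleaner way to say this is simply that each term $\ell_i m_i / (\ell_j m_j)$ has valuation $0$ and its residue is $\res(\ell_i/\ell_j)\res(m_i/m_j)\in k_L\cdot k_M$, so $\res(x)$ lies in the ring generated by $k_L$ and $k_M$ over $k_C$. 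Since residues of elements of valuation $0$ generate $k_{LM}$ and the ring generated by the two residue fields is already a field (both are fields containing $k_C$), we conclude $k_{LM}$ equals the field generated by $k_L$ and $k_M$ over $k_C$.

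The main obstacle is the residue-field computation: one must be careful that only the indices in $I$ contribute to the residue, and one needs goodness of the separated basis (combined with $\Gamma_L\cap\Gamma_M=\Gamma_C$) precisely to force the valuations $v(\ell_i)$ to coincide on $I$, so that dividing by $\ell_j m_j$ lands inside the valuation ring and the residue map behaves multiplicatively. Once that bookkeeping is set up, each of the three conclusions follows from Lemma~\ref{separatedbasislifts} with only routine manipulation.
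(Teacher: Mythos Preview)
Your approach is essentially the same as the paper's: reduce to Lemma~\ref{separatedbasislifts}, use separation over $M$ to get linear disjointness, read off the value group from the $\min$ formula, and compute residues termwise on the index set $I$.

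Two small points deserve tightening. First, an arbitrary nonzero element of $LM$ is a \emph{quotient} of sums $\sum \ell_i m_i$, not itself such a sum; the paper works in the ring generated by $L$ and $M$, which suffices since $v(a/b)=v(a)-v(b)$ and $\res(a/b)=\res(a)/\res(b)$ once you know the ring case. Second, your residue computation stops just short: after dividing by $\ell_j m_j$ you get $\res(x)=\res(\ell_j m_j)\cdot(\text{element of }k_L k_M)$, but you still need $\res(\ell_j m_j)\in k_L k_M$. The missing observation is that $\delta=v(\ell_j)\in\Gamma_L$ and $-\delta=v(m_j)\in\Gamma_M$, so $\delta\in\Gamma_L\cap\Gamma_M=\Gamma_C$; picking $c\in C$ with $v(c)=\delta$ gives $\res(\ell_j m_j)=\res(\ell_j/c)\,\res(c\,m_j)\in k_L k_M$. (The paper glosses over this step as well, simply writing $\sum_{i\in I}\res(\ell_i m_i)$ without comment.) With these two fixes your argument is complete and matches the paper's.
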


\begin{proof}Without loss of generality, $L$ has the good separated basis property over $C$. To prove the linear disjointness, it suffices to show that any finite tuple $\ell_1,\ldots , \ell_k$ from $L$ which is linearly independent 
over $C$ is also linearly independent over $M$ (recall that we are working inside some ambient structure, so this statement makes 
sense).  This follows from the conclusion of Lemma~\ref{separatedbasislifts}. 

Now let $x$ be in the ring generated by $L$ and $M$ over $C$. Then $x=\sum_{i=1}^k \ell_i m_i$ for some $\ell_i\in L$, $m_i\in M$ and we may assume that the $\ell_i$
form a good separated basis for the $C$-vector subspace of $L$ that they generate. By Lemma~\ref{separatedbasislifts} the tuple is also separated over
$M$ and hence $v(x) = v(\ell_j) + v(m_j)$ for some $j\in \{1,\ldots,l\}$. Thus 
$\Gamma_{LM} = \Gamma_L\oplus_{\Gamma_C} \Gamma_M$. Suppose that $\res(x)\ne 0$. Let $I=\{ i : v(\ell_i m_i) =0\}$.
Then $\res(x) =\res(\sum_{i\in I} \ell_i m_i) = \sum_{i\in I} \res(\ell_im_i)$, and hence the residue field of $k_{LM}$ is 
generated by $k_L$ and $k_M$.  
\end{proof}

\section{Preliminary domination results}

In this section, we show that a separated basis is strong enough to imply statements which are almost residue field domination results. The conclusion of Proposition~\ref{basicisomorphismtheorem} is not quite the statement of residue field domination for two reasons. Firstly, the type implication should be over the residue field of $M$, rather than the residue field of $L$. This is addressed in Corollary~\ref{revisedbasiclemma}. Secondly, the type implication needs to proved for subsets of any sort, not just the field sort. This is addressed in Section~4.
 
The first proposition shows that the good separated basis property is exactly what is needed in order to show type implication. The first part is a statement about $\widetilde \monster$ and is Proposition~12.11 of \cite{HHM2}, except with the assumption of a good separated basis replacing the maximality of $C$.  The further conclusion of this proposition is proved in 
\cite[Theorem~2.5]{EHM} in the case of real closed valued fields. The proof given here is very similar, and illuminates the
key properties to verify that the isomorphism of valued fields is actually an isomorphism 
of the full structure.

\begin{prop}\label{basicisomorphismtheorem}
Let $L$, $M$ be valued fields with $C\subseteq L\cap M$ a valued subfield. Assume that $\Gamma_L\cap\Gamma_M  = \Gamma_C$,
that $k_L$ and $k_M$ are linearly disjoint over $k_C$ and that $L$ or $M$ has the good separated basis property over $C$. Let 
$\sigma:L\to L'$ be a valued field isomorphism which is the identity on $C$, $\Gamma_L$ and $k_L$. Then $\sigma$ extends
by the identity on $M$ to a valued field isomorphism from $LM$ to $L'M$, and thus $\tp_{\Ltilde}(L/Ck_L \Gamma_L)\vdash \tp_{\Ltilde}(L/M)$.

Suppose further that $L$ and $M$ are substructures of $\mathcal{U}$ and $\sigma$ is an $\calL$-isomorphism. Then $\sigma$ is an isomorphism of $\RV_{LM}$ to $\RV_{L'M}$, and thus $\tp(L/Ck_L \Gamma_L)\vdash \tp(L/M)$.

\end{prop}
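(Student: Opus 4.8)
The plan is to build the valued-field isomorphism $LM \to L'M$ first, then check it respects the extra structure needed for each of the two conclusions. For the first part, I would use the structural input from Proposition~\ref{linearlydisjoint}: under the hypotheses (disjoint value groups over $\Gamma_C$, linearly disjoint residue fields over $k_C$, and a good separated basis for $L$ or $M$ over $C$), the fields $L$ and $M$ are linearly disjoint over $C$, so the ring $L\cdot M$ has the $\ell_im_i$ as a basis and $\sigma\otimes\id_M$ is a well-defined ring isomorphism $L\cdot M \to L'\cdot M$. To see it is a \emph{valued} isomorphism, I would fix a good separated basis $\{\ell_1,\dots,\ell_k\}$ for a finite-dimensional $C$-subspace of $L$ (WLOG it is $L$ that has the property; the other case is symmetric via Lemma~\ref{type implications}(i)). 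By Lemma~\ref{separatedbasislifts}, this remains a good separated basis for the $M$-space it generates inside $LM$, so for any $m_1,\dots,m_k\in M$,
\[
v\Bigl(\sum_i \ell_i m_i\Bigr) = \min_i\{v(\ell_i)+v(m_i)\}.
\]
Since $\sigma$ fixes $C$, $\Gamma_L$ (hence $v(\ell_i)$) and $k_L$, and since $\sigma(\ell_i)=\ell_i'$ with $v(\ell_i')=v(\ell_i)$, the same separated-basis identity holds for $\sum_i \ell_i' m_i$ on the $L'M$ side (one checks $\{\ell_1',\dots,\ell_k'\}$ is still good separated over $M$, because $\sigma$ being a valued-field isomorphism fixing $\Gamma_L$ and $k_L$ transports the separated-basis condition). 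Hence $v(\sigma(x)) = v(x)$ for all $x$ in the ring, and this extends to the fraction field $LM$. Passing to types: any $\Ltilde$-formula over $M$ satisfied by (a tuple from) $L$ transfers under $\sigma^{-1}$, which fixes $Ck_L\Gamma_L$, giving $\tp_{\Ltilde}(L/Ck_L\Gamma_L)\vdash\tp_{\Ltilde}(L/M)$ by quantifier elimination in ACVF.

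For the second part, I now assume $L,M\subseteq\monster$ and $\sigma$ is an $\calL$-isomorphism, and I must upgrade the valued-field isomorphism $LM\to L'M$ to an isomorphism of $\RV$-structures $\RV_{LM}\to\RV_{L'M}$, so that Proposition~\ref{qe} (in the remark form: a valued-field isomorphism inducing an $\RV$-isomorphism extends to an automorphism of $\monster$) applies and yields $\tp(L/Ck_L\Gamma_L)\vdash\tp(L/M)$. The key point is to understand $\RV_{LM}$ concretely. Using the separated-basis description above, every element of $(LM)^\times$ has the form $x=\sum_i\ell_i m_i$ with a well-defined index $j$ realizing the minimum valuation; modulo higher valuation, $\rv(x)$ is determined by $\rv(\ell_j m_j)$ together with the residues of the ratios $\ell_i m_i/\ell_j m_j$ for the other $i$ in the minimizing set $I$. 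More invariantly, Proposition~\ref{linearlydisjoint} tells us $\Gamma_{LM}=\Gamma_L\oplus_{\Gamma_C}\Gamma_M$ and $k_{LM}$ is the field generated by $k_L$ and $k_M$ over $k_C$, linearly disjoint; I would check that $\RV_{LM}$ is correspondingly "generated" by $\RV_L$ and $\RV_M$ over $\RV_C$ in a way that makes the map $\rv(\ell)\cdot\rv(m)\mapsto\rv(\sigma\ell)\cdot\rv(m)$ well-defined and bijective, and that it is compatible with $v:\RV\to\Gamma$, with the $k$-predicate (since $\sigma$ fixes $k_L$ and is the identity on $M$, hence on $k_M$, it fixes the compositum $k_{LM}$), and with the $n$th-power predicates and the named constants (these lie in the algebraic closure of $\bQ$ inside $\RV$, hence are fixed by any $\calL$-isomorphism).

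The main obstacle, and the step I would spend the most care on, is exactly this last verification: showing the induced map on $\RV_{LM}$ is \emph{well-defined} and \emph{bijective} and \emph{preserves the $n$th-power predicates}. Well-definedness is delicate because an element of $\RV_{LM}$ can be represented by many field elements $\sum\ell_i m_i$; the separated-basis normal form is what tames this, so the argument must show the representation of $\rv(x)$ in terms of $\rv(\ell_j m_j)$ and the residues $\res(\ell_i m_i/\ell_j m_j)$ is genuinely canonical, and then that $\sigma$ (fixing $\RV_L$, identity on $M$) sends it to the corresponding canonical datum on the $L'M$ side. Preservation of the $n$th-power predicates in $\RV$ does not follow formally from being a valued-field isomorphism — it must be checked — but it follows once one sees $\RV_{LM}$ sits inside the multiplicative structure generated by $\RV_L\RV_M$ with $\sigma$ fixing $\RV_L$ pointwise and fixing $\RV_M$ (identity on $M$), since $n$th powers are a multiplicative condition and $\RV_L,\RV_M$ are $\calL$-substructures. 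Once the $\RV$-isomorphism is in hand, the type implication is immediate from Proposition~\ref{qe}, and the symmetric case (where $M$, not $L$, carries the good separated basis) is handled by the equivalence in Lemma~\ref{type implications}(i).
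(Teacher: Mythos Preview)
Your first paragraph is essentially the paper's argument for the valued-field part, and is fine. The handling of the case where $M$ (rather than $L$) has the good separated basis property via Lemma~\ref{type implications}(i) is a bit glib---the paper just repeats the calculation with $\{m_1,\dots,m_k\}$ separated over $L$ and $L'$---but this is minor.

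The second part has a real gap. You assert that $\sigma$ fixes $\RV_L$ pointwise, and your $P_n$-preservation argument rests on this. But the hypothesis only gives that $\sigma$ is the identity on $C$, $\Gamma_L$, and $k_L$; it need not fix $\RV_L$. For instance, if $\ell\in L$ has $v(\ell)=\gamma\notin\Gamma_C$, then $\rv(\ell)$ and $\rv(\sigma(\ell))$ both lie in the fibre $\RV_\gamma$ but there is no reason for them to coincide (equality would force $v(\ell-\sigma(\ell))>\gamma$). What you \emph{do} know is that $\sigma|_L$ is an $\calL$-isomorphism, so it preserves the $P_n$ predicates and the named coset representatives on $\RV_L$: thus $\rv(\ell)$ and $\rv(\sigma(\ell))$ lie in the same $P_n$-coset, even if they differ as elements. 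This is exactly what the paper exploits.

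The paper's route is then: first observe that $\sigma$ is the identity on $k_{LM}$ (since $k_{LM}$ is generated by $k_L$ and $k_M$ by Proposition~\ref{linearlydisjoint}, and $\sigma$ fixes both). For $a=\ell m$ a monomial, a direct coset computation using the representatives $\lambda$ shows $P_n(\rho^{-1}\rv(a))\Leftrightarrow P_n(\rho^{-1}\sigma(\rv(a)))$. For a general $a\in LM$, since $\Gamma_{LM}=\Gamma_L+\Gamma_M$ one writes $a=\ell m a_0$ with $v(a_0)=0$; then $\rv(a_0)=\res(a_0)\in k_{LM}$ is fixed by $\sigma$, and one combines this with the monomial case. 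Note also that well-definedness and bijectivity of the induced map $\RV_{LM}\to\RV_{L'M}$ are automatic once $\sigma$ is a valued-field isomorphism on $LM$; there is no representation issue to resolve---only the $P_n$ predicates need checking.
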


\begin{proof}
By Lemma~\ref{linearlydisjoint},  $L$ and $M$  are linearly disjoint over $C$.  Since $k_L'=k_L$, $\Gamma_L'=\Gamma_L$, and $L'$ has the good separated basis property over C whenever $L$ does, Lemma~\ref{linearlydisjoint} also implies that $L'$ and $M$ are linearly disjoint over $C$.  Hence $\sigma$ extends to a field isomorphism on $LM$ given by $\sigma(\sum\ell_i m_i) = \sum \sigma(\ell_i) m_i$ for any $\ell_i\in L$, $m_i\in M$. 

To show that $\sigma$ preserves the valuation on $LM$, choose $x$ in the ring generated by $L$ and $M$ over $C$ and write $x= \sum_{i=1}^k \ell_i m_i$.  First suppose that $L$ has the good separated basis property over $C$. We may assume that $\{\ell_1,\ldots,\ell_k\}$
is separated over $C$ and, as $\sigma$ is a
valued field isomorphism on $L$, this implies also that $\{\sigma(\ell_1),\ldots,\sigma(\ell_k)\}$ is separated over $C$.   Hence, by Lemma~\ref{separatedbasislifts}, both bases are separated over $M$.  Then
\[
	v(x) = \min_{1\le i \le k}\{v(\ell_i) + v(m_i)\} =  \min_{1\le i \le k}\{v(\sigma(\ell_i)) + v(m_i)\}  = v(\sigma(x)),
\]
as required.  On the other hand, if we suppose that $M$  has the good separated basis property over $C$, we may assume that $\{m_1,\ldots,m_k\}$
is separated over $C$ and hence, by  Lemma~\ref{separatedbasislifts}, separated over $L$ and $L'$.   Then, as before,
\[
	v(x) = \min_{1\le i \le k}\{v(\ell_i) + v(m_i)\} =  \min_{1\le i \le k}\{v(\sigma(\ell_i)) + v(m_i)\}  = v(\sigma(x)),
\]
as required.

Note that $\sigma$ is the identity on $k_L$ and $k_M$ and hence by Lemma \ref{linearlydisjoint} on $k_{LM}$.  Likewise, it is the identity on $\Gamma_{LM}$.  Since $\sigma:LM\to L'M$ is a valued field isomorphism, it automatically preserves the group structure on $\RV_{LM}$.  Hence, to show that $\sigma:\RV_{LM} \to \RV_{L'M}$ is an isomorphism it suffices, by the quantifier elimination result in Proposition \ref{qe} and the fact that $\sigma$ is the identity on $\Gamma_{LM}$ and $k_{LM}$, to prove that 
the $n$th power predicates are preserved; that is, $P_n(\rv(a)) \Longleftrightarrow P_n(\sigma(\rv(a))$. For each $n$, we 
have assumed there is a finite set of constants $\{\lambda\}$ which are representatives for the cosets of $P_n$. Of course,
$\sigma(\lambda_L) = \lambda_{L'}$. Consider a coset representative $\rho$.  Since for any $x, y \in RV$, whether or not $xy$ is in the same coset as $\rho$ depends only on the coset of $x$ and the coset of $y$, we have for each $\rho$ a finite set of pairs $\Lambda_{\rho,n}=\{(\lambda,\mu)\}$ such that
 
\[ P_n(\rho^{-1}xy) \Longleftrightarrow \bigvee_{(\lambda,\mu)\in \Lambda_{\rho,n}} P_n(\lambda x) \,\&\, P_n(\mu y).
\]
\begin{claim} Suppose $a= \ell m$ for some $\ell\in L$, $m\in M$. Then for every $n$, 
$P_n(\rho^{-1}\sigma(\rv(a)))  \Longleftrightarrow P_n(\rho^{-1}\rv(a))$. 
\end{claim}

\begin{proof of claim} 
\begin{align*}
	P_n(\rho^{-1}\rv(a)) & \Longleftrightarrow \bigvee_{(\lambda,\mu)\in \Lambda_{\rho,n}} P_n(\lambda \rv(\ell)) \,\&\, P_n(\mu \rv(m)) \\
		& \Longleftrightarrow \bigvee_{(\lambda,\mu)\in \Lambda_{\rho,n}} P_n(\sigma(\lambda \rv(\ell)) \,\&\, P_n(\sigma(\mu \rv(m)), \mbox{ as $\sigma | L$ is an isomorphism and $\sigma| M =\id$}; \\
		& \Longleftrightarrow \bigvee_{(\lambda,\mu)\in \Lambda_{\rho,n}} P_n(\lambda \rv(\sigma(\ell))) \,\&\, P_n(\mu \rv(m)) \\
		& \Longleftrightarrow P_n(\rho^{-1}\sigma(\rv(a)) .
\end{align*}
\end{proof of claim}
Now let $a=\sum_{i=1}^n \ell_i m_i$ for some $n>1$. By Lemma~\ref{linearlydisjoint}, $v(a)$ is in the group 
generated by $\Gamma_L$ and $\Gamma_M$, so there are $\ell\in L$ and $m\in M$ with $v(a) = v(\ell m)$. Write 
$a= \ell m a_0$, where $v(a_0) = 0$ and note that $a_0\in LM$.  Then $\rv(a_0) = \res(a_0)$. As 
$\sigma$ is the identity on $k_{LM}$, $\sigma(\res(a_0)) = \res(a_0)$, and therefore  $\sigma(\rv(a_0)) = \rv(a_0)$.Thus 
$P_n(\sigma(\rv(a_0)) \Longleftrightarrow P_n(\rv(a_0))$. Hence
\begin{align*}
	P_n(\rv(a)) & \Longleftrightarrow \bigvee_{(\lambda,\mu)\in \Lambda_{1,n}} P_n(\lambda \rv(\ell m)) \,\&\, P_n(\mu \rv(a_0)) \\
		& \Longleftrightarrow \bigvee_{(\lambda,\mu)\in \Lambda_{1,n}} P_n(\lambda \sigma(\rv(\ell m))) \,\&\, P_n(\mu \sigma(\rv(a_0))), \mbox{by the claim and the above}; \\
		& \Longleftrightarrow P_n(\sigma(\rv(a)) .
\end{align*}
\end{proof}

As in \cite{EHM}, it is helpful to state the following corollary, which means in particular that we can change the hypothesis on $\sigma$ to assume that it fixes the value group and residue field of $M$ instead of those of $L$.

\begin{cor}\label{revisedbasiclemma}
Let $L$, $M$ be substructures of $\monster$ with $C\subseteq L\cap M$ a valued subfield. Assume that 
$\Gamma_L\cap\Gamma_M = \Gamma_C$, that $k_L$ and $k_M$ are linearly disjoint over $k_C$, and that $L$ or $M$ has the good separated basis property over $C$. Then  $\tp(L/C\Gamma_M k_M)\vdash \tp(L/M) $. Similarly, if $L$ and $M$ are substructures of $\widetilde\monster$ satisfying the same hypotheses, then $\tp_{\Ltilde}(L/C\Gamma_M k_M)\vdash \tp{\Ltilde}(L/M) $.
\end{cor}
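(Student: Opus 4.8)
The plan is to re-run the proof of Proposition~\ref{basicisomorphismtheorem}, but with the roles of $k_M,\Gamma_M$ in place of $k_L,\Gamma_L$. Fix a realization $L'\models\tp(L/C\Gamma_Mk_M)$ and an automorphism $\sigma\in\Aut(\monster/C\Gamma_Mk_M)$ with $\sigma(L)=L'$; it suffices to produce $\tau\in\Aut(\monster/M)$ with $\tau(L)=L'$, since then $\tp(L'/M)=\tp(L/M)$. The first step is to observe that the pair $(L',M)$ again satisfies the hypotheses of Proposition~\ref{basicisomorphismtheorem}: applying the $\calL$-automorphism $\sigma$, which is the identity on $C$, on $\Gamma_M$ and on $k_M$, to the equality $\Gamma_L\cap\Gamma_M=\Gamma_C$ and to the linear disjointness of $k_L,k_M$ over $k_C$ yields $\Gamma_{L'}\cap\Gamma_M=\Gamma_C$ and that $k_{L'},k_M$ are linearly disjoint over $k_C$; and $L'$ or $M$ has the good separated basis property over $C$ because $\sigma$ is a valued field isomorphism fixing $C$.

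By Proposition~\ref{linearlydisjoint} applied to both $(L,M)$ and $(L',M)$, these pairs are linearly disjoint over $C$, with $\Gamma_{LM}=\Gamma_L\oplus_{\Gamma_C}\Gamma_M$ and $k_{LM}=k_L\cdot k_M$ (and likewise with primes). Hence $\sigma|_L$ extends to a field isomorphism $\rho\colon LM\to L'M$ defined by $\rho(\sum\ell_im_i)=\sum\sigma(\ell_i)m_i$ ($\ell_i\in L$, $m_i\in M$); it is the identity on $M$ and agrees with $\sigma$ on $L$. That $\rho$ preserves the valuation is the same computation as in Proposition~\ref{basicisomorphismtheorem}, using Lemma~\ref{separatedbasislifts} together with the fact that $\sigma|_L$, being a valued field isomorphism fixing $C$, carries a good separated basis of $L$ over $C$ to one of $L'$ over $C$.

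The remaining, and main, step is to check that $\rho$ is an $\calL$-isomorphism, for then by Proposition~\ref{qe} it extends to some $\tau\in\Aut(\monster)$; since $\rho$ is the identity on $M$, it is the identity on $\Gamma_M$, $k_M$ and $\RV_M$, so $\tau\in\Aut(\monster/M)$, and $\tau(L)=\rho(L)=\sigma(L)=L'$, as required. The difference from Proposition~\ref{basicisomorphismtheorem}, where $\sigma$ is the identity on $k_L$ and $\Gamma_L$, is that here $\rho$ need not be; the key observation is that the maps $k_{LM}\to k_{L'M}$ and $\Gamma_{LM}\to\Gamma_{L'M}$ induced by $\rho$ are the restrictions of the automorphisms $\sigma|_k$ and $\sigma|_\Gamma$ of the residue field and value group sorts of $\monster$. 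Indeed, on the generators $k_L$ (resp.\ $\Gamma_L$) these induced maps agree with $\sigma$ by construction, and on $k_M$ (resp.\ $\Gamma_M$) they are the identity, which is exactly where $\sigma$ is the identity since $k_M,\Gamma_M$ lie in the base fixed by $\sigma$. Consequently $\rho$ respects the $\emptyset$-definable induced structure on $k$ and $\Gamma$ demanded by Proposition~\ref{qe}. Finally, $\rho$ preserves the $n$th power predicates on $\RV_{LM}$ by the computation in the Claim in the proof of Proposition~\ref{basicisomorphismtheorem}: that argument only used that $\sigma|_L$ is an $\calL$-isomorphism, that $\rho$ is the identity on $M$, and that $\rho$ restricted to $k_{LM}$ preserves the $P_n$ — which now holds because that restriction is $\sigma|_k$ and $\sigma$ is an $\calL$-automorphism, rather than because it is the identity. (One also notes that $\rho$ fixes the named $\RV$-constants, as these lie in $\dcl(\emptyset)$.)

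For the statement in $\widetilde\monster$, the same argument works verbatim, invoking the first part of Proposition~\ref{basicisomorphismtheorem} and quantifier elimination for ACVF in $\Ltilde$ in place of Proposition~\ref{qe}; the residue field and value group bookkeeping is even easier there, as $\Ltilde$ imposes no extra induced structure or $n$th power predicates on $\RV$.
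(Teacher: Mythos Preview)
Your proof is correct but takes a substantially more laborious route than the paper's. The paper derives the corollary in two lines: Proposition~\ref{basicisomorphismtheorem} already gives $\tp(L/C\Gamma_L k_L)\vdash \tp(L/M)$, and then the abstract equivalence $(v)\Rightarrow(ii)$ of Proposition~\ref{resdominationequivalent} (applied with $\calS$ the residue field together with the value group, $a=L$, $b=M$) immediately converts this into $\tp(L/C\Gamma_M k_M)\vdash \tp(L/M)$. You instead re-open the proof of Proposition~\ref{basicisomorphismtheorem} and carefully track why the extended map $\rho$ is an $\calL$-isomorphism when $\sigma$ fixes $k_M,\Gamma_M$ rather than $k_L,\Gamma_L$; your key observation --- that the maps induced by $\rho$ on $k_{LM}$ and $\Gamma_{LM}$ coincide with the restrictions of the ambient automorphism $\sigma$, and hence automatically respect all the $\emptyset$-definable induced structure and the $P_n$ predicates --- is exactly right, and the adaptation of the $P_n$ computation is sound.

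What the paper's approach buys is economy and modularity: the passage from ``fix $\calS(L)$'' to ``fix $\calS(M)$'' in a type-implication statement is a general fact about (strongly) stably embedded sets, encapsulated once and for all in Lemma~\ref{Remark 3.7 but no e of i} and Proposition~\ref{resdominationequivalent}, so there is no need to revisit any of the internal computations of Proposition~\ref{basicisomorphismtheorem}. Your direct argument, by contrast, makes the mechanism fully explicit and does not rely on the strong form of stable embeddedness from Remark~\ref{stably embedded}.
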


\begin{proof}
By Proposition~\ref{basicisomorphismtheorem}, we have $\tp(L/C\Gamma_L k_L)\vdash \tp(L/M)$.  Applying $(v)\Rightarrow (ii)$ of Proposition~\ref{resdominationequivalent}, we obtain  $\tp(L/C\Gamma_M k_M)\vdash \tp(L/M) $.
\end{proof}

\begin{rem}\label{residue field domination in the field sort}
If, in the preceding corollary, $L$ could be taken from any sort, we would have proven the following: if $k(M)$ is a regular extension of $k(C)$, $\Gamma_M=\Gamma_C$, and $M$ has the good separated basis property over $C$ then $\tp(M/C)$ is residue field dominated.
\end{rem}

Corollary \ref{revisedbasiclemma} often has implications for how forking behaves.  When $T$ is such that forking and dividing are the same, Corollary \ref{revisedbasiclemma} describes circumstances in which forking in $\monster$ can be reduced to  forking in the residue field and value group, which is presumably easier to understand.

\begin{cor}\label{forking}
Assume that $T$ implies that forking and dividing are the same over $C$, and assume further that $k(Ca)$ is a regular extension of $k_C$, $\Gamma(Ca)/\Gamma_C$ is torsion free, and either $\dcl(Ca)$ or $\dcl(Cb)$ has the good separated basis property over $C$.  Then $a \ind_C b$ if and only if $k(Ca)\Gamma(Ca)\ind_C k(Cb)\Gamma(Cb)$.
\end{cor}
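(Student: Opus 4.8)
The forward direction does not even require the hypothesis that forking equals dividing: since $k(Ca)\Gamma(Ca)\subseteq\dcl(Ca)$ and $k(Cb)\Gamma(Cb)\subseteq\dcl(Cb)$, left and right monotonicity of forking independence yield $k(Ca)\Gamma(Ca)\ind_C k(Cb)\Gamma(Cb)$ from $a\ind_C b$.

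For the converse, set $L=\dcl(Ca)$ and $M=\dcl(Cb)$, and let $\calS=k\cup\Gamma$, so that $\calS(Ca)=k(Ca)\cup\Gamma(Ca)$, $\calS(Cb)=k_M\cup\Gamma_M$, and $\calS$ is stably embedded in the strong sense of Remark~\ref{stably embedded}. Assume $\calS(Ca)\ind_C\calS(Cb)$. The first step is to recover the algebraic hypotheses of Corollary~\ref{revisedbasiclemma}: that $\Gamma_L\cap\Gamma_M=\Gamma_C$ and that $k_L$ and $k_M$ are linearly disjoint over $k_C$. By monotonicity $k(Ca)\ind_C k(Cb)$ and $\Gamma(Ca)\ind_C\Gamma(Cb)$, and since $k$ and $\Gamma$ are stably embedded these are forking independences internal to the residue field and to the value group. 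Forking independence in the residue field implies field-algebraic independence, that is $k(Ca)\aclind_{k_C}k(Cb)$; combined with the regularity of $k(Ca)/k_C$ and Fact~\ref{Lang} this gives linear disjointness of $k_L$ and $k_M$ over $k_C$. In the value group, forking independence forces $\Gamma_L\cap\Gamma_M$ to lie in the relative divisible hull of $\Gamma_C$ inside $\Gamma$, and the assumed torsion-freeness of $\Gamma_L/\Gamma_C$ then upgrades this to $\Gamma_L\cap\Gamma_M=\Gamma_C$. I expect this step to be the main obstacle, since it rests on pinning down the precise relationship between forking independence in $\monster$ and the weaker combinatorial independence notions on the stably embedded sorts $k$ and $\Gamma$.

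Granting those two facts, Corollary~\ref{revisedbasiclemma} applies and gives $\tp(L/C\Gamma_M k_M)\vdash\tp(L/M)$; since $\tp(a/X)\vdash\tp(L/X)$ for any $X\supseteq C$ and $\Gamma_M k_M=\calS(Cb)$, this reads $\tp(a/C\calS(Cb))\vdash\tp(a/Cb)$. By the equivalences of Proposition~\ref{resdominationequivalent} (specifically (ii)$\Leftrightarrow$(v)) we also obtain $\tp(a/C\calS(Ca))\vdash\tp(a/Cb)$, and hence, restricting the conclusion, $\tp(a/C\calS(Ca))\vdash\tp(a/C\calS(Cb))$.

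It remains to conclude $a\ind_C b$, using forking $=$ dividing over $C$ and two indiscernible-sequence arguments. First, a general observation: if $C\subseteq D\subseteq\dcl(CB)$ and $\tp(a/CD)\vdash\tp(a/CB)$, then $\tp(a/CB)$ divides over $C$ if and only if $\tp(a/CD)$ does. For the nontrivial implication, given a $C$-indiscernible sequence $(B_i)$ with $B_0=B$, write $D=g(B)$ with $g$ a $C$-definable tuple of functions and $D_i=g(B_i)$; a realization of the partial type obtained from $\tp(a/CD)$ by replacing $D$ with $D_i$ for each $i$ then realizes, via the implications supplied by $\tp(a/CD)\vdash\tp(a/CB)$ transported along automorphisms sending $B$ to $B_i$, the corresponding translate of $\tp(a/CB)$. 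Applying this with $D=C\calS(Cb)$ and $B=b$ reduces the goal to showing that $\tp(a/C\calS(Cb))$ does not divide over $C$. For that, let $(e_i)$ be $C$-indiscernible with $e_0=\calS(Cb)$; by $\calS(Ca)\ind_C\calS(Cb)$ and forking $=$ dividing, $\tp(\calS(Ca)/C\calS(Cb))$ does not divide over $C$, so it has a realization of the translate along $(e_i)$, and after applying an automorphism fixing $C\calS(Cb)$ (harmless for what we must prove) we may assume $\calS(Ca)$ itself realizes it; this forces $e_i\equiv_{C\calS(Ca)}e_0$ for all $i$. Then $\tp(a/C\calS(Ca))\vdash\tp(a/C\calS(Cb))$, transported along automorphisms fixing $C\calS(Ca)$ and sending $\calS(Cb)$ to $e_i$, shows that $a$ realizes the translate of $\tp(a/C\calS(Cb))$ along $(e_i)$, completing the converse.
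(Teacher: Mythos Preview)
Your argument is correct and follows the same route as the paper, which simply cites Lemma~3.3(i) and Theorem~3.4(ii) of \cite{EHM} with Corollary~\ref{revisedbasiclemma} in place of their Corollary~2.8 and the strong stable embeddedness of Remark~\ref{stably embedded} in place of elimination of imaginaries. Your write-up in fact unpacks that citation: the passage from $k(Ca)\Gamma(Ca)\ind_C k(Cb)\Gamma(Cb)$ to the algebraic hypotheses of Corollary~\ref{revisedbasiclemma} is exactly the content of their Lemma~3.3(i), and your indiscernible-sequence argument transferring non-dividing through the type implication is their Theorem~3.4(ii).
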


\begin{proof}
The proof is exactly that of Lemma 3.3 (i) and Theorem 3.4 (ii) of \cite{EHM}, with the reference to Corollary 2.8 of that paper replaced by Corollary \ref{revisedbasiclemma} of this one, and the use of elimination of imaginaries in the residue field replaced by strong stable embeddedness as in Remark \ref{stably embedded}.
\end{proof}

As a further corollary, we give a purely algebraic characterization of  stable domination in ACVF (at least for a regular extension). We first note the following lemma.

\begin{lemma}\label{separated_bases_and_immediate_extensions}
Let $C,L$ be valued fields with $C\subseteq L$ and suppose that $L$ is  henselian and an unramified regular extension of $C$.  Then the following are equivalent:
\begin{enumerate}
    \item $L$ has the good separated basis property over $C$ 
    \item $\tp_{\Ltilde}(L/C) \vdash \tp_{\Ltilde}(L/F)$ for some maximal immediate extension $F$ of $\Calg$.
    \item $\tp_{\Ltilde}(L/C) \vdash \tp_{\Ltilde}(L/F)$ for any maximal immediate extension $F$ of $\Calg$.
\end{enumerate}
\end{lemma}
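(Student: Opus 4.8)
I would prove the cycle $(3)\Rightarrow(2)\Rightarrow(1)\Rightarrow(3)$. The implication $(3)\Rightarrow(2)$ is immediate, since $\Calg$ has at least one maximal immediate extension, so the universal statement $(3)$ specializes to $(2)$. The implication $(2)\Rightarrow(1)$ is precisely the first conclusion of Proposition~\ref{dominationtoimmediateimpliesseparated}: the hypotheses of that proposition are that $C$ is a field, that $L$ is a regular extension of $C$, and that there exists a maximal immediate extension $F$ of $\Calg$ with $\tp_{\Ltilde}(L/C)\vdash\tp_{\Ltilde}(L/F)$ --- which is exactly statement $(2)$ --- and its conclusion is exactly statement $(1)$.

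So the content is $(1)\Rightarrow(3)$. Fix an arbitrary maximal immediate extension $F$ of $\Calg$, which we may take to lie in $\widetilde{\monster}$. Before invoking any model theory I would record two algebraic observations. First, since $F$ is an \emph{immediate} extension of $\Calg$ we have $\Gamma_F=\Gamma_{\Calg}$ and $k_F=k_{\Calg}$, and in equicharacteristic $0$ one has $\Gamma_C\subseteq\Gamma_{\Calg}$ and $k_{\Calg}=(k_C)^{\alg}$. Second --- and this is the one point that requires a genuine, if short, argument --- henselianity of $L$ together with regularity of the field extension $L/C$ forces $k_L/k_C$ to be a regular extension: given $\alpha\in k_L$ algebraic over $k_C$ with (separable, since the characteristic is $0$) minimal polynomial $p$, lift $p$ to a polynomial $P$ with coefficients in the valuation ring of $C$; since $L$ is henselian, $P$ has a root $a$ in the valuation ring of $L$ with residue $\alpha$; but $a$ is algebraic over $C$, hence $a\in C$ because $L/C$ is regular, and therefore $\alpha=\overline{a}\in k_C$.

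With these in hand I would apply Corollary~\ref{revisedbasiclemma} (in its $\widetilde{\monster}$ form) with $M=F$. Its three hypotheses hold: $\Gamma_L\cap\Gamma_F=\Gamma_L\cap\Gamma_{\Calg}=\Gamma_C$, because $L/C$ being unramified means $\Gamma_L=\Gamma_C\subseteq\Gamma_{\Calg}$; $k_L$ and $k_F=(k_C)^{\alg}$ are linearly disjoint over $k_C$, because $k_L/k_C$ is regular; and $L$ has the good separated basis property over $C$ by $(1)$. Hence $\tp_{\Ltilde}(L/C\Gamma_F k_F)\vdash\tp_{\Ltilde}(L/F)$. To absorb the parameters $\Gamma_F k_F$, note that $L/C$ is a regular extension and $L$ is henselian, so Lemma~\ref{orthogonaltoacl} gives $\tp_{\Ltilde}(L/C)\vdash\tp_{\Ltilde}(L/\Calg)$; since $C\cup\Gamma_F\cup k_F\subseteq\dcl_{\Ltilde}(\Calg)$, this yields $\tp_{\Ltilde}(L/C)\vdash\tp_{\Ltilde}(L/C\Gamma_F k_F)$, and composing the two implications gives $\tp_{\Ltilde}(L/C)\vdash\tp_{\Ltilde}(L/F)$. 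As $F$ was arbitrary, $(3)$ follows. Beyond the residue-field observation above, everything is bookkeeping on top of Corollary~\ref{revisedbasiclemma}, Proposition~\ref{dominationtoimmediateimpliesseparated} and Lemma~\ref{orthogonaltoacl}; the only mild subtlety is noticing that the extra parameters $\Gamma_F,k_F$ produced by Corollary~\ref{revisedbasiclemma} are harmless because they lie in $\dcl_{\Ltilde}(\Calg)$, which is where the unramifiedness hypothesis and the regularity of $k_L/k_C$ are spent.
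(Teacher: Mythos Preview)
Your proof is correct and follows essentially the same approach as the paper: the same cycle $(3)\Rightarrow(2)\Rightarrow(1)\Rightarrow(3)$, the same henselian-lifting argument for regularity of $k_L/k_C$, and the same combination of Corollary~\ref{revisedbasiclemma} with Lemma~\ref{orthogonaltoacl} for $(1)\Rightarrow(3)$. The only difference is that the paper first invokes Lemma~\ref{separatedbasislifts} to transfer the separated basis to $\Calg$ and then applies Corollary~\ref{revisedbasiclemma} over the base $\Calg$, whereas you apply Corollary~\ref{revisedbasiclemma} directly over $C$ with $M=F$ and absorb the leftover parameters $\Gamma_F,k_F\subseteq\acl_{\Ltilde}(C)$ via Lemma~\ref{orthogonaltoacl}; your variant is a mild streamlining that avoids the extra lifting step.
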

\begin{proof}  $(3) \Rightarrow (2)$ is clear and $(2) \Rightarrow (1)$ is Proposition~\ref{dominationtoimmediateimpliesseparated}. 

Let $F$ be any maximal immediate extension of $\Calg$ and assume that $L$ has the good separated basis property over $C$.  We apply Lemma \ref{separatedbasislifts} with $\Calg$ replacing $M$. The lemma applies because $L$ being henselian and regular implies that $k_L$ is a regular extension of $k_C$: for otherwise there would be a polynomial with coefficients in $k_C$ with a root in $k_L$, which would then lift to a polynomial over $C$ with a root in $L$ (as $L$ is henselian and the residue characteristic is zero), contradicting the regularity of $L$ over $C$. Applying Corollary \ref{revisedbasiclemma}, with $L\Calg$ playing the role of $L$, $\Calg$ playing the role of $C$, and $F$ playing the role of $M$, we see that $\tp_{\Ltilde}(L\Calg/\Calg) \vdash \tp_{\Ltilde}(L\Calg/F)$, and hence $\tp_{\Ltilde}(L/\Calg) \vdash \tp_{\Ltilde}(L/F)$.  Now apply Lemma \ref{orthogonaltoacl} to obtain that $\tp_{\Ltilde}(L/C) \vdash \tp_{\Ltilde}(L/F)$.
\end{proof}

\begin{theorem}\label{domination-equivalence}
Suppose that $\monster$ is algebraically closed. Let $C \subset \monster$ be a subfield, let $a$ be a tuple of valued field elements, and let $L$ be the definable closure of $Ca$ in the valued field sort.  Assume $L$ is a regular extension of $C$. Then the following are equivalent.
\begin{enumerate}[(i)]
    \item $\tp_{\Ltilde}(a/C)$ is stably dominated.
    \item $L$ has the good separated basis property over $C$ and $L$ is an unramified extension of $C$.
\end{enumerate} 
\end{theorem}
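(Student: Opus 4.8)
The plan is to prove the two implications separately, leaning on the machinery already assembled. For $(ii)\Rightarrow(i)$: assume $L$ has the good separated basis property over $C$ and is unramified over $C$. Since $\monster$ is algebraically closed, $L=\dcl_{\Ltilde}(Ca)$ is henselian (being the henselization of $C(a)$), so Lemma~\ref{separated_bases_and_immediate_extensions} applies and gives $\tp_{\Ltilde}(L/C)\vdash\tp_{\Ltilde}(L/F)$ for any maximal immediate extension $F$ of $\Calg$. I would then mimic the proof of Corollary~\ref{stablydominatedimpliesseparated} in reverse spirit: pick such an $F$, note $\St_C(F)=\St_C(\Calg)\subseteq\acl_{\Ltilde}(\St_C(C))$, so that $\St_C(L)\ind_C\St_C(F)$ automatically. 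The implication $\tp_{\Ltilde}(L/C)\vdash\tp_{\Ltilde}(L/F)$ factors through $\tp_{\Ltilde}(L/C\St_C(F))$ in the sense that any $b$ with $\St_C(bC)\ind_C\St_C(aC)$ can be realized inside (a copy of) such an $F$ — more precisely, one uses Fact~\ref{stably dominated implies orthogonal}: it suffices to show $\tp_{\Ltilde}(a/C)$ is orthogonal to $\Gamma$, and the type implication over a maximal immediate extension $F$ (which has $\Gamma_F=\Gamma_{\Calg}=\Gamma(C)$) is exactly the statement that adjoining $\Gamma$-information contributes nothing. I would spell this out by taking $b$ a realization of a type generating new value-group elements and showing, via $\tp_{\Ltilde}(L/C)\vdash\tp_{\Ltilde}(L/F)$ together with Lemma~\ref{Gamma(LF) is Gamma(L)}, that $\Gamma(Lb)=\Gamma(L)\oplus_{\Gamma(C)}\Gamma(Cb)$, contradicting any genuine interaction; alternatively, cite \cite[Theorem~12.18]{HHM} directly, whose hypotheses (good separated basis over $C$, $k_L$ regular over $k_C$, $\Gamma_L/\Gamma_C$ torsion-free — the latter being automatic here since $L$ is unramified, so $\Gamma_L=\Gamma_C$) are now all verified, to conclude $\tp_{\Ltilde}(a/C\Gamma_L)=\tp_{\Ltilde}(a/C)$ is stably dominated.

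For $(i)\Rightarrow(ii)$: the good separated basis property is immediate from Corollary~\ref{stablydominatedimpliesseparated}, since $L=\dcl_{\Ltilde}(Ca)$ is a regular extension of $C$ by hypothesis and $\tp_{\Ltilde}(a/C)$ is stably dominated. The content is to show $L$ is unramified over $C$, i.e. $\Gamma_L=\Gamma_C$. Here I would argue by contradiction: if $\gamma\in\Gamma_L\setminus\Gamma_C$, then since $\Gamma_L\subseteq\dcl_{\Ltilde}(Ca)\cap\Gamma$, the element $\gamma$ witnesses that $\tp_{\Ltilde}(a/C)$ is not orthogonal to $\Gamma$ — one can find $b$ in the value group sort (e.g. a generic element of $\Gamma$ over $C$, or $\gamma$ itself playing against a suitable conjugate) with $\St_C(aC)\ind_C\St_C(bC)$ (trivially, as $\St_C(bC)$ adds nothing stable when $b\in\Gamma$) yet $\tp_{\Ltilde}(b/C\St_C(aC))\not\vdash\tp_{\Ltilde}(b/Ca)$, because $Ca$ knows $\gamma$ but $C\St_C(aC)$ does not. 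This is precisely the failure of stable domination, contradicting $(i)$; equivalently, invoke Fact~\ref{stably dominated implies orthogonal} directly, since $\gamma\in\dcl_{\Ltilde}(Ca)$ shows $\tp_{\Ltilde}(a/C)$ is not orthogonal to $\Gamma$.

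The main obstacle I anticipate is the $(ii)\Rightarrow(i)$ direction, specifically ensuring the passage from "type implication over a maximal immediate extension $F$" to genuine stable domination is airtight. Stable domination quantifies over all $b\in\widetilde{\monster}$ with $\St_C(aC)\ind_C\St_C(bC)$, not just $b$ inside an immediate extension, so one must argue that the stably-independent $b$'s are exactly those realizable (up to conjugacy over $C\St_C(aC)$) within maximal immediate extensions — this is where orthogonality to $\Gamma$ and the structure of $\St_C$ in ACVF (everything internal to the residue field, which lives in $\Calg$ already) do the real work. The cleanest route is almost certainly to bypass the direct argument entirely: check that the hypotheses of \cite[Theorem~12.18]{HHM} hold (they do, once unramifiedness gives $\Gamma_L=\Gamma_C$ and henselian-plus-regular gives $k_L$ regular over $k_C$ as in the proof of Lemma~\ref{separated_bases_and_immediate_extensions}), and quote that theorem. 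I would flag in the writeup that $C$ being trivially valued is a harmless special case handled by Lemma~\ref{trivially-valued-base} and the fact that over a trivially valued field every extension is unramified.
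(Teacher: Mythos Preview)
Your $(i)\Rightarrow(ii)$ direction is correct and matches the paper: Corollary~\ref{stablydominatedimpliesseparated} gives the good separated basis property, and Fact~\ref{stably dominated implies orthogonal} (orthogonality to $\Gamma$) forces $\Gamma_L=\Gamma_C$.

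The gap is in $(ii)\Rightarrow(i)$. Your ``cleanest route'' is to cite \cite[Theorem~12.18]{HHM2} directly, but you have misremembered its hypotheses: that theorem requires $C$ to be \emph{maximal}, not merely that $L$ has the good separated basis property over $C$. Since $C$ here is an arbitrary subfield, you cannot invoke it as stated. (A central theme of this paper is precisely that maximality in the HHM2 arguments can be weakened to the separated basis hypothesis --- but that requires redoing the relevant proofs, not quoting the original statements.) Your alternative sketch via orthogonality does not stand on its own either: Lemma~\ref{Gamma(LF) is Gamma(L)} requires $\Gamma(F)=\Gamma(C)$, so it cannot be applied with a $b$ chosen to generate \emph{new} value-group elements, and the claim that the type implication over $F$ ``is exactly the statement that adjoining $\Gamma$-information contributes nothing'' is an intuition, not an argument.

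The paper takes a different and shorter route. After obtaining $\tp_{\Ltilde}(L/C)\vdash\tp_{\Ltilde}(L/F)$ from Lemma~\ref{separated_bases_and_immediate_extensions} (which you also reach), it applies Proposition~\ref{linearlydisjoint} to the pair $L$, $\Calg$ over $C$: unramifiedness gives $\Gamma_L\cap\Gamma_{\Calg}=\Gamma_C$, regularity of $k_L$ over $k_C$ (from $L$ henselian and regular over $C$) gives linear disjointness of $k_L$ and $k_{\Calg}$, and the good separated basis property is assumed. The conclusion is $\Gamma_{L\Calg}=\Gamma_{\Calg}$, hence $\Gamma(L\Calg)=\Gamma(\Calg)$. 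Now the base $\Calg$ is algebraically closed, and this equality of value groups is exactly the hypothesis of \cite[Proposition~12.5]{HHM2}, which yields that $\tp_{\Ltilde}(a/\Calg)$ is orthogonal to $\Gamma$, hence stably dominated by Fact~\ref{stably dominated implies orthogonal}. Finally, Fact~\ref{stable domination lifts to acl} transfers stable domination from $\Calg$ down to $C$. The move you are missing is this passage through the algebraically closed base $\Calg$ together with the use of Proposition~12.5 (a statement about a single type over an algebraically closed set) in place of Theorem~12.18 (which would need maximality).
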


\begin{proof}
First assume (ii).  Since $L$ is definably closed, it is henselian.  Thus we may apply Proposition~\ref{separated_bases_and_immediate_extensions} to see that $\tp_{\Ltilde}(L/C)\vdash\tp_{\Ltilde}(L/F)$ for some maximal extension $F$ of $\Calg$. Applying Lemma \ref{linearlydisjoint}, we see that $\Gamma_{L\Calg}=\Gamma_{\Calg}$. It follows that $\Gamma({L\Calg})=\Gamma({\Calg})$, as both are equal to $\Gamma_{\Calg}$. By \cite[Proposition 12.5]{HHM2}, it follows that $\tp(a/\Calg)$ is orthogonal to $\Gamma$, which by Fact \ref{stably dominated implies orthogonal} is equivalent to being stably dominated.  By Fact \ref{stable domination lifts to acl}, $\tp_{\Ltilde}(a/C)$ is stably dominated as $\tp_{\Ltilde}(a/\Calg)$ is stably dominated.

The converse is handled by Corollary \ref{stablydominatedimpliesseparated} along with the fact that stable domination implies orthogonality to the value group.
\end{proof}

\subsection{RV-domination}
As we recalled in Example~\ref{stably not residue field dominated}, stable domination over the value group in an algebraically closed valued field \cite[Theorem 12.18]{HHM2} is implied by the assumptions that the base $C$ is maximal, $k(L)$ is a regular extension of $k(C)$, and $\Gamma_L/\Gamma_C$ is torsion-free.  We have already noted that this is not enough to get residue field domination over the value group. Here we introduce a notion of $\RV$-domination, a property which does hold for the above example, and which in some ways feels closer to stable domination.

The analogue to the stable part of an algebraically closed valued field is here given by an infinite collection of definable subsets of $\RV$, each of which is internal to the residue field. Let $M\supseteq C$ and $S\subset \Gamma$.  Recall that  $\RV_\gamma(M)$ is the fiber of the valuation map in $\RV(M)$ above $\gamma$, for $\gamma\in S$.  
Although this might seem to be very different from $\St_C(M)$, in fact, by 
 \cite[12.9]{HHM2}, when $C$ and $M$ are algebraically closed and $S$ is definably closed, $\acl_{\Ltilde}(\{\RV_\gamma(M)\}_{\gamma \in S})$ is essentially $\St_{CS}(M)$.  Furthermore, \cite[12.10]{HHM2}) gives  equivalent conditions for independence over $C\Gamma_L$ of $\St_{C\Gamma_L}(L)$ and $\St_{C\Gamma_L}(M)$. We take one of these equivalent conditions and use it as the definition of algebraic independence in $\RV$.

\begin{defi}\label{RV independence}
    Let $L$, $M$ be subfields of $\monster$ with $C\subseteq L\cap M$ a valued subfield. Assume that $\Gamma_L\subseteq \Gamma_M$ and $\Gamma_L/\Gamma_C$ is torsion-free.  We say that $\{ \RV_\gamma(L)\}_{\gamma\in\Gamma_L}$ is {\em algebraically independent} from $\{ \RV_\gamma(M)\}_{\gamma\in\Gamma_L}$ over $C\Gamma_L$ if the following condition holds: for every sequence $(a_i)$, $(b_i)$ of elements of $L$, and $(e_i)$ of elements of $M$ such that 
    \begin{description}
    \item[-]
    $(v(a_i))$ is a $\mathbb{Q}$-basis for $\Gamma(L)$ over $\Gamma(C)$, 
    \item[-] $(\res(b_i))$ is a transcendence basis of $k_L$ over $k_C$, and 
    \item[-] for all $i$, $v(a_i) = v(e_i)$, 
    \end{description}
     the sequence $(\res(a_{i}/e_{i}), \res(b_{j}))$ is algebraically independent over $k(M)$ .
\end{defi}

\begin{defi}\label{RV domination}
Let  $C\subseteq L$ be subfields of $\monster$ such that  $\Gamma_L/\Gamma_C$ is torsion-free.  We say $\tp(L/C\Gamma_L)$ is {\em $\RV$-dominated} if for any subfield $M\supseteq C$ such that $\Gamma_M\supset\Gamma_L$, if  $\{ \RV_\gamma(L)\}_{\gamma\in\Gamma_L}$ is algebraically independent from $\{ \RV_\gamma(M)\}_{\gamma\in\Gamma_L}$ over $C\Gamma_L$ then 
\[ \tp(M/C\{\RV_\gamma(L)\}_{\gamma\in\Gamma_L})\vdash \tp(M/L). \]
\end{defi}

\noindent We note that this is not quite domination by RV, which is not a stable set in an algebraically closed valued field, but rather domination by a collection of $k$-internal sets.  However, the more accurate name "$\RV_\gamma$ where $\gamma$ ranges over $\Gamma_L$  domination" is too unwieldy.

In order to prove a domination theorem, we first prove a result about extending isomorphisms. The following theorem was  originally given in \cite[Proposition 12.15]{HHM2} in the case of algebraically closed valued fields, and then in \cite[Theorem 2.9]{EHM} for real closed valued fields. The proof is somewhat subtle, and it is not completely obvious that the changes that are required for the current, more general, context will carry through the machinery. For this reason, we repeat the proof in this paper, but postpone it to the appendix.

\begin{theorem}\label{HHM 12.15}
Let $L$, $M$ be subfields of $\Utilde$ with $C\subseteq L\cap M$ a valued subfield, $k(L)$ a regular extension of $k(C)$, and $\Gamma_L/\Gamma_C$ torsion-free. Assume that $\Gamma_L \subseteq \Gamma_M$, that $\{\RV_\gamma(L)\}_{\gamma\in\Gamma_L}$ is algebraically independent from $\{\RV_\gamma(M)\}_{\gamma\in\Gamma_L}$ over $C\Gamma_L$ and that $L$ has the good separated basis property over $C$. Let $\sigma$ be an automorphism of $\Utilde$ mapping $L$ to $L'$ which is the identity on $C$, $\Gamma_L$, and $k_M$.
Then $\sigma|_L$ can be extended to a valued field isomorphism from $LM$ to $L'M$ which is the identity on $M$. Furthermore,  if $\sigma$ is additionally the identity on $\RV_L$, then $\sigma$ may be extended to $LM$ so that it is the identity on $\RV_{LM}$.
\end{theorem}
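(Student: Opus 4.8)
The plan is to follow the proof of \cite[Proposition~12.15]{HHM2} (and its variant \cite[Theorem~2.9]{EHM}), with the good separated basis property of $L$ over $C$ taking over the role that maximality of $C$ plays there. I would first reduce to the case that $L$ is finitely generated over $C$ as a valued field: any finitely generated subfield $L_0\subseteq L$ again satisfies every hypothesis — it has the good separated basis property over $C$ because separated bases of finite-dimensional subspaces of $L$ consist of elements of those subspaces, $k_{L_0}$ is regular over $k_C$ and $\Gamma_{L_0}/\Gamma_C$ is torsion-free as sub-objects, and extending finite bases to full ones in Definition~\ref{RV independence} shows its $\RV$-fibres stay algebraically independent from those of $M$ — and the resulting valued field isomorphisms extending $\sigma|_{L_0}$ and fixing $M$ assemble into one on $LM$. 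Next, using $\Gamma_L/\Gamma_C$ torsion-free and $\Gamma_L\subseteq\Gamma_M$, fix $(a_i)$ in $L$ with $(v(a_i))$ a $\mathbb{Q}$-basis of $\Gamma(L)$ over $\Gamma(C)$, $(e_i)$ in $M$ with $v(e_i)=v(a_i)$, and $(b_j)$ in $L$ of valuation $0$ with $(\res b_j)$ a transcendence basis of $k_L$ over $k_C$; put $u_i=a_ie_i^{-1}$ and $\bar u_i=\res u_i$. By Definition~\ref{RV independence} the tuple $(\bar u_i,\res b_j)$ is algebraically independent over $k_M$; hence $(\bar u_i)$ is algebraically independent over the compositum $k_Lk_M$, and, $k_L$ being regular over $k_C$, Fact~\ref{Lang} shows $k_L$ and $k_M$ are linearly disjoint over $k_C$. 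These two facts replace the hypotheses ``$\Gamma_L\cap\Gamma_M=\Gamma_C$'' and ``$k_L$, $k_M$ linearly disjoint over $k_C$'' of Lemma~\ref{separatedbasislifts}.

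The core step is to prove that a good separated basis $\{\ell_1,\dots,\ell_k\}$ of a finite-dimensional $C$-subspace of $L$ remains separated over $M$ inside $LM$, and likewise that $\{\sigma\ell_1,\dots,\sigma\ell_k\}$ is separated over $M$ inside $L'M$. Given this, arguing as in Proposition~\ref{linearlydisjoint} yields at once that $L$ (respectively $L'$) and $M$ are linearly disjoint over $C$, that $\Gamma_{LM}=\Gamma_M$, and that $k_{LM}$ is generated over $k_M$ by $k_L$ and the $\bar u_i$; hence the field map $\tau\colon\sum\ell_im_i\mapsto\sum\sigma(\ell_i)m_i$ is a well-defined valued field isomorphism $LM\to L'M$ which is the identity on $M$ and extends $\sigma|_L$. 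To prove the separation statement one adapts Lemma~\ref{separatedbasislifts}: a purported failure $v(\sum_{i\in I}\ell_im_i)>\gamma=\min_i v(\ell_im_i)$ with $|I|\geq 2$ is treated by renormalizing each $\ell_i$ ($i\in I$) as $c_i\big(\prod_s a_s^{n_{is}}\big)\lambda_i$ with $c_i\in C$, $n_{is}\in\mathbb{Z}$, $\lambda_i\in L$ a unit, and substituting $a_s=e_su_s$ to absorb the $\Gamma_M$-part of $v(\ell_i)$ into $M$; dividing by a critical term and passing to residues produces a relation which, read as a Laurent-polynomial identity in the $\bar u_s$ (algebraically independent over $k_Lk_M$), forces one coefficient to be a $k_M$-linear relation among residues of elements of $L$, and linear disjointness of $k_L$ and $k_M$ over $k_C$ turns this into a $k_C$-linear relation contradicting the separatedness of $\{\ell_i\}$ over $C$.

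The main obstacle — and the reason this is relegated to the appendix rather than simply cited — is carrying this through uniformly, above all on the $L'$-side and through a possible immediate (defect) part of $L$ over $C$. Since $\sigma$ is assumed to fix only $C$, $\Gamma_L$ and $k_M$, and in particular not $M$ nor the $e_i$, the algebraic-independence data relating $L'$ to $M$ is not literally transported by $\sigma$ and has to be re-established; following \cite[\S12]{HHM2}, this is organized as an induction on $\rk(\Gamma_L/\Gamma_C)+\trdeg(k_L/k_C)$, peeling off one value-transcendental or residue-transcendental generator at a time, and in the residual immediate step it is exactly the good separated basis property that is used where \cite{HHM2} uses maximality of $C$ to control pseudo-limits. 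Checking that every appeal to maximality in \cite[Proposition~12.15]{HHM2} can indeed be replaced in this way, and that the $\RV$-independence hypothesis furnishes precisely the algebraic independence needed at each step, is the bulk of the work.

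For the final assertion, assume in addition that $\sigma$ is the identity on $\RV_L$. Then $\sigma$ fixes $k_L$ pointwise (as $k_L^{\times}=\RV_0(L)\subseteq\RV_L$), and since $\sigma$ fixes $\rv(a_i)$, the unit $\sigma(a_i)/a_i$ has residue $1$, so $\tau(\bar u_i)=\res(\sigma(a_i)/e_i)=\res(a_i/e_i)=\bar u_i$. Thus $\tau$ is the identity on $k_M$ (being the identity on $M$), on $k_L$, and on each $\bar u_i$; since these generate $k_{LM}$ over $k_M$, the isomorphism $\tau$ is the identity on $k_{LM}$, and it is the identity on $\Gamma_{LM}=\Gamma_M$. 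As in ACVF the $\RV$-sort is, over $\RV_M$, generated by $k_{LM}^{\times}$ — every $\rv(x)$ with $x\in LM$ equals $\rv(m)\cdot\res(x/m)$ for a suitable $m\in M$ with $v(m)=v(x)$ — a valued field isomorphism that is the identity on $M$ and on $k_{LM}$ is the identity on $\RV_{LM}$, which completes the proof.
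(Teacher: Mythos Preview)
Your proposal diverges from the paper's proof in its central technique, and it misidentifies the structure of the argument in \cite[Proposition~12.15]{HHM2}. Neither that proposition nor the paper's appendix proceeds by an induction on $\rk(\Gamma_L/\Gamma_C)+\trdeg(k_L/k_C)$, and there is no ``immediate step'' handling pseudo-limits. The actual device is a \emph{refined valuation}: one composes the residue place $p_v$ with a chain of places $p^{(j)}$ on $k_{LM}$ sending $\res(a_{j+1}/e_{j+1})\mapsto 0$, obtaining a coarser place $p_{v'}$ and a valuation $v'$ on $LM$ for which $\Gamma_{(L,v')}\cap\Gamma_{(M,v')}=\Gamma_{(C,v')}$ and $k_{(L,v')}$, $k_{(M,v')}$ are linearly disjoint over $k_{(C,v')}$. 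One is then squarely in the situation of Proposition~\ref{basicisomorphismtheorem}/Corollary~\ref{revisedbasiclemma} for $v'$, and the resulting $v'$-isomorphism is automatically a $v$-isomorphism since $v'$ refines $v$. (There is also a preliminary reduction, via Lemma~\ref{Remark 3.7 but no e of i} and orthogonality of $k$ and $\Gamma$, to the case where $\sigma$ fixes $\Gamma_M$ as well; you do not mention this.)

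The reason this matters is exactly the obstacle you flagged on the $L'$-side. Your direct Laurent-polynomial argument for lifting a good separated basis of $L$ over $C$ to one over $M$ is a sound idea and does work; indeed the paper eventually recovers the same conclusion, but via the $v'$ detour (separated over $M$ for $v'$ by Lemma~\ref{separatedbasislifts}, then quotient by the convex subgroup $\Delta$ to get separated for $v$). However, to build the valued field isomorphism one also needs $\{\sigma(\ell_i)\}$ separated over $M$, and your argument for that would require the $\RV$-independence hypothesis for the pair $(L',M)$. Since $\sigma$ need not fix $M$ (only $C$, $\Gamma_L$, $k_M$), this is not obviously inherited: one has $\res(\sigma(a_i)/e_i)=\res(\sigma(a_i)/\sigma(e_i))\cdot\res(\sigma(e_i)/e_i)$, and the second factor lies in $k_{\Utilde}$ with no evident control. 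The refined-valuation approach sidesteps this entirely, because the hypotheses of Lemma~\ref{separatedbasislifts} depend only on $\Gamma_L$, $k_L$ (equal to $\Gamma_{L'}$, isomorphic to $k_{L'}$ via $\sigma$) and are therefore symmetric in $L$ and $L'$.

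Your treatment of the final $\RV_{LM}$ assertion is essentially correct and close to the paper's: once one knows $\Gamma_{LM}=\Gamma_M$ and that a good separated basis of $L$ is separated over $M$, one computes $\rv(\sum_i\ell_i m_i)=\sum_{i\in I}\rv(\ell_i)\rv(m_i)$ on the minimal-valuation indices and concludes from $\sigma$ fixing $\RV_L$ and $M$.
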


\begin{theorem}\label{dominationovervaluegroup}
Let $L$, $M$ be subfields of $\mathcal{U}$ with $C\subseteq L\cap M$ a valued subfield, $k(L)$ a regular extension of $k(C)$, $\Gamma_L \subseteq \Gamma_M$ and $\Gamma_L/\Gamma_C$ is torsion-free. Assume that $\{\RV_\gamma(L)\}_{\gamma\in\Gamma_L}$ is algebraically
independent from $\{\RV_\gamma(M)\}_{\gamma\in\Gamma_L}$ over $C\Gamma_L$ and that $L$ has the good separated basis property over $C$. 
Let $\sigma:L\to L'$ be an $\calL$-isomorphism which is the identity on $C$, $\{\RV_\gamma(M)\}_{\gamma\in\Gamma_L} $.
Then $\sigma$ can be extended by the identity on $M$ to an automorphism of $\mathcal{U}$.
\end{theorem}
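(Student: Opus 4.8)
The plan is to reduce the statement, via quantifier elimination (Proposition~\ref{qe}), to a type implication in $\monster$, and to prove that type implication by combining Theorem~\ref{HHM 12.15} with an enlargement of the base, exactly as in the passage from Proposition~\ref{basicisomorphismtheorem} to Corollary~\ref{revisedbasiclemma}. Throughout, write $\calS$ for the family of fibres $\RV_\gamma$ with $\gamma\in\Gamma_L$. Since $\Gamma_L\subseteq\Gamma_M$, each such fibre is non-empty over any parameter set containing $L$ or $M$, so $\calS$ is stably embedded in the strong sense of Remark~\ref{stably embedded}, and Lemma~\ref{Remark 3.7 but no e of i} and Proposition~\ref{resdominationequivalent} apply to it; note $\calS(CL)=\RV_L$ and $\calS(CM)=\{\RV_\gamma(M)\}_{\gamma\in\Gamma_L}$. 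By the form of Proposition~\ref{qe} recorded just before its statement, it suffices to extend $\sigma$ by the identity on $M$ to an $\calL$-isomorphism $LM\to L'M$ inducing an isomorphism of $\RV$-structures; in view of quantifier elimination this is implied by the $\calL$-type implication $\tp(L/C\,\calS(CM))\vdash\tp(L/M)$, which is what I will establish.

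By Proposition~\ref{resdominationequivalent}, taken with $a:=L$ and $b:=M$, the implication $\tp(L/C\,\calS(CM))\vdash\tp(L/M)$ (condition~(ii) there) is equivalent to $\tp(L/C\,\RV_L\,\calS(CM))\vdash\tp(L/M)$ (condition~(iv) there), so it is enough to prove this apparently stronger statement — the one to which Theorem~\ref{HHM 12.15} applies directly. Let $L''\models\tp(L/C\,\RV_L\,\calS(CM))$, witnessed by an $\calL$-isomorphism $\sigma'':L\to L''$ fixing $C$, $\RV_L$ and $\calS(CM)$ pointwise; then $\sigma''$ also fixes $\Gamma_L$ (the image of $\RV_L$ under $v$) and $k_M=\RV_0(M)$. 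Extending $\sigma''$ to an automorphism of $\monster$ (quantifier elimination) and thence to one of $\Utilde$, all the hypotheses of Theorem~\ref{HHM 12.15} hold: its field- and $\RV$-theoretic hypotheses are exactly those assumed here, $\sigma''$ is the identity on $C$, $\Gamma_L$ and $k_M$, and in addition on $\RV_L$. Its ``Furthermore'' clause then yields a valued field isomorphism $\tau'':LM\to L''M$ extending $\sigma''|_L$, equal to the identity on $M$ and on $\RV_{LM}$. Being the identity on $\RV_{LM}$, $\tau''$ is trivially an isomorphism of $\RV$-structures, so by Proposition~\ref{qe} it extends to an automorphism of $\monster$ that is the identity on $M$ and agrees with $\sigma''$ on $L$; hence $\tp(L''/M)=\tp(L/M)$. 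As $L''$ was arbitrary, condition~(iv) holds, hence so does condition~(ii). Applying (ii) to the given $\sigma$, which by quantifier elimination is elementary over $C\,\calS(CM)$, and tracking the witnessing isomorphisms through the equivalences of Proposition~\ref{resdominationequivalent} as in its proof, we conclude that $\sigma$ itself extends by the identity on $M$ to an automorphism of $\monster$.

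The crux — and the reason the base change is needed — is that the given $\sigma$ need not fix the residue field $k_L$, so the valued field isomorphism $LM\to L'M$ provided by the basic part of Theorem~\ref{HHM 12.15} (which requires only that $\sigma$ fix $C$, $\Gamma_L$ and $k_M$) need not be an isomorphism of $\RV$-structures: the induced field isomorphism $k_{LM}\to k_{L'M}$ need not respect the $n$th power predicates of the ambient $\RV$. Passing to the larger base $C\,\RV_L\,\calS(CM)$ via Proposition~\ref{resdominationequivalent} reduces to the case in which $\sigma$ fixes all of $\RV_L$, in particular all of $k_L$; this is precisely the extra hypothesis under which the ``Furthermore'' clause of Theorem~\ref{HHM 12.15} produces an extension fixing $\RV_{LM}$ pointwise, hence one that automatically preserves the whole $\calL$-structure. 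The remaining points — that $\calS$ is strongly stably embedded over the relevant parameter sets, and the bookkeeping with Proposition~\ref{resdominationequivalent} — are routine.
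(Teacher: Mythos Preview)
Your proposal is correct and follows essentially the same route as the paper: reduce the conclusion to the type implication $\tp(L/C\,\calS(CM))\vdash\tp(L/M)$, use Proposition~\ref{resdominationequivalent} (specifically the equivalence of (ii) and (iv)) to enlarge the base by $\calS(CL)=\{\RV_\gamma(L)\}_{\gamma\in\Gamma_L}$, and then invoke the ``Furthermore'' clause of Theorem~\ref{HHM 12.15} to get an extension that is the identity on $\RV_{LM}$, which by Proposition~\ref{qe} lifts to an automorphism of $\monster$. Your exposition is somewhat more explicit than the paper's (you spell out the introduction of $\sigma''$ and the reason the base change is needed), but the argument is the same.
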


\begin{proof}
We wish to show that $\tp(L/C\{\RV_\gamma(M)\}_{\gamma\in\Gamma_L} )$ implies $\tp(L/M)$. Observe that for each $\gamma\in \Gamma_L$, both $\RV_\gamma(L)$ and $\RV_\gamma(M)$ are nonempty. This (by Remark \ref{stably embedded}) allows us to apply $(iv) \Rightarrow (i)$ of Proposition~\ref{resdominationequivalent}, and we see that it suffices to show that 
\[\tp(L/C\{\RV_\gamma(L)\}_{\gamma\in\Gamma_L} \{\RV_\gamma(M)\}_{\gamma\in\Gamma_L} )\vdash\tp(L/M).
\]
The assumption that $\sigma$ fixes $\{\RV_\gamma(M)\}_{\gamma\in\Gamma(L)}$ implies that $\sigma$ fixes $k_M$ and $\Gamma_L$. 
By the above, we may assume that $\sigma$ fixes $\{\RV_\gamma(L)\}_{\gamma\in\Gamma_L}$ as well.
Thus we may apply Theorem~\ref{HHM 12.15}, to get a valued field isomorphism $\sigma:LM\to L'M$ which is the identity on $M$ and on $\RV_{LM}$.  In order to show that $\sigma$ extends to an automorphism of $\mathcal{U}$, it suffices to show that it induces an isomorphism from the structure $\RV_{LM}$ to $\RV_{L'M}$, which is clear as the induced map is the identity.
\end{proof}

\begin{theorem}\label{RVdomination}
Let $L$ be a subfield of $\mathcal{U}$ with $C\subseteq L$ a valued subfield. Assume that $k(L)$ is a regular extension of $k(C)$, $\Gamma_L/\Gamma_C$ is torsion-free and that $L$ has the good separated basis property over $C$.  Then $\tp(L/C\Gamma_L)$ is $\RV$-dominated.
\end{theorem}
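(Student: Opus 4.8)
The plan is to unwind the definition of $\RV$-domination and reduce it to the isomorphism-extension statement of Theorem~\ref{dominationovervaluegroup}. So fix a subfield $M \supseteq C$ with $\Gamma_M \supseteq \Gamma_L$ such that $\{\RV_\gamma(L)\}_{\gamma\in\Gamma_L}$ is algebraically independent from $\{\RV_\gamma(M)\}_{\gamma\in\Gamma_L}$ over $C\Gamma_L$; we must show $\tp(M/C\{\RV_\gamma(L)\}_{\gamma\in\Gamma_L}) \vdash \tp(M/L)$. By Lemma~\ref{type implications}(i), this is equivalent to showing $\tp(L/C\{\RV_\gamma(M)\}_{\gamma\in\Gamma_L}) \vdash \tp(L/M)$, which is exactly the type-implication content proved in Theorem~\ref{dominationovervaluegroup}.

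First I would check that the hypotheses of Theorem~\ref{dominationovervaluegroup} are met: $C \subseteq L \cap M$ is a valued subfield, $k(L)$ is a regular extension of $k(C)$, $\Gamma_L \subseteq \Gamma_M$, $\Gamma_L/\Gamma_C$ is torsion-free, the algebraic independence of the $\RV$-fibers holds by assumption, and $L$ has the good separated basis property over $C$ by assumption. Thus given any $\calL$-isomorphism $\sigma : L \to L'$ fixing $C$ and $\{\RV_\gamma(M)\}_{\gamma\in\Gamma_L}$ pointwise, Theorem~\ref{dominationovervaluegroup} extends $\sigma$ by the identity on $M$ to an automorphism of $\mathcal U$.

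To convert this into the type implication, I would argue as in the proof of Theorem~\ref{dominationovervaluegroup} itself: suppose $L' \models \tp(L/C\{\RV_\gamma(M)\}_{\gamma\in\Gamma_L})$, witnessed by some automorphism $\sigma$ of $\mathcal U$ over $C\{\RV_\gamma(M)\}_{\gamma\in\Gamma_L}$ sending $L$ to $L'$. Since $\sigma$ fixes each $\RV_\gamma(M)$ pointwise, restricting $\sigma$ to $L$ gives precisely the data needed to invoke Theorem~\ref{dominationovervaluegroup}, yielding an automorphism $\tau$ of $\mathcal U$ extending $\sigma|_L$ and fixing $M$; hence $L' = \tau(L) \equiv_M L$, establishing $\tp(L/C\{\RV_\gamma(M)\}_{\gamma\in\Gamma_L}) \vdash \tp(L/M)$. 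Applying Lemma~\ref{type implications}(i) once more gives the desired statement $\tp(M/C\{\RV_\gamma(L)\}_{\gamma\in\Gamma_L}) \vdash \tp(M/L)$, and since $M$ was arbitrary, $\tp(L/C\Gamma_L)$ is $\RV$-dominated.

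I do not expect any real obstacle here: essentially all the work has been front-loaded into Theorem~\ref{HHM 12.15} and Theorem~\ref{dominationovervaluegroup}, and this final theorem is a repackaging of those results through the equivalences of Proposition~\ref{resdominationequivalent} and Lemma~\ref{type implications}. The only mild subtlety is bookkeeping: one should make sure that fixing $\{\RV_\gamma(M)\}_{\gamma\in\Gamma_L}$ really does entail fixing $k_M$ and $\Gamma_L$ (as noted in the proof of Theorem~\ref{dominationovervaluegroup}), so that all the hypotheses line up, and that the nonemptiness of both $\RV_\gamma(L)$ and $\RV_\gamma(M)$ for $\gamma \in \Gamma_L$ lets us legitimately treat $\{\RV_\gamma\}_{\gamma\in\Gamma_L}$ as a strongly stably embedded set $\calS$ in the sense of Remark~\ref{stably embedded} when passing through Proposition~\ref{resdominationequivalent}.
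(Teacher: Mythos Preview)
Your approach is essentially the paper's: invoke Theorem~\ref{dominationovervaluegroup} to get $\tp(L/C\{\RV_\gamma(M)\}_{\gamma\in\Gamma_L})\vdash\tp(L/M)$, then swap the roles of $L$ and $M$ to obtain the statement in Definition~\ref{RV domination}. One precision issue: Lemma~\ref{type implications}(i) alone does \emph{not} give the equivalence you claim, since it keeps the base fixed and would only yield $\tp(L/C\{\RV_\gamma(L)\}_{\gamma\in\Gamma_L})\vdash\tp(L/M)$; to change the base from $\{\RV_\gamma(L)\}$ to $\{\RV_\gamma(M)\}$ you need Proposition~\ref{resdominationequivalent} $(i)\Leftrightarrow(ii)$ (with $\calS=\{\RV_\gamma\}_{\gamma\in\Gamma_L}$), exactly as the paper does and as you yourself note in your final paragraph about nonemptiness and stable embeddedness.
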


\begin{proof}
Let $M$ be a subfield of $\mathcal{U}$ as required in Definition~\ref{RV domination}. Theorem~\ref{dominationovervaluegroup} gives us that $\tp(L/C\{\RV_\gamma(M)\}_{\gamma\in\Gamma_L}) \vdash \tp(L/M)$.  As in the proof of Theorem~\ref{dominationovervaluegroup}, we may apply $(i) \Leftrightarrow (ii)$ of Proposition~\ref{resdominationequivalent} to obtain the type implication in the definition of $\RV$-domination.

\end{proof}

\section{The geometric sorts and domination}
 In the previous section, we worked within the field sort. However, our definition of residue field domination requires us to consider independent sets in any of the sorts. We thus need a mechanism to pull a hypothesis on an arbitrary geometric sort back to the field. This is given to us by the notion of a resolution. 

 The only sorts in $\monster$, apart from the main sort, are $\RV$ and $\Gamma$.  Of course, if one wanted to eliminate imaginaries, one would add more sorts including, but perhaps not limited to, the geometric sorts used to eliminate imaginaries in ACVF.  The results in this section, proven as they are by carrying out the arguments of \cite{HHM2} inside of $\monster$, apply also to the geometric sorts.  Thus for the remainder of this section, we will take $\monster$ to also refer that portion of $\monster^{eq}$ consisting of the geometric sorts. 
 
 \begin{defi}
 Let $A$ be a subset of $\monster$. We say that a set $B$ in the field sort is a {\em resolution } of $A$ if $B$ is algebraically closed (in the sense of $\mathcal{L}$) in the field sort and $A\subseteq\dcl(B)$. The resolution is {\em prime} if $B$ embeds over $A$ into any other resolution.
 \end{defi}
 
 In \cite[Theorem 11.14]{HHM2}, the existence of prime resolutions is shown for algebraically closed valued fields.  Thus given $A\subset \monster \subset \widetilde\monster$, we have a resolution $B\subseteq \widetilde \monster$, though it is not a priori clear that $B$ would be contained in $\monster$.  Below, we give a careful analysis  of the proof of the existence of resolutions, to see that the resolution can be constructed within $\monster$.  
Since the proof involves checking that the arguments of various parts of Chapter 11 of \cite{HHM2} never involve choosing something in $\widetilde\monster$ that necessarily lies outside of $\monster$, we follow the notation of \cite{HHM2} as we walk the reader through this process.  In particular, $K$ refers to the field sort and $R$ to the valuation ring.

\begin{theorem}\label{resolution}
Let $C \subseteq \monster$ be a subfield, and let $e\in \monster$ or more generally, in the geometric sorts of $\monster$. Then $Ce$ admits a resolution $B$ with $k(B)=k(\acl(Ce))$ and $\Gamma(B)=\Gamma(Ce)$. 

\end{theorem}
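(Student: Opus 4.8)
The plan is to reprove Theorem~11.14 of \cite{HHM2}, together with the preparatory lemmas of Chapter~11 of \cite{HHM2} that it invokes, carrying out every construction inside $\monster$ rather than inside $\widetilde{\monster}$ and keeping track of the residue field and value group at each step. It is enough to treat the case in which $e$ is a single element of one of the sorts $\RV$, $\Gamma$, $S_n$ or $T_n$ (for a field element there is nothing to do): a finite tuple is handled by resolving its coordinates one at a time, each time replacing $C$ by the resolution already produced, and an infinite tuple by passing to the directed union, since $k$ and $\Gamma$ commute with directed unions of valued fields.

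The sorts $\RV$ and $\Gamma$ can be handled directly. If $e = v(d)\in\Gamma$, then either $e\in\Gamma(C)$, and then $e\in\dcl(C)$ and we may take $B$ to be the relative algebraic closure of $C$ in the field sort of $\monster$; or $e\notin\Gamma(C)$, and then $e$ has infinite order modulo $\Gamma(C)$ (distinct elements of a torsion-free ordered abelian group have distinct integer multiples), so by saturation we may pick $d$ in the field sort of $\monster$ with $v(d)=e$ realizing the generic type of $\{x:v(x)=e\}$ over $C$, which makes $C(d)/C$ add no residue-field elements and have value group $\langle\Gamma(C),e\rangle$, and we let $B$ be the relative algebraic closure of $C(d)$. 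The sort $\RV$ is similar, splitting according to whether $\rv(d)$ already lies in $\dcl(\acl(Ce))$ (a Hensel lift of a residue, respectively an old value) or is genuinely new (a residue- or value-transcendental adjunction). In all cases $B$ is algebraically closed in the field sort by construction, and the identities for $k(B)$ and $\Gamma(B)$ follow from the bookkeeping below.

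The substantial cases are $e\in S_n$, a lattice $\Lambda$, and $e\in T_n$, an element of a torsor; here we follow the reduction theory of lattices in Chapter~11 of \cite{HHM2}. Since $\Lambda$ is coded over $Ce$ it has a basis in triangular (``adapted'') form whose diagonal entries have valuations in $\Gamma(Ce)$ and whose normalized off-diagonal entries have residues in $k(\acl(Ce))$; we choose such a basis with entries in the field sort of $\monster$, adjoin them to $C$, and take the relative algebraic closure. A torsor element reduces to this after the analysis of Chapter~11 of \cite{HHM2}, differing from an associated lattice by a single coset whose $\RV$-datum lives over $\acl(Ce)$. The point to verify while reading these constructions is that every existence assertion used to build an adapted basis (an element of prescribed valuation, a lift of a prescribed residue, the splitting off of a coset) can be met in $\monster$: this is so because $\monster$ is henselian of equicharacteristic $0$ and, by Proposition~\ref{qe}, the sought element is pinned down up to $\calL$-type by $\RV$- and $\Gamma$-data already present in $\monster$ or in its field-sort algebraic closure, so one is never forced to solve a polynomial equation whose solution escapes $\monster$.

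The main obstacle is the bookkeeping that $k(B)=k(\acl(Ce))$ and $\Gamma(B)=\Gamma(Ce)$ for the $B$ produced above, the delicate inclusions being $\Gamma(B)\subseteq\Gamma(Ce)$ and $k(B)\subseteq k(\acl(Ce))$. The relative algebraic closure step is controlled using that in equicharacteristic $0$ every finite valued field extension is defectless: for a finite algebraic extension $F\subseteq F'$ inside $\monster$ the group $\Gamma(F')/\Gamma(F)$ is finite, so each element of $\Gamma(F')$ is the unique solution in $\Gamma(\monster)$ of an equation $nx=\delta$ with $\delta\in\Gamma(F)$ and hence lies in $\dcl(F)$; thus relative algebraic closure adds no value-group elements beyond those already definable, in particular none beyond $\Gamma(Ce)$. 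For the residue field, residue characteristic $0$ ensures that every element of $k(\acl(Ce))$ is the residue of an element of $\monster$ algebraic over $Ce$ (Hensel-lift its minimal polynomial over $k(\acl(Ce))$), so it is captured by the relative algebraic closure, while the residue- and value-transcendental adjunctions above introduce exactly the prescribed data. Finally one checks --- following Chapter~11 of \cite{HHM2} most closely here --- that the adapted bases have been selected so as to introduce no residue or value beyond those prescribed; granting this, assembling the computations along the induction yields a resolution $B\subseteq\monster$ of $Ce$ with $k(B)=k(\acl(Ce))$ and $\Gamma(B)=\Gamma(Ce)$.
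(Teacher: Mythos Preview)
Your overall strategy—rerunning the constructions of \cite[Chapter~11]{HHM2} inside $\monster$—matches the paper's. The paper, however, does not split by sort: it reduces immediately to $e=(a,b)\in B_n(K)/B_n(R)\times B_m(K)/B_{m,m}(R)$ and then follows the opaque-layering machinery of \cite[Lemmas~11.10--11.13]{HHM2} step by step. The observation that keeps the construction inside $\monster$ is that the subgroups $G_i,H_i$ of \cite[Lemma~11.12]{HHM2} are defined over $\mathbb Z$, so the matrix decompositions hold over $K(\monster)$ just as over $K(\widetilde\monster)$; representatives of the resulting cosets, and codes for finite orbits, are then found in $K(\monster)$. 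Your sort-by-sort account could plausibly be completed, but ``the generic type of $\{x:v(x)=e\}$'' is not unambiguous outside ACVF, and your treatment of $S_n,T_n$ is essentially a pointer back to \cite{HHM2} rather than the verification the theorem calls for.

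The substantive gap is in the bookkeeping. What \cite[Corollary~11.16]{HHM2} hands you is $\dcl_{\Ltilde}(B_0)\cap k=\dcl_{\Ltilde}(Ce)\cap k$ and the analogous statement for $\Gamma$; what you must establish is the same identity with $\dcl$ (that is, $\dcl_{\mathcal L}$) in place of $\dcl_{\Ltilde}$. Your defectlessness argument controls the valuation-theoretic data $k_B,\Gamma_B$ under finite algebraic extensions, but it says nothing about the additional $\mathcal L$-structure carried by $k$ and $\Gamma$, so a priori $k(B_0)=\dcl_{\mathcal L}(B_0)\cap k$ could be strictly larger than the field generated by the $\Ltilde$-definable residues. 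The paper closes this using Proposition~\ref{qe}: if $d\in k(B_0)$ is witnessed by an $\mathcal L$-formula, quantifier elimination lets one take that formula in the $\RV$-sort with parameters $\rv(t(B_0))$ for field-sort terms $t$; since field-sort terms agree in $\mathcal L$ and $\Ltilde$, one has $\rv(t(B_0))\in\dcl_{\Ltilde}(B_0)$, and by stable embeddedness of $k$ in $\RV$ the parameter may be taken in $\dcl_{\Ltilde}(B_0)\cap k=\dcl_{\Ltilde}(Ce)\cap k$, whence $d\in k(Ce)$. This appeal to the specific shape of the language $\mathcal L$ is the step at which the $\Ltilde$-statement is upgraded to the $\mathcal L$-statement, and it is absent from your argument.
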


\begin{proof}
We follow the construction in Chapter 11 of \cite{HHM2}, with the notation there. First, as in Theorem 11.14, we can assume that $e=(a,b)$, where $a\in B_n(K)/B_n(R)$ and $b\in B_{m}(K)/B_{m,m}(R)$. The next step is to replace $e$ with an opaque layering of it (in the sense of ACVF). We need not concern ourselves here with the precise details of this, because we will follow the construction in Lemmas 11.10 to 11.13 exactly.  We need only check that the construction can be carried out in $\monster$ and does not require elements of $\widetilde \monster \setminus \monster$. Through multiple applications of Lemma 11.10 and Corollary 11.11, $a=gB_n(R)$ is replaced successively by pairs $(h(H\cap F),\ell(N\cap F^h))$, where $H,F$ are subgroups of $B_n(K)$, $N$ is a normal subgroup of $B_n(K)$, $h\in H$, $\ell\in N$. Those subgroups are some of the $G_i$ and $H_i$ defined in Lemma 11.12, and are defined over $\mathbb Z$. The decomposition asserted in that lemma holds over any ring, in particular it holds over our field $K(\monster)$. This shows that we can at each step take $h$ and $\ell$ in $K(\monster)$. The same is true for $b$.

So we have replaced $e$ by a sequence $\bar a=(a_0,\ldots,a_{N-1})$ satisfying the conditions of Lemma 11.4 in the sense of ACVF and lying in $\monster$. We therefore have $\dcl_{\widetilde \calL}(C\bar a)=\dcl_{\widetilde \calL}(Ce)$. Then we can find $C\subseteq D\subseteq K(\monster)$ such that $C\bar a\subset \acl_{\widetilde \calL}(D)$ and $D$ is atomic over $C\bar a$ (in $\widetilde \calL$). This is by Lemma 11.4: all we do is take representatives of the equivalence relations defining the $a_i$'s (here $D = B_0\cup C$ in the notation of Lemma~11.4). We can find such elements in $K(\monster)$ since $\bar a$ is in $\monster$.  Note that by the construction in Lemma 11.4, each representative is either in $D$ or algebraic over $D$.  In particular, each representative is contained in $\acl_{\widetilde \calL}(D)\cap K(\monster)$.

Next, we want to expand $D$ so that it remains atomic, but so that $C\bar a$ lies in the definable closure rather than the algebraic closure. We follow exactly the argument of Corollary 11.9, needing only to check that the construction does not leave $\monster$. We know that $\bar a$ is in the definable closure of some $b\in \acl_{\widetilde \calL}(D)\cap K(\monster)$ (namely the tuple of representatives).  The orbit (in the sense of $\widetilde \monster$) of $b$ over $D\bar a$ is finite, and hence coded by some $b'\in K(\widetilde{\monster})$. As $b'$ is definable over a subset of $\monster$, in particular $b'$ is in $K(\monster)$. We thus have $b'\in \dcl_{\widetilde \calL}(D\bar a)$ with $\bar a\in \dcl_{\widetilde \calL}(Db')$ and $\tp_{\Ltilde}(Db'/C\bar a)$ is isolated.   (Note that our $b$ is denoted $e$ in Corollary 11.9, and our $b'$ is denoted $e'$.) 

From Corollary 11.16, we know that $Ce$ admits a $\dcl$-resolution $B_0$ with $\dcl_{\Ltilde}(B_0)\cap k=\dcl_{\Ltilde}(Ce)\cap k$ and $\dcl_{\Ltilde}(B_0)\cap\Gamma=\dcl_{\Ltilde}(Ce)\cap\Gamma$. Referring to the proof of Corollary 11.16, we see that this $\dcl$-resolution is the one obtained in Corollary 11.9.  That is, $B_0=Db'$, with $D$ and $b'$ as above. Let $B=\acl(Db')\cap K(\monster)$. To see that $B$ is the required resolution, we just need to verify that $k(B)=k(\acl(Ce))$ and $\Gamma(B) = \Gamma(Ce)$.

First we show that $k(B_0)=k(Ce)$.  It is clear that $k(B_0) \supseteq k(Ce)$, so take $d\in k(B_0)$. witnessed by $\varphi$.  By quantifier elimination, $\varphi$ is an $\calL$-formula in the $\RV$-sort and has the form $\varphi(x, \rv(t(Db')))$ where $t$ is a term.  Since there are no additional terms in $\calL$ in the field sort, this is an $\Ltilde$ term, and thus $\rv(t(Db'))\in \dcl_{\Ltilde}(Db')$.    From the proof that $k$ is a stably embedded subset of $\RV$, we may assume $\rv(t(Db'))\in k$, and thus in $\dcl_{\Ltilde}(B_0)\cap k=\dcl_{\Ltilde}(Ce)\cap k$.  Thus $\varphi$ also witnesses that $d\in k(Ce)$.

Since it is clear that $k(B) \supseteq k(\acl(Ce))$, take $d_1\in\acl(B_0)\cap k$.  Suppose the conjugates of $d_1$ over $B_0$ are $d_1, \dots, d_n$.  Then the set $\{d_1, \dots, d_n\}$ is in the definable closure of $B_0$ and, as fields code finite imaginaries, the set is coded by an element of $k(B_0)=k(Ce)$.  Thus $d_1\in\acl(Ce)$, as desired.

A similar argument shows that $\Gamma(B)=\Gamma(\dcl_{\widetilde \calL}(Ce))$. 
\end{proof}

By the following lemma, we see that proving a type implication for such a resolution is sufficient to give us the desired type implication that we need in the definition of residue field domination. 

\begin{lemma}\label{domination and acl}
Fix a set of parameters $C$. Suppose that $B$ is a resolution of $Cb$ with $k(B)=k(\acl(Cb))$, and suppose that $\tp(a/Ck(B))\vdash \tp(a/CB)$.  Then $\tp(a/Ck(Cb))\vdash \tp(a/Cb)$.
\end{lemma}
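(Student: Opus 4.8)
The plan is to bounce both the hypothesis and the desired conclusion off Proposition~\ref{resdominationequivalent} (taken with $\calS=k$), using as the only extra input the fact that $b\in\dcl(B)$.

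First I would record that, since $B$ is a resolution of $Cb$, we have $Cb\subseteq\dcl(B)$; in particular $C\subseteq\dcl(B)$, so $\dcl(CB)=\dcl(B)$ and hence $\calS(CB)=k(CB)=k(B)$. Consequently the hypothesis $\tp(a/Ck(B))\vdash\tp(a/CB)$ is exactly clause (i) of Proposition~\ref{resdominationequivalent} applied with $B$ in the role of $a$ and $a$ in the role of $b$. By the equivalence (i)$\Leftrightarrow$(ii) of that proposition I then obtain
\[
\tp(B/Ck(Ca))\vdash\tp(B/Ca).
\]

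Second, I would transfer this implication from $B$ down to $b$ using $b\in\dcl(B)$. Concretely: given $b'\equiv_{Ck(Ca)}b$, choose $\sigma\in\Aut(\monster/Ck(Ca))$ with $\sigma(b)=b'$ and set $B'=\sigma(B)$; then $B'\equiv_{Ck(Ca)}B$, so by the displayed implication $B'\equiv_{Ca}B$, witnessed by some $\tau\in\Aut(\monster/Ca)$ with $\tau(B)=B'$. Writing $b$ as the value of a $\emptyset$-definable partial function $f$ at a finite subtuple $B_0$ of $B$, we have $\sigma(b)=f(\sigma(B_0))$ and $\tau(b)=f(\tau(B_0))$; since $\tau(B)=B'=\sigma(B)$ as tuples, $\tau(B_0)=\sigma(B_0)$ and therefore $\tau(b)=\sigma(b)=b'$, so $b'\equiv_{Ca}b$. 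This yields
\[
\tp(b/Ck(Ca))\vdash\tp(b/Ca).
\]
Finally, applying Proposition~\ref{resdominationequivalent} once more — the equivalence (i)$\Leftrightarrow$(ii) for the original pair $(a,b)$ — the implication just obtained is exactly equivalent to $\tp(a/Ck(Cb))\vdash\tp(a/Cb)$, which is the conclusion of the lemma.

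There is no real obstacle here; the only step that is not purely formal is the middle transfer step, and even there the content is just the routine observation that an element definable from $B$ is moved coherently with $B$ by any automorphism. I would remark in passing that the hypothesis $k(B)=k(\acl(Cb))$ is not actually used in this proof — it is simply the form in which Theorem~\ref{resolution} delivers the resolution — and that the argument only needs $Cb\subseteq\dcl(B)$.
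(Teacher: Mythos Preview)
Your proof is correct, and it takes a genuinely different route from the paper's. The paper argues directly at the level of formulas: given $\varphi(x,b)\in\tp(a/Cb)$, it finds $\psi(x,d_1)\in\tp(a/Ck(B))$ implying it, then takes the finite set $D$ of $Cb$-conjugates of $d_1$ and observes that $\bigvee_{d_i\in D}\psi(x,d_i)$ is over $Ck(Cb)$ and still implies $\varphi(x,b)$. The finiteness of $D$ is exactly where the paper uses $k(B)=k(\acl(Cb))$: one needs $d_1\in\acl(Cb)$.

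Your argument instead passes twice through Proposition~\ref{resdominationequivalent}, reducing the question to the trivial transfer ``$\tp(B/Ck(Ca))\vdash\tp(B/Ca)$ implies $\tp(b/Ck(Ca))\vdash\tp(b/Ca)$ whenever $b\in\dcl(B)$''. This is cleaner and, as you note, shows that the hypothesis $k(B)=k(\acl(Cb))$ is superfluous for the lemma itself; it is only needed elsewhere (e.g.\ in Lemma~\ref{test with main sort}) to propagate the independence assumption from $k(Cb)$ to $k(B)$. The paper's approach has the minor advantage of being self-contained and exhibiting the implying formula explicitly, but yours makes better use of the machinery already set up and yields a slightly stronger statement.
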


\begin{proof}
Take $\varphi(x,b)\in \tp(a/Cb)$.  Since $b\in \dcl(B)$, there is $\psi(x,d_1)\in \tp(a/Ck(B))$ which implies $\varphi(x,b)$.  Consider the set $D=\{d_1, \dots, d_n\}$ of conjugates of $d_1$ over $Cb$.  This set is definable over $Cb$, and thus so is $\bigvee_{d_i \in D}\psi(x, d_i)$.  This latter formula is in $\tp(a/Ck(Cb))$ and implies $\varphi(x,b)$ as desired.
\end{proof}

The following lemma allows us  to assume that elements are in the main sort when trying to prove domination results..

\begin{lemma}\label{test with main sort}
Fix $\tp(a/C)$.  The following are equivalent:
\begin{enumerate}[(i)]
    \item for any $b\in \monster$, if $k(aC)\aclind_{k(C)} k(bC)$, then $\tp(b/C k(Ca))\vdash \tp(b/Ca)$. 
    \item for any $b$ in the field sort of $\monster$, if $k(aC)\aclind_{k(C)} k(bC)$, then $\tp(b/C k(Ca))\vdash \tp(b/Ca)$. 
\end{enumerate}
\end{lemma}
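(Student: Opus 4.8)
\emph{Proof plan.} The direction (i)$\Rightarrow$(ii) is immediate: every tuple from the field sort is in particular a tuple from $\monster$, so (i) tests $\tp(a/C)$ against more tuples $b$ than (ii) does. The work is the converse, and the idea is to replace an arbitrary tuple $b$ from the geometric sorts by a resolution of $Cb$ lying in the field sort, invoke the field-sort hypothesis (ii) there, and transport the conclusion back down to $b$.

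So assume (ii) and fix $b\in\monster$ (in the geometric sorts) with $k(aC)\aclind_{k(C)} k(bC)$; we must show $\tp(b/Ck(Ca))\vdash\tp(b/Ca)$. By the equivalence of (i) and (ii) in Proposition~\ref{resdominationequivalent} (with $\calS=k$), it is enough to prove $\tp(a/Ck(Cb))\vdash\tp(a/Cb)$. First apply Theorem~\ref{resolution} to obtain a resolution $B\subseteq\monster$ of $Cb$ in the field sort with $k(B)=k(\acl(Cb))$; since $Cb\subseteq\dcl(B)$ we also have $k(CB)=k(B)$. I would then check that (ii) may be applied with $B$ in the role of $b$, i.e.\ that $k(aC)\aclind_{k(C)} k(CB)=k(\acl(Cb))$. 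For this, note that every $d\in\acl(Cb)\cap k$ is field-theoretically algebraic over $k(Cb)$: the finite $\Aut(\monster/Cb)$-orbit of $d$ is a finite subset of $k$, so its elementary symmetric functions are fixed by $\Aut(\monster/Cb)$, hence lie in $\dcl(Cb)\cap k=k(Cb)$, and they exhibit $d$ as a root of a polynomial over $k(Cb)$ (the same ``fields code finite sets'' observation used in the proof of Proposition~\ref{base acl}). Thus $k(\acl(Cb))$ and $k(Cb)$ have the same field-theoretic algebraic closure, and since the relation $\aclind$ on $k$ is defined purely in terms of field-algebraic independence, it is insensitive to this enlargement; so $k(aC)\aclind_{k(C)} k(bC)$ upgrades to $k(aC)\aclind_{k(C)} k(CB)$.

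Now (ii) applied to $B$ gives $\tp(B/Ck(Ca))\vdash\tp(B/Ca)$, and another application of Proposition~\ref{resdominationequivalent} (again (i)$\Leftrightarrow$(ii), using $k(CB)=k(B)$) rewrites this as $\tp(a/Ck(B))\vdash\tp(a/CB)$. Finally, $B$ is a resolution of $Cb$ with $k(B)=k(\acl(Cb))$, so Lemma~\ref{domination and acl} applies verbatim and yields $\tp(a/Ck(Cb))\vdash\tp(a/Cb)$, completing the reduction. I expect the only step requiring real care to be the transfer of the algebraic-independence hypothesis from $k(bC)$ to $k(\acl(Cb))$ described above; everything else is routine bookkeeping with the $a\leftrightarrow b$ symmetry of Proposition~\ref{resdominationequivalent}, the resolution of Theorem~\ref{resolution}, and the descent in Lemma~\ref{domination and acl}. (As throughout the paper, I am assuming $C$ is a subfield, so that Theorem~\ref{resolution} applies.)
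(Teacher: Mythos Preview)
Your proof is correct and follows essentially the same route as the paper's own argument: resolve $Cb$ in the field sort via Theorem~\ref{resolution}, apply (ii) to the resolution $B$, swap the roles of $a$ and $b$ using Proposition~\ref{resdominationequivalent}, and descend via Lemma~\ref{domination and acl}. Your justification that $k(aC)\aclind_{k(C)} k(bC)$ upgrades to $k(aC)\aclind_{k(C)} k(B)$ is a welcome addition, as the paper simply asserts this step without comment.
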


\begin{proof}
Clearly, (i) implies (ii).  For the other direction, assume that (ii) holds and choose $b\in \monster$ with $k(aC)\aclind_{k(C)} k(bC)$.  Choose a resolution $B$ of $Cb$ with $k(B)=k(\acl(Cb))$.  Since $k(aC)\aclind_{k(C)} k(B)$, we conclude by (2) that $\tp(B/Ck(Ca)\vdash \tp(B/Ca)$ and thus by the equivalence of (i) and (ii) in Proposition \ref{resdominationequivalent} that $\tp(a/C k(B))\vdash \tp(a/CB)$. Then we may apply Lemma \ref{domination and acl} to obtain $\tp(a/Ck(Cb))\vdash \tp(a/Cb)$.  We apply Proposition \ref{resdominationequivalent}  again to obtain $\tp(b/C k(Ca))\vdash \tp(b/Ca)$.
\end{proof}

As noted in Remark \ref{residue field domination in the field sort}, Lemma \ref{test with main sort} together with Corollary \ref{revisedbasiclemma} gives us the following residue field domination result.

\begin{theorem}\label{residue field domination for unramified extensions}
    Let $C\subseteq \monster$ be a subfield and let $a$ be a (possibly infinite) tuple of field elements such that the field generated by $Ca$ is an unramified extension of $C$ with the good separated basis property over $C$, and such that $k(Ca)$ is a regular extension of $k(C)$.  Then $\tp(a/C)$ is residue field dominated.
\end{theorem}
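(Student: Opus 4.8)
The proof is the one indicated in the sentence just before the statement: it combines Lemma~\ref{test with main sort} with Corollary~\ref{revisedbasiclemma} (together with Fact~\ref{Lang}), and carries essentially no new content. The first step is to apply Lemma~\ref{test with main sort} to reduce the definition of residue field domination to witnesses $b$ in the field sort: it suffices to show that for every tuple $b$ of field elements of $\monster$ with $k(aC)\aclind_{k(C)}k(bC)$ one has $\tp(b/Ck(Ca))\vdash\tp(b/Ca)$.

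Fix such a $b$, and let $M$ be the valued subfield generated by $Ca$ and $L$ the valued subfield generated by $Cb$, each regarded as an $\calL$-substructure of $\monster$, so that $C\subseteq L\cap M$ is a valued subfield, $k_M=k(Ca)$, $\Gamma_M=\Gamma(Ca)$, and the hypotheses on the field generated by $Ca$ become hypotheses on $M$. I would then apply Corollary~\ref{revisedbasiclemma} to the triple $(L,M,C)$, verifying its three hypotheses. For the value group: unramifiedness gives $\Gamma_M=\Gamma_C$, and since $\Gamma_C\subseteq\Gamma_L$ this yields $\Gamma_L\cap\Gamma_M=\Gamma_C$. For the residue field: the assumption $k(aC)\aclind_{k(C)}k(bC)$ says $k_L\aclind_{k_C}k_M$ (field-theoretic algebraic independence is symmetric), and since $k_M=k(Ca)$ is by hypothesis a regular extension of $k_C=k(C)$, Fact~\ref{Lang} upgrades this to linear disjointness of $k_L$ and $k_M$ over $k_C$. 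The third hypothesis, that $M$ has the good separated basis property over $C$, is one of our assumptions.

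Corollary~\ref{revisedbasiclemma} then gives $\tp(L/C\Gamma_M k_M)\vdash\tp(L/M)$. Because $\Gamma_M=\Gamma_C\subseteq\dcl(C)$, the base $C\Gamma_M k_M$ has the same definable closure as $Ck_M=Ck(Ca)$, so this reads $\tp(L/Ck(Ca))\vdash\tp(L/M)$. Since $b\in L$ and $Ca\subseteq M$, restricting to the coordinate of the enumeration of $L$ carrying $b$ yields $\tp(b/Ck(Ca))\vdash\tp(b/Ca)$: given $b'\equiv_{Ck(Ca)}b$, an automorphism of $\monster$ fixing $Ck(Ca)$ and sending $b$ to $b'$ carries $L$ to a copy realizing $\tp(L/Ck(Ca))$, hence realizing $\tp(L/M)$, so reading off the $b$-coordinate gives $b'\equiv_M b$ and therefore $b'\equiv_{Ca}b$. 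This is the required implication, so $\tp(a/C)$ is residue field dominated.

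I do not expect any step to be a genuine obstacle: the substance lives in Corollary~\ref{revisedbasiclemma} and in the resolution machinery behind Lemma~\ref{test with main sort}. The only points needing attention are bookkeeping — assigning the roles of $L$ and $M$ so that the good separated basis hypothesis lands on $M$ while the conclusion still controls $\tp(b/\cdot)$, and noticing that ``unramified'' is used twice: as $\Gamma_L\cap\Gamma_M=\Gamma_C$ (a hypothesis of Corollary~\ref{revisedbasiclemma}) and as $\Gamma_M\subseteq\dcl(C)$ (needed to erase $\Gamma_M$ from the base), the latter being precisely why one obtains residue field domination and not merely domination over the value group.
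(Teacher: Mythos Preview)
Your proposal is correct and follows exactly the route the paper takes: the paper's proof is a single sentence invoking Lemma~\ref{test with main sort} together with Corollary~\ref{revisedbasiclemma} (via Remark~\ref{residue field domination in the field sort}), and you have simply unfolded that invocation, including the use of Fact~\ref{Lang} to pass from algebraic independence to linear disjointness and the observation that $\Gamma_M=\Gamma_C$ lets one drop $\Gamma_M$ from the base. There is nothing to add.
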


Using Theorem \ref{residue field domination for unramified extensions} (or rather its component pieces: Corollary \ref{revisedbasiclemma} and Lemma \ref{test with main sort}) we are able to push the above result a bit further and relate stable domination in the algebraically closed field to residue field domination in the henselian field. Recall that we write $C^+ = \acl(C)\cap \dcl(Ca)$.

\begin{theorem}\label{theorem:domfield}
Let $C\subseteq \monster$ be a subfield and let $a\in \monster$. Assume that $tp_{\widetilde \calL}(a/C)$ is stably dominated. Then $\tp(a/C^+)$ is residue field dominated.
\end{theorem}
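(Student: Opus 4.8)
The plan is to push the stable domination hypothesis over into $\widetilde\monster$, extract the two purely valuation-theoretic conditions via Theorem~\ref{domination-equivalence}, and then feed these back into Theorem~\ref{residue field domination for unramified extensions}. By Proposition~\ref{base acl} it suffices to prove that $\tp(a/\acl(C))$ is residue field dominated, so set $C_1=\acl(C)$ and aim for that. (We may assume $a$ is a tuple of field elements.) The first real step is to transfer stable domination to this base: since $\tp_{\Ltilde}(a/C)$ is stably dominated, Fact~\ref{stable domination lifts to acl} gives that $\tp_{\Ltilde}(a/\acl_{\Ltilde}(C))$ is stably dominated; as $C_1\subseteq\acl_{\Ltilde}(C)$ and $\acl_{\Ltilde}(C_1)=\acl_{\Ltilde}(C)$, a second application of Fact~\ref{stable domination lifts to acl} yields that $\tp_{\Ltilde}(a/C_1)$ is stably dominated.

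Next let $L=\dcl_{\Ltilde}(C_1a)$ in the valued field sort, that is, the henselization of the field generated by $C_1$ and $a$, which lies inside $\monster$ since $\monster$ is henselian. Because $C_1=\acl(C)$ is relatively algebraically closed in the field sort of $\monster$ (field-theoretic algebraicity of a field element over a set of field elements implies $\calL$-algebraicity over it), $C_1$ is relatively algebraically closed in $L$, and hence in residue characteristic $0$ the extension $L/C_1$ is regular. Theorem~\ref{domination-equivalence}, applied inside $\widetilde\monster$ with base $C_1$, therefore tells us that $L$ has the good separated basis property over $C_1$ and that $L$ is an unramified extension of $C_1$.

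Now I would descend from $L$ to the field $F$ generated by $C_1$ and $a$. Since $L$ is the henselization of $F$ it is an immediate extension, so $k_F=k_L$ and $\Gamma_F=\Gamma_L=\Gamma_{C_1}$, i.e.\ $F$ is unramified over $C_1$; every finite-dimensional $C_1$-subspace of $F$ is a finite-dimensional $C_1$-subspace of $L$, so $F$ inherits the good separated basis property over $C_1$; and $k(C_1a)=k_F=k_L$ is a regular extension of $k(C_1)=k_{C_1}$, because $L$ is henselian and regular over $C_1$ with residue characteristic $0$ (a polynomial over $k_{C_1}$ with a root in $k_L$ would lift to one over $C_1$ with a root in $L$, exactly as in the proof of Lemma~\ref{separated_bases_and_immediate_extensions}). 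Thus the hypotheses of Theorem~\ref{residue field domination for unramified extensions} hold with base $C_1$ (using in particular its components, Corollary~\ref{revisedbasiclemma} and Lemma~\ref{test with main sort}), so $\tp(a/C_1)$ is residue field dominated, and Proposition~\ref{base acl} finishes the proof.

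The substantive work here is not a single hard estimate but a chain of bookkeeping points that each need care: (i) transferring stable domination from $C$ through $\acl_{\Ltilde}(C)$ down to $C_1=\acl(C)$, which rests on the identification $\acl_{\Ltilde}(C_1)=\acl_{\Ltilde}(C)$; (ii) verifying that $L=\dcl_{\Ltilde}(C_1a)$ really is a regular extension of $C_1$, so that Theorem~\ref{domination-equivalence} is applicable; and (iii) reconciling the model-theoretic residue field $k(C_1a)$ with the residue field of the algebraically generated subfield $F$, and running Theorem~\ref{residue field domination for unramified extensions} over the base $\acl(C)$, which is not purely a field in the sort-theoretic sense — this last point is precisely why Proposition~\ref{base acl} and Lemma~\ref{test with main sort} were set up as they were, and it is where I expect the proof to require the most attention.
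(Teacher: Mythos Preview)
For field elements your route is essentially the paper's own, just factored through the packaged statements: where you invoke Theorem~\ref{domination-equivalence}, the paper applies its ingredients (Corollary~\ref{stablydominatedimpliesseparated} plus orthogonality to $\Gamma$) directly, and your appeal to Theorem~\ref{residue field domination for unramified extensions} unpacks to exactly the paper's use of Corollary~\ref{revisedbasiclemma} together with Lemma~\ref{test with main sort}. One small slip: the identification $k(C_1a)=k_F=k_L$ is not correct in general, since $k(C_1a)=\dcl(C_1a)\cap k$ is computed in $\calL$ and the $\calL$-definable closure in the field sort may strictly contain the henselization $L$ (think of a real closed valued field, where the residue field carries extra definable structure). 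Your desired conclusion survives anyway: since $C_1=\acl(C)$, any $\beta\in k(C_1a)$ field-algebraic over $k_{C_1}$ is $\calL$-algebraic over $C_1$ and hence already lies in $C_1\cap k=k_{C_1}$, so $k(C_1a)$ is regular over $k(C_1)$.

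The genuine gap is the parenthetical ``we may assume $a$ is a tuple of field elements.'' The statement allows $a$ in any sort, and this reduction is not free: one must produce a resolution $A\subseteq K(\monster)$ of $\acl(C)a$ whose $\Ltilde$-type over $\acl(C)$ is still stably dominated, apply the field-element case to $A$, and then descend back to $a$. The paper does exactly this in its second paragraph, using Theorem~\ref{resolution} to find a prime resolution inside $\monster$ with $k(A)=k(\acl(\acl(C)a))$, invoking \cite[Lemma~10.14]{HHM2} together with primality to ensure that $\tp_{\Ltilde}(A/\acl(C))$ remains orthogonal to $\Gamma$ (hence stably dominated), and finally Lemma~\ref{domination and acl} to pull the type implication back from $A$ to $a$. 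None of these steps is hard, but none is automatic either, and your proposal omits them entirely.
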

\begin{proof} First assume that $a$ is a field element. By Fact~\ref{stable domination lifts to acl}, also 
 $\tp_{\widetilde \calL}(a/\acl(C))$ is stably dominated. Choose $b$ with $k(\acl(C)a)\aclind_{\acl(C)} k(\acl(C)b)$. By Lemma \ref{test with main sort}, we may assume that $b$ is a field element. Let $L$ be $\dcl(\acl(C)b)$ and let $M$ be $\dcl(\acl(C)a)$. Since $M$ is definably closed in $\mathcal{L}$ and thus also in $\Ltilde$, it is a henselian valued field, and trivially $M$ is a regular extension of $\acl(C)$, so we may use Corollary  \ref{stablydominatedimpliesseparated} to see that $M$ has the good separated basis property over $\acl(C)$. 
Note that $\Gamma_M=\Gamma_{\acl(C)}$ by stable domination, so trivially $\Gamma_L\cap\Gamma_M=\Gamma_{\acl(C)}$. Since $k(\acl(C)a)\aclind_{\acl(C)} k(\acl(C)b)$, Fact \ref{Lang} implies $k_L$ and $k_M$ are linearly disjoint over $\acl(C)$.
Thus Corollary \ref{revisedbasiclemma} implies that $\tp(b/\acl(C)k(\acl(C)a))  \vdash \tp(b/\acl(C)a)$ and hence $\tp(a/\acl(C))$ is residue field dominated. By Proposition~\ref{base acl}, $\tp(a/C^+)$ is residue field dominated.

Now let $a$ be in any of the sorts. 
By Fact~\ref{stable domination lifts to acl} and Fact~\ref{stably dominated implies orthogonal}
$\tp_{\widetilde \calL}(a/\acl(C))$ is orthogonal to $\Gamma$. By \cite[Lemma 10.14]{HHM2}, there is a resolution $B$ of $\acl(C)a$  such that $\tp(B/C)$ is orthogonal to $\Gamma$. On the other hand, we know by Theorem \ref{resolution} and \cite[Theorem 11.14]{HHM2}, that $\acl(C)a$ has a prime resolution $A$ that only adds algebraic elements to $k(Ca)$ and lies in $\monster$. By primality, $A$ embeds into $B$ and hence its $\Ltilde$-type is also orthogonal to $\Gamma$, so also stably dominated. 
 By Theorem \ref{theorem:domfield}, $\tp(A/C^+)$ is residue field dominated. Consider any $b\in \monster$ such that $k(C^+b)\aclind_{C^+} k(C^+a)$. Since $k(A)=\acl(k(C^+a))$, we have $k(C^+b)\aclind_{C^+} k(A)$.  By residue field domination for $\tp(A/C^+)$, we have $\tp(b/C^+ k(A))\vdash \tp(b/C^+A)$. Now apply Lemma \ref{domination and acl} to see that $\tp(b/C^+k(C^+a))\vdash\tp(b/C^+A)$.

\end{proof}

\section{Appendix: proof of Theorem \ref{HHM 12.15}}
This proof is essentially the same as that given in \cite[Proposition 12.15]{HHM2} in the case of algebraically closed valued fields, and then in \cite[Theorem 2.9]{EHM} for real closed valued fields. In the other two papers, the fields $L$, $M$, and $C$ are assumed to be algebraically (respectively real) closed. We show that this hypothesis is not really needed. We also show that the prior assumption that $C$ is maximal can be replaced with the good separated basis property for $L$ over $C$.  Furthermore, we prove the additional conclusion that if $\sigma$ is the identity on $\RV_L$ as well, then $\sigma$ extends by the identity to all of $\RV_{LM}$.

\begin{reptheorem}{HHM 12.15}
Let $L$, $M$ be subfields of $\Utilde$ with $C\subseteq L\cap M$ a valued subfield, $k(L)$ a regular extension of $k(C)$, and $\Gamma_L/\Gamma_C$ torsion-free. Assume that $\Gamma_L \subseteq \Gamma_M$, that $\{\RV_\gamma(L)\}_{\gamma\in\Gamma_L}$ is algebraically independent from $\{\RV_\gamma(M)\}_{\gamma\in\Gamma_L}$ over $C\Gamma_L$ and that $L$ has the good separated basis property over $C$. Let $\sigma$ be an automorphism of $\Utilde$ mapping $L$ to $L'$ which is the identity on $C$, $\Gamma_L$, and $k_M$.
Then $\sigma|_L$ can be extended to a valued field isomorphism from $LM$ to $L'M$ which is the identity on $M$. Furthermore,  if $\sigma$ is additionally the identity on $\RV_L$, then $\sigma$ may be extended to $LM$ so that it is the identity on $\RV_{LM}$.
\end{reptheorem}

\begin{proof}
In outline, we begin by perturbing the valuation to a finer one, $v'$, which satisfies the 
hypothesis that $\Gamma_{(L,v')}\cap \Gamma_{(M,v')} =\Gamma_{(C,v')}$. We can then apply Proposition~\ref{basicisomorphismtheorem}
to extend $\sigma|_L$ to a $v'$-valued field isomorphism from $LM$ to $L'M$ which extends the identity on $M$. An analysis of the construction shows that this is also a $v$-valued field isomorphism. Finally we use the separated basis hypothesis to show that $\sigma$ is also an isomorphism on $\RV_{LM}$.

The first statement to be proved can be rephrased as saying 
$\tp_{\Ltilde}(L/Ck_M\Gamma_L) \vdash \tp_{\Ltilde}(L/M)$. 
To prove this, we claim that it suffices to prove 
$\tp_{\Ltilde}(L/Ck_M\Gamma_L \Gamma_M) \vdash \tp_{\Ltilde}(L/M)$. 
For, by Lemma~\ref{Remark 3.7 but no e of i}, with $Ck_M$ replacing $C$, and $\Gamma$ replacing $\calS$, we know that 
$\tp_{\Ltilde}(L/Ck_M\Gamma(k_M L)) \vdash \tp_{\Ltilde}(L/Ck_M\Gamma(k_M L) \Gamma(M))$. 
Thus, we just need to verify that $\Gamma(k_M L) = \Gamma(L)=\Gamma_L$. This follows by orthogonality of the value group and residue field. Thus we may assume that $\sigma$ fixes $\Gamma_M$ as well.

Choose $a_1,\ldots, a_r$ from $L$ and $e_1,\ldots,e_r$ from $M$ such that, for each $1\le i\le r$, $v(a_i)=v(e_i)$
and $\{ v(a_i)\}$ forms a $\bQ$-basis for $\Gamma_L$ modulo $\Gamma_C$. Choose $b_1,\ldots,b_s$ from
$L$ such that $\{ \res(b_1),\ldots,\res(b_s)\}$ is a transcendence basis for $k_L$ over $k_C$. By Definition~\ref{RV independence},
the elements
\[
	\res(a_1/e_1),\ldots,\res(a_r/e_r),\res(b_1),\ldots,\res(b_s)
\]
are algebraically independent over $k_M$. For $0\le j \le r$, let
\[
	R^{(j)} = \acl(k_M ,\res(a_1/e_1),\ldots,\res(a_j/e_j),\res(b_1),\ldots,\res(b_s))\cap k_{LM}.
\]
In particular, 
\begin{align*}
	R^{(0)} & = \acl(k_M ,\res(b_1),\ldots,\res(b_s))\cap k_{LM} = \acl(k_M ,k_L )\cap k_{LM} \quad \text{and} \\
	R^{(r)} & = \acl(k_M ,\res(a_1/e_1),\ldots,\res(a_r/e_r), k_L )\cap k_{LM} .
\end{align*}
For each $0\le j\le r-1$, choose a place $p^{(j)}: R^{(j+1)} \to R^{(j)}$ fixing $R^{(j)}$ and such that 
$p^{(j)}(\res(a_{j+1}/e_{j+1})) =0$, which is possible by the algebraic independence of $\res(a_1 /e_1), \dots ,\res(a_r /e_r )$ over 
$k_M$. Also choose a place $p^*:k_{LM} \to R^{(r)}$ fixing $R^{(r)}$. (Later we will show that $k_{LM}=R^{(r)}$ and thus $p^*$ will be seen to be the identity.)
Write $p_v:LM\to k_{LM}$ for the place corresponding to our given valuation $v$. Define
$p_{v'}: LM\to R^{(0)}$ to be the composition
\[
	p_{v'} = p^{(0)}\circ \cdots \circ p^{(r-1)}\circ p^*\circ p_{v} .
\]
Let $v'$ be a valuation associated to the place $p_{v'}$. 
Notice that all the places $p^{(j)}$ and  $p^*$ are the identity on $k_M$, so we may identify 
$(M,v)$ and $(M,v')$, including identifying the value groups $\Gamma_M$ and $\Gamma_{(M,v')}$.
Similarly, the places are all the identity on $k_L$, so the value groups $\Gamma_L$ and $\Gamma_{(L,v')}$
are isomorphic, but we shall see that we cannot simultaneously identify $\Gamma_M$ with $\Gamma_{(M,v')}$ and
$\Gamma_L$ with $\Gamma_{(L,v')}$. 

We now have two valuations $v$ and $v'$ on $LM$. If $x\in M\subseteq LM$, then $v(x)=v'(x)$, and if
$x,y\in L\subseteq LM$ then $v(x) \le v(y)$ implies $v'(x)\le v'(y)$. Furthermore, the construction has
ensured that for any $x\in M$ with $v(x)>0$, and any $w$ such that $\res(w)$ is a nonzero element of $k_{LM}$ mapped to zero by $p^*$,
\[
	0< v'(a_1/e_1) \ll \cdots \ll v'(a_r/e_r) \ll v(w) \ll v'(x),
\]
where $\gamma \ll \delta$ means that $n\gamma < \delta$ for any $n\in\mathbb{N}$ (and hence $\Gamma_{(L,v')}\not=\Gamma_L$). Let $\Delta$ be the 
subgroup of $\Gamma_{(LM,v')}$ generated by $v'(a_1/e_1),\ldots,v'(a_r/e_r)$ together with $v(w)$ for all such $w$. Then $\Delta$
is a convex subgroup of $\Gamma_{(LM,v')}$ and $\Gamma_{(LM,v')} = \Delta\oplus\Gamma_{LM}$, where the right-hand group is ordered
lexicographically. (See, e.g., Theorem 15, Theorem 17, and the associated discussion in Chapter VI of \cite{ZS}). 

To see that $\Gamma_{(L,v')}\cap \Gamma_{(M,v')} = \Gamma_{(C,v')}$, 
let $m\in M$ and $\ell\in L$ be such that $v'(m)=v'(\ell)$.  Set $v'(\frac{a_i}{e_i})=\delta_i$ and $v'(e_i )=\epsilon_i$.  As $(v(a_i))$ generates
$\Gamma_L$ over $\Gamma_C$, and $\Gamma_L$ and $\Gamma_{(L,v')}$ are isomorphic, 
$$v'(\ell)=\sum_{i=1}^{r}p_i v'(a_i ) + \gamma = 
	\sum_{i=1}^{r} p_i \delta_i + \sum_{i=1}^{r} p_i \epsilon_i + \gamma ,
$$ 
where $p_i \in \mathbb{Q}$ and $\gamma \in \Gamma_C$. The set 
$$\{ \delta_1 , \dots ,\delta_r , \epsilon_1 ,\dots ,\epsilon_r \}
$$ 
is algebraically independent over $\Gamma_C$ since $\Gamma_{(LM,v')}=\Delta \oplus \Gamma_{LM}$.  Next, note that 
since $v'(e_i) = v(e_i )$, $\{ v'(e_i ) \}$ forms a $\mathbb{Q}$-basis of $\Gamma_L \subseteq \Gamma_M = \Gamma_{(M,v')}$ 
over $\Gamma_C$.  Let $\mu_1 , \dots ,\mu_t$ be such that $\{ \epsilon_i \} \cup \{ \mu_j \}$ forms a $\mathbb{Q}$-basis of 
$\Gamma_M$ over $\Gamma_C$.  Then 
$$ v'(m)  = \sum_{i=1}^{r}p'_i \epsilon_i + \sum_{i=1}^{t}q_i \mu_i + \gamma',$$ 
where $q_i \in \mathbb{Q}$ and $\gamma' \in \Gamma_C$.  It follows that each $p_i =p'_i =0$ and each $q_i = 0$, 
hence $v'(\ell)=v'(m) \in \Gamma_C$.

Next we must check that $k_{(L,v')}$ and $k_{(M,v')}$ are linearly disjoint. Our definition of $\RV$-independence implies that $k_L$ and $k_M$ are independent over $k_C$, and using that $k_L$ is a regular extension of $k_C$ and Fact~\ref{Lang} we obtain that $k_L$ and $k_M$ are linearly disjoint over $k_C$.  As already observed, the place 
$p^{(0)}\circ \cdots \circ p^{(r-1)}\circ p^*:k_{LM}\to \acl(k_M,k_L)\cap k_{LM}$ is the identity on $k_M$ and 
$k_L$.  Thus this place is also the identity on their compositum, and $k_L k_M=k_{(L,v')}k_{(M,v')}$.  Thus $k_L$ and $k_M$ being linearly disjoint over $k_C$ implies linear disjointness of 
$k_{(L,v')}$ and $k_{(M,v')}$ over $k_{(C,v')}$.

Hence we can apply Corollary~\ref{revisedbasiclemma} to deduce that the isomorphism $\sigma |_{L}$ extends 
to a valued field isomorphism from $({LM},v')$ to $({L'M},v')$ which is the identity on $M$. As $v'$ is a refinement of
$v$, $\sigma$ is also an isomorphism of $({LM},v)$. 

Moreover, by Proposition~\ref{linearlydisjoint}, we know that $\Gamma_{(LM,v')}$ is the sum of $\Gamma_{(L,v')}$ and $\Gamma_{(M,v')}$, and $k_{(LM,v')}=k_{(L,v')}k_{(M,v')}$.  Since $\Gamma_{(LM,v')}$ is also $\Delta\oplus\Gamma_{LM}$, we see both that $\Delta$ must be generated by $\delta_1, \dots, \delta_r$ and that $\Gamma_{LM}=\Gamma_M$.  Since $\Delta$ is generated by $\delta_1, \dots, \delta_r$, in particular this means that there is no $w$ so that $\res(w)$ is a nonzero element mapped to zero by $p^*$.  This implies that $p^*$ is the identity, and that $k_{LM}=\acl(k_M ,\res(a_1/e_1),\ldots,\res(a_r/e_r), k_L )\cap k_{LM}$.

It remains to show that, if $\sigma$ is the identity on $\RV_L$, then it is also the identity on  $\RV_{LM}$. Take an element of $LM$, say $\frac{\Sigma_{i<n}\ell_i m_i}{\Sigma_{j<n}\ell_j m'_i}$.
By the hypothesis, we may assume that the $\{\ell_i\}$ forms a good separated basis over $C$ with respect to $v$ for the subspace it generates, and also with respect to $v'$, since $(L,v)$ and $(L,v')$ are isomorphic. By Lemma~\ref{separatedbasislifts}, this basis is still separated over $M$ with respect to $v'$. Hence, it is even separated over $M$ with respect to $v$, as the following calculation shows:
\[
v(\Sigma_{i<n} m_i\ell_i) = v'(\Sigma_{i<n} m_i\ell_i)/\Delta = (\min_{i<n}\{v'(m_i\ell_i)\})/\Delta = \min_{i<n}\{v'(m_i\ell_i)/\Delta\} = \min_{i<n}\{v(m_i\ell_i)\}.
\]
Since the basis is separated,  we can calculate the $\rv$ of an element of $\RV_{LM}$ as below.
Let $I$ be the set of indices when $v(m_i\ell_i)$ attains its minimum.  Then
\[
\rv(\Sigma_{i<n}m_i\ell_i)=\rv(\Sigma_{i\in I}m_i\ell_i)=\Sigma_{i\in I}\rv(m_i\ell_i)=\Sigma_{i\in I}\rv(m_i)\rv(\ell_i).
\]
Since $\sigma$ fixes $\RV_L$ and $M$, we see that $\sigma$ fixes $\rv$ of any element of the form $\Sigma_{i<n}m_i\ell_i$.  Hence $\sigma$ fixes $\rv$ of any element which is a quotient of such elements, i.e. any element of $LM$.
\end{proof}

\section*{Acknowledgements}

We thank Mariana Vicaria for comments on previous versions of this paper and in particular for pointing out a mistake in the statement of Theorem \ref{dominationovervaluegroup}. We also thank the anonymous referee for very carefully reading the paper and compiling an exhaustive list of comments which resulted in great improvements to the exposition.

\bibliography{Residue_dom}

\end{document}